\definecolor{codegreen}{rgb}{0,0.6,0}
\definecolor{codegray}{rgb}{0.5,0.5,0.5}
\definecolor{codepurple}{rgb}{0.58,0,0.82}
\definecolor{backcolour}{rgb}{0.95,0.95,0.92}
\lstdefinestyle{mystyle}{
    backgroundcolor=\color{backcolour},   
    commentstyle=\color{codegreen},
    keywordstyle=\color{magenta},
    numberstyle=\tiny\color{codegray},
    stringstyle=\color{codepurple},
    basicstyle=\ttfamily\footnotesize,
    breakatwhitespace=false,         
    breaklines=true,                 
    captionpos=b,                    
    keepspaces=true,                 
    numbers=left,                    
    numbersep=5pt,                  
    showspaces=false,                
    showstringspaces=false,
    showtabs=false,                  
    tabsize=2
}
\newcounter{todocounter}
\DeclareDocumentCommand\addreference{g}{\stepcounter{todocounter}\todo[color = blue!30]{\thetodocounter. Add reference\IfNoValueF{#1}{: #1}}\xspace}
\DeclareDocumentCommand\checkthis{g}{\stepcounter{todocounter}\todo[color = red!50]{\thetodocounter. Check this\IfNoValueF{#1}{: #1}}\xspace}
\DeclareDocumentCommand\fixthis{g}{\stepcounter{todocounter}\todo[color = orange!50]{\thetodocounter. Fix this\IfNoValueF{#1}{: #1}}\xspace}
\DeclareDocumentCommand\expand{g}{\stepcounter{todocounter}\todo[color = green!50]{\thetodocounter. Expand\IfNoValueF{#1}{: #1}}\xspace}
\DeclareDocumentCommand\commentthis{g}{\todo[color = magenta!50]{Comment:  #1}\xspace}
\numberwithin{equation}{section}
\theoremstyle{plain}
\newtheorem{theorem}{Theorem}[section]
\newtheorem{corollary}[theorem]{Corollary}
\newtheorem{lemma}[theorem]{Lemma}
\newtheorem{proposition}[theorem]{Proposition}
\newtheorem{conjecture}[theorem]{Conjecture}
\theoremstyle{definition}
\newtheorem{definition}[theorem]{Definition}
\newtheorem{remark}[theorem]{Remark}
\declaretheoremstyle[
  spaceabove = 3pt,
  spacebelow = 3pt,
  bodyfont=\normalfont\itshape,
]{alpha}
\def\l@subsection{\@tocline{2}{0pt}{3pc}{6pc}{}} 
\renewcommand{\paragraph}{%
  \@startsection{paragraph}{4}%
  {\z@}{1.5ex \@plus 1ex \@minus .2ex}{-1em}%
  {\normalfont\normalsize\it}%
}
   \def\MR#1{}
\newcommand{\cA}{\mathscr{A}}
\newcommand{\cB}{\mathscr{B}}
\newcommand{\cC}{\mathscr{C}}
\newcommand{\cE}{\mathscr{E}}
\newcommand{\cI}{\mathcal{I}}
\newcommand{\cL}{\mathcal{L}}
\newcommand{\cO}{\mathcal{O}}
\newcommand{\cT}{\mathcal{T}}
\newcommand{\bC}{\mathbf{C}}
\newcommand{\bP}{\mathbf{P}}
\newcommand{\bF}{\mathbf{F}}
\newcommand{\bZ}{\mathbf{Z}}
\newcommand{\rK}{\mathrm{K}}
\DeclareMathOperator{\Aut}{{Aut}}
\DeclareMathOperator{\Bl}{Bl}
\DeclareMathOperator{\Ext}{Ext}
\DeclareMathOperator{\ext}{ext}
\DeclareMathOperator{\height}{h}
\DeclareMathOperator{\HH}{HH}
\DeclareMathOperator{\Hom}{Hom}
\DeclareMathOperator{\Pic}{Pic}
\DeclareMathOperator{\psheight}{ph}
\DeclareMathOperator{\rheight}{e}
\DeclareMathOperator{\NHH}{NHH}
\DeclareMathOperator{\Db}{{D^b}}
\newcommand*{\SHom}{\mathscr{H}\kern -.5pt om}
\newcommand*{\SExt}{\mathscr{E}\kern -.5pt xt}
\title{A looming of phantoms} 
\author[]{Kimoi Kemboi}
\author[]{Daniel Krashen}
\author[]{Tianle Liu}
\author[]{Yeqin Liu}
\author[]{Eoin Mackall}
\author[]{Svetlana Makarova}
\author[]{Alexander Perry}
\author[]{Antonios-Alexandros Robotis}
\author[]{Sridhar Venkatesh}
\keywords{}
\subjclass{}
\thanks{}
\date{\today}
\begin{document}

\begin{abstract}
    Following Krah's method, we construct new examples of phantom categories as semiorthogonal components of the derived categories of two types of rational surfaces: the blowup of the plane at 11 points in general position, and the blowup of the second Hirzebruch surface at 9 points in general position. We also pose conjectures about the existence of phantom subcategories in the derived categories of other rational surfaces, obtained as the blowups of the other Hirzebruch surfaces. 
\end{abstract}

\maketitle

\tableofcontents

\section{Introduction}

We work over the complex numbers. 
A \emph{phantom} on a smooth projective variety $X$ is a nonzero admissible subcategory $\cC \subset \Db(X)$ of the bounded derived category of coherent sheaves whose Grothendieck group $\mathrm{K}_0(\cC)$ is zero. 
The existence of such categories came as a surprise when the first examples were constructed by Gorchinskiy and Orlov \cite{phantoms-orlov} and by B\"{o}hning, 
Graf von Bothmer, Katzarkov, and Sosna \cite{phantoms-bohning}. The phantoms in \cite{phantoms-bohning} occur on certain general type surfaces which admit maximal length exceptional collections that are not full, while those in \cite{phantoms-orlov} occur on the product of two general type surfaces admitting such exceptional collections. 
The examples in \cite{phantoms-orlov} are in fact \emph{universal phantoms}, meaning that all of their additive invariants vanish, or in other words that their K-motive vanishes. 

Nonetheless, it was expected that on sufficiently simple varieties, such as those admitting a full exceptional collection, phantoms cannot exist \cite[Conjecture 1.10]{kuznetsov-sod-icm}. 
As evidence, Pirozhkov  \cite{phantoms-pirozhkov} proved that phantoms cannot occur on del Pezzo surfaces. 
However, in another twist in the story, Efimov \cite{efimov} proved that \emph{any} universal phantom can be realized on a variety admitting a full exceptional collection (in fact, on an iterated projective bundle).
More recently, Krah \cite{krah} constructed a striking example of a universal phantom on a particularly simple variety: the blowup of $\bP^2$ at $10$ points in general position.  

The emerging picture is that phantom categories are ubiquitous. 
This paper gives further evidence for this viewpoint by constructing more phantoms on rational surfaces following Krah's method. Our first main result handles the blowup of $\bP^2$ at one extra point.  

\begin{theorem}\label{theorem:phantom-11-points}
    Let $X$ be the blowup of $\bP^2$ at $11$ points in general position. 
    Let $H$ be the pullback to $X$ of the hyperplane class on $\bP^2$, let $E_i$ be the exceptional divisors on $X$ for $1\leq i \leq 11$, and set 
    \begin{equation}
    \label{Di-F}
    D_i \coloneqq -3H + \sum_{j=1}^{11} E_j - E_i \quad \text{and} \quad F \coloneqq -10H + 3\sum_{j=1}^{11} E_j.
    \end{equation}
    Then there is a semiorthogonal decomposition 
    \begin{equation}
    \label{sod-10-points}
        \Db(X) = 
        \langle \cC, \cO_X, \cO_X(D_1),\dots, \cO_X(D_{11}),\cO_X(F),\cO_X(2F) \rangle,
    \end{equation}
    where each line bundle is exceptional and the category $\cC$ is a universal phantom. 
\end{theorem}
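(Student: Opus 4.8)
The plan is to check that the fourteen line bundles listed in \eqref{sod-10-points} form an exceptional collection, to \emph{define} $\cC$ as its left orthogonal
\[
\cC \coloneqq {}^{\perp}\big\langle \cO_X, \cO_X(D_1),\dots,\cO_X(D_{11}),\cO_X(F),\cO_X(2F)\big\rangle,
\]
so that the semiorthogonal decomposition holds by construction, and then to verify that $\cC$ is a nonzero universal phantom. Exceptionality of each line bundle is automatic: $X$ is rational, so $H^{>0}(X,\cO_X)=0$ and hence $\operatorname{RHom}(\cO_X(D),\cO_X(D))=H^{\bullet}(X,\cO_X)=\CC$ for any divisor $D$. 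So the real content splits into four claims: (i) semiorthogonality of the collection; (ii) $\mathrm{K}_0(\cC)=0$; (iii) $\cC\neq 0$; and (iv) the noncommutative (K-)motive of $\cC$ vanishes.

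For (i) I would translate each required vanishing $\operatorname{RHom}(\cO_X(B),\cO_X(A))=0$, with $\cO_X(B)$ to the right of $\cO_X(A)$ in the collection, into the acyclicity $H^{\bullet}(X,\cO_X(A-B))=0$, and run over the finite list of backward differences: $-D_i$, $D_j-D_i=E_i-E_j$ for $i>j$, $-F$, $D_i-F$, $-2F$, and $D_i-2F$. A pleasant uniformity makes this tractable: in every case the class $L\coloneqq A-B$ satisfies $L\cdot(L-K_X)=-2$, so Riemann--Roch gives $\chi(\cO_X(L))=1+\tfrac12 L\cdot(L-K_X)=0$; moreover $H^2(X,\cO_X(L))=H^0(X,\cO_X(K_X-L))^{\vee}=0$ since $K_X-L$ has negative degree against $H$. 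It then remains to show $H^0(X,\cO_X(L))=0$, which forces $H^1=0$ as well. For the classes of positive $H$-degree this identifies $H^0$ with a linear system of plane curves through the eleven points with prescribed multiplicities, and one checks that each such system has expected dimension exactly $0$; the remaining cases (such as $E_i-E_j$) are immediate. This $H^0$-vanishing is the only nonformal input and is where the general-position hypothesis is used: one must know these particular linear systems are nonspecial, exactly as in Krah's treatment of ten points, and this is the computational crux of the argument.

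Claim (ii) is then essentially free. For a rational surface $\mathrm{K}_0(X)$ is free, and for the eleven-point blowup it has rank $14$; since the decomposition splits $\mathrm{K}_0$, we get $\mathrm{K}_0(X)\cong \mathrm{K}_0(\cC)\oplus\ZZ^{14}$, where the $\ZZ^{14}$ is spanned by the classes of the exceptional line bundles. Comparing ranks shows $\mathrm{K}_0(\cC)$ is torsion, and torsion-freeness of $\mathrm{K}_0(X)$ then forces $\mathrm{K}_0(\cC)=0$. The hard part is (iii): showing $\cC\neq 0$, i.e.\ that this numerically full exceptional collection fails to be full. I expect this to be the main obstacle. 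No additive invariant can detect it --- indeed $\HH_{\bullet}(X)=\CC^{14}$ is concentrated in degree $0$ and is already accounted for by the fourteen line bundles --- so the argument must be genuinely non-additive, and this is the heart of Krah's method that has to be reproduced for the present configuration. I would follow Krah in exhibiting an explicit nonzero object of $\cC$, necessarily a numerically trivial complex, or equivalently in proving directly that the triangulated subcategory generated by the line bundles is proper.

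Finally, granting (iii), universality (iv) is comparatively formal. Since $X$ is an iterated blowup of $\bP^2$, Orlov's blowup formula furnishes a full exceptional collection on $X$ (of length $14$, though not by these line bundles), so the noncommutative motive satisfies $\mathcal{U}(\Db(X))\cong\mathcal{U}(\mathrm{pt})^{\oplus 14}$; the decomposition then realizes $\mathcal{U}(\cC)$ as a direct summand of $\mathcal{U}(\mathrm{pt})^{\oplus 14}$. Because retracts of a sum of copies of the point-motive are governed by the endomorphisms of $\mathcal{U}(\mathrm{pt})$ --- controlled by $K$-theory, with the rank of a retract read off from $\mathrm{K}_0$ --- the vanishing $\mathrm{K}_0(\cC)=0$ from (ii) forces $\mathcal{U}(\cC)=0$. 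Hence every additive invariant of $\cC$ vanishes and $\cC$ is a universal phantom.
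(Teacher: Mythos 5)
Your overall skeleton --- reflect the standard collection, check exceptionality via Riemann--Roch plus $H^0$/$H^2$ vanishing, define $\cC$ as the orthogonal, deduce $\rK_0(\cC)=0$ by rank counting, and get universality from the K-motive being a retract of $\mathcal{U}(\mathrm{pt})^{\oplus 14}$ killed by $\rK_0(\cC)=0$ --- matches the paper's Steps 1--3, and your parts (i), (ii), (iv) are essentially the paper's \cref{lemma:exc-collection-11-pts}, the additivity of $\rK_0$, and \cref{L:universal_phantoms} (the paper runs (iv) through $\rK_0(\cC\boxtimes\Db(X))=0$ and \cref{T:crit_for_universal_phantoms}, but this is the same Gorchinskiy--Orlov mechanism you describe). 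In (i) you correctly isolate the nonformal input, the nonspeciality of the linear systems $|-F|$, $|-2F|$, $|-D_i|$, $|D_i-F|$, $|D_i-2F|$; the paper discharges this via the standard-form criterion and the low-multiplicity case of SHGH (\cref{Cor:standardformcohomology}), which is available here because all multiplicities are at most $6\le 11$.

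The genuine gap is your step (iii), which you yourself flag as ``the main obstacle'' and then do not carry out. Your proposed route --- ``exhibiting an explicit nonzero object of $\cC$'' --- is not Krah's method (that is Mattoo's later approach, cf.\ \cref{remark-mattoo}), and no such object is produced or even sketched; constructing one is hard precisely because any object of $\cC$ is numerically trivial. The paper instead proves $\cC\neq 0$ by Kuznetsov's height theory: \cref{lemma:exts-between-elts-of-exc-coll} shows that the $\Ext$'s between any two \emph{distinct} members of the collection are concentrated in degree $2$, i.e.\ all forward relative heights $\rheight$ are $\geq 2$ (this needs \emph{additional} SHGH-type vanishings beyond your list in (i), namely $\ext^1=0$ for the backward pairs, verified by showing $\ext^2=\chi$ via \cref{Cor:standardformcohomology}); then \cref{lemma:anticanonical-pseudoheight-3} computes the anticanonical pseudoheight to be exactly $1$, the key input being $\rheight(\cO_X,\cO_X(-K_X))=1$, which follows from $h^0(-K_X)=0$ (again SHGH, as $-K_X=3H-\sum_j E_j$ is in standard form) together with $\chi(-K_X)=-1$. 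Since $1>-\dim X$, Kuznetsov's criterion (\cref{lemma:heights-of-exc-coll}) gives non-fullness, hence $\cC\neq 0$. Without this pseudoheight computation (or an honest construction of a nonzero object), your argument does not establish that $\cC$ is nonzero, and --- as you correctly observe --- no additive invariant can close this gap, so the proof as proposed is incomplete at its most essential point.
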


\begin{remark}
    \label{remark-comparing-phantoms-P2}
    There is another construction of a phantom in the setting of Theorem~\ref{theorem:phantom-11-points}. 
    Let $X'$ denote the blowup of $\bP^2$ at $10$ points in general position, and let $\cC' \subset \Db(X')$ be the phantom constructed by Krah. 
    Let $X$ be the blowup of $X'$ at a point $p$. 
    Then by the blowup formula for derived categories, $\Db(X)$ contains a copy of $\cC'$. 
    On the other hand, if $p \in X'$ is generic, then Theorem~\ref{theorem:phantom-11-points} gives a phantom $\cC \subset \Db(X)$. 
    We compute the Hochschild cohomology group $\HH^2(\cC)$ and observe that it differs from $\HH^2(\cC')$ --- see Proposition~\ref{proposition-HH-11-points} and Remark~\ref{remark-compare-to-krah}. Hence, $\cC$ and $\cC'$ are not equivalent categories, and $\cC$ is a genuinely new example of a phantom. 
\end{remark}

Our second main result produces a phantom on an appropriate blowup of the second Hirzebruch surface $\bF_2 = \bP(\cO_{\bP^1} \oplus \cO_{\bP^1}(2))$. 

\begin{theorem}
    \label{theorem:phantom-f2-9-points}
    Let $X$ be the blowup of $\bF_2$ at $9$ points in general position. 
    Let $C$ and $F$ be the pullbacks to $X$ of the classes of the $-2$ curve and a fiber on $\bF_2$, let $E_i$ be the exceptional divisors on $X$ for $1 \leq i \leq 9$, and set 
    \begin{equation*}
    D_i \coloneqq -4 C - 8F  + 2 \sum_{j=1}^9 E_j  - E_i , 
    \quad 
    G \coloneqq -8C - 17 F + 4 \sum_{j=1}^9 E_j , 
    \quad 
    S \coloneqq -C. 
    \end{equation*}
    Then there is a semiorthogonal decomposition 
    \begin{equation}
        \label{sod-F2-9-points}
            \Db(X) = \langle  \cC, \mathcal{O}_X,\mathcal{O}_X(D_1),\ldots,\mathcal{O}_X(D_9),\mathcal{O}_X(G),\mathcal{O}_X(S+2G),\mathcal{O}_X(S+3G)  \rangle,
    \end{equation} 
    where each line bundle is exceptional and the category $\cC$ is a universal phantom. 
\end{theorem}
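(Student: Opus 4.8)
The plan is to realize the phantom exactly as in Krah's construction: exhibit the thirteen displayed line bundles as an exceptional collection whose classes form a basis of $\mathrm{K}_0(X)$, take $\cC$ to be the orthogonal complement of the subcategory they generate, and then verify separately that $\cC$ is a universal phantom and that it is nonzero. To begin I would set up the intersection theory on $X$. With $C, F, E_1, \dots, E_9$ as in the statement we have $C^2 = -2$, $F^2 = 0$, $C \cdot F = 1$, $E_i \cdot E_j = -\delta_{ij}$ and $E_i \cdot C = E_i \cdot F = 0$; since $K_{\bF_2} = -2C - 4F$, the canonical class of $X$ is $K_X = -2C - 4F + \sum_{i=1}^{9} E_i$. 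These data feed Riemann--Roch and Serre duality for all the cohomology computations below.

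The technical core is to prove that the collection in \eqref{sod-F2-9-points} is exceptional. Each line bundle is exceptional because $X$ is rational, so $H^\bullet(X, \cO_X) = \CC$. Semiorthogonality amounts to the vanishing $\Hom^\bullet(L', L) = H^\bullet(X,\, L \otimes (L')^{-1}) = 0$ whenever $L$ precedes $L'$ in the list, i.e. the vanishing of $H^\bullet(X, \cO_X(D))$ for an explicit finite list of divisor classes $D$ arising as differences of the classes in \eqref{sod-F2-9-points}. For each such $D$ I would treat the three degrees separately: dispose of $H^2(X, \cO_X(D))$ via Serre duality as $H^0(X, \cO_X(K_X - D))^\vee$, reducing to non-effectivity of $K_X - D$; dispose of $H^0(X, \cO_X(D))$ by non-effectivity of $D$ itself; and handle the middle term $H^1(X, \cO_X(D))$ by pushing forward along the blowup $\pi \colon X \to \bF_2$ and reducing to a statement about a complete linear system on $\bF_2$ twisted by the ideal of the (fat) point scheme supported at the nine centers. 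I expect this $H^1$-vanishing to be the main obstacle: it is precisely here that the \emph{general position} hypothesis is indispensable, since one needs the nine points to impose independent conditions on all the relevant linear systems simultaneously so that no middle cohomology survives. A workable strategy is to peel off the exceptional twists $E_i$ one at a time, tracking sections and obstructions, or to degenerate the configuration to one where the cohomology can be computed by hand and then invoke semicontinuity.

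Granting exceptionality, the objects generate an admissible subcategory $\cA = \langle \cO_X, \cO_X(D_1), \dots, \cO_X(S+3G) \rangle$, and setting $\cC := \cA^{\perp}$ produces the decomposition \eqref{sod-F2-9-points}. A direct Euler-characteristic computation then shows that the thirteen classes form a $\ZZ$-basis of $\mathrm{K}_0(X) \cong \ZZ^{13}$ --- equivalently, that the change-of-basis matrix to the standard basis coming from the iterated blowup has determinant $\pm 1$ --- whence $\mathrm{K}_0(\cC) = 0$. To upgrade this to universality I would use that $X$, as an iterated blowup of $\bF_2$, carries a full exceptional collection of length $13$, so the universal additive invariant satisfies $U(\Db(X)) \simeq U(k)^{\oplus 13}$. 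The inclusion $\bigoplus_i U(L_i) \simeq U(k)^{\oplus 13} \hookrightarrow U(\Db(X))$ coming from $\cA$ is a direct summand with complement $U(\cC)$, and on $\mathrm{K}_0$ it is the basis change above, hence an isomorphism over $\ZZ$. Now $\mathrm{K}_0(\blank) = \pi_0 \Hom_{U}(U(k), U(\blank))$ and the endomorphism spectrum of $U(k)$ is the connective $\mathrm{K}$-theory of $\CC$, with $\pi_0 = \ZZ$; since invertibility of a self-map of $U(k)^{\oplus 13}$ over a connective ring spectrum is detected on $\pi_0$, the inclusion is already an equivalence and $U(\cC) \simeq 0$. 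Thus every additive invariant of $\cC$ vanishes.

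Finally I must show $\cC \neq 0$, the one feature invisible to additive invariants --- indeed the $\mathrm{K}$-motive, algebraic $\mathrm{K}$-theory, and Hochschild \emph{homology} of $\cC$ all vanish, the last because $\HH_\bullet(X)$ is concentrated in degree $0$ with total dimension $1 + 11 + 1 = 13$, matching the thirteen exceptional objects. No numerical test can detect non-fullness here, since the Euler form on $\mathrm{K}_0(X)$ already encodes Serre duality and the collection is numerically indistinguishable from a full one; likewise no explicit object of $\cC$ is accessible, since skyscrapers and ordinary sheaves always pair nontrivially with some $L_i$. I would instead compute the non-additive invariant $\HH^\bullet(\cC)$ through the gluing data of the decomposition, expressing it in terms of $\HH^\bullet(X)$ and the mutual $\Ext$-groups among the components; finding it nonzero, as in the parallel computation of Proposition~\ref{proposition-HH-11-points}, forces $\cC \neq 0$ and completes the proof that $\cC$ is a genuine universal phantom.
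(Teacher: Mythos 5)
Your proposal follows the same overall architecture as the paper's proof --- exceptional collection of maximal length, orthogonal complement $\cC$, vanishing of additive invariants, non-vanishing via Hochschild-cohomological data --- but it misses the one structural observation that makes the whole computation tractable. The collection in the statement is not an ad hoc list: it is the image of the standard full exceptional collection \eqref{eq:f2except} under the lattice involution $\iota(D) = -D - 2(D.K_X)K_X$ of $\Pic(X)$, which is well defined because $K_X^2 = -1$ and is an isometry fixing $K_X$. By Riemann--Roch, $\chi(\cO_X(\iota(D))) = \chi(\cO_X(D))$, so the reflected collection \eqref{E:ECF2at9points} is \emph{automatically numerically exceptional}. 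Since $\chi = \hom - \ext^1 + \ext^2$, once one proves the $\Hom$- and $\Ext^2$-vanishings (both of which are $h^0$'s, the latter via Serre duality), the vanishing $\ext^1 = 0$ is forced by $\chi = 0$. Consequently the step you single out as ``the main obstacle'' --- proving $H^1$-vanishing directly, by peeling off exceptional divisors or by degeneration --- never arises in the paper: every vanishing it needs is an $h^0$, established either by pairing against the nef divisor $C+3F$, by the ideal sequence of $E_j$ (for $E_i - E_j$), or by Macaulay2 computations over $\FF_{997}$ transferred to $\CC$ by semicontinuity (Table~\ref{table: f2}, Appendices~\ref{sec: semic} and~\ref{app: mac}). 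Your direct-$H^1$ route is not wrong in principle, but it is substantially heavier and entirely avoidable.

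Your universality argument is genuinely different from the paper's and is correct: realizing $U(\cC)$ as the complement of a map $U(k)^{\oplus 13} \to U(k)^{\oplus 13}$ of motives that is an equivalence because invertibility is detected on $\pi_0 = \rK_0(\CC) = \ZZ$ does the job (and the basis claim you need is automatic, since a maximal-length exceptional collection spans a unimodular sublattice of the unimodular lattice $\rK_0(X)$). The paper's route (Lemma~\ref{L:universal_phantoms}) is more elementary: Kuznetsov's base-change result gives a semiorthogonal decomposition of $\Db(X \times X)$ in which $\cC \boxtimes \Db(X)$ is the orthogonal of an exceptional collection of length $13^2$, whence $\rK_0(\cC \boxtimes \Db(X)) = 0$, which is equivalent to universality by the Gorchinskiy--Orlov criterion (Theorem~\ref{T:crit_for_universal_phantoms}). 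Finally, your non-vanishing step is in substance the paper's: ``expressing $\HH^\bullet(\cC)$ via the gluing data and the mutual $\Ext$-groups'' is precisely Kuznetsov's normal Hochschild cohomology; the paper verifies the \emph{forward} $\Hom$-vanishings (a further list of $h^0$'s, also in Table~\ref{table: f2}), deduces that the anticanonical pseudoheight is positive, and concludes non-fullness from Lemma~\ref{lemma:heights-of-exc-coll}. Be aware that this step requires its own cohomology vanishings beyond those needed for exceptionality --- your sketch leaves them implicit.
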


\begin{remark}
    \label{remark-comparison-to-krah-F2-intro}
    Similarly to Remark~\ref{remark-comparing-phantoms-P2}, we check via a Hochschild cohomology computation that the phantom from Theorem~\ref{theorem:phantom-f2-9-points} is a genuinely new example, distinct from the Krah's phantom and the one from Theorem~\ref{theorem:phantom-11-points} --- see Proposition~\ref{proposition-HH-11-points} and Remark~\ref{remark-compare-to-krah-F2}. 
\end{remark}

Our third main result, Theorem \ref{theorem:alternatereflection}, constructs a phantom on $\bP^2$ blown up in 10 general points using a different reflection from the one used by Krah. At present, we cannot determine whether or not it agrees with the one constructed by Krah. However, the merit of this result is that it suggests a more general technique for constructing phantoms on rational surfaces, beyond the cases treated by Krah and the first two theorems discussed above.

Finally, in view of Krah's results \cite{krah}
and Theorem \ref{theorem:phantom-f2-9-points}, we formulate a conjecture about the existence of phantom categories on blowups of Hirzebruch surfaces of higher degree. Since $\Bl_1(\bP^2) \cong \bF_1$ and $\Bl_2(\bP^2) \cong \Bl_1(\bF_0)$, Krah's phantom on the blowup of $\bP^2$ in $10$ points shows that $\bF_0$ and $\bF_1$ blown up in $9$ generic points admit phantom categories. In spite of this, results in the literature \cite{BorisovKimoi,phantoms-pirozhkov} suggest that the number of points to be blown up on $\bF_n$ in order for it to support a phantom should depend on $n$; indeed, the number of points in general position needed should equal $h^0(\bF_n,\omega_{\bF_n}^\vee) = 6 + \max\{3,n\}$. Thus, the number of points one expects to blow up to find a phantom as a function of $n$ is $9,9,9,9,10,11,\ldots$. We formulate this more precisely below as Conjecture \ref{Conj:blowupnphantom}.

\subsection{Outline of the construction} 
Let us briefly outline the procedure we use to construct phantoms in the present work, which is based on Krah's technique \cite{krah}. 
We start with a rational surface $X$ admitting a full exceptional exceptional collection of line bundles
\begin{equation}
\label{eq: original collection}
    \Db(X) = \langle \cO(L_1),\dots,\cO(L_d) \rangle.
\end{equation}

\paragraph{Step 1}
Pick an involution $\iota$ of $\Pic (X)$ that preserves the intersection pairing and the canonical divisor $K_X$. This is the key initial input needed to implement Krah's strategy --- in his case, the involution is the negative of reflection along the canonical divisor.

Let $L_i'\coloneqq\iota(L_i)$ denote the images of the divisors $L_i$ under the involution, and consider the new collection
\begin{equation}
\label{eq: reflected collection}
     \cO(L_1'),\dots,\cO(L_d') .
\end{equation}
As a consequence of Riemann--Roch, this new collection is automatically numerically exceptional.

\paragraph{Step 2}
Show that the new collection \eqref{eq: reflected collection} is an honest exceptional collection. 
By Step 1, this reduces to establishing the vanishing of global sections of the divisors $L'_j-L_i'$ and $L'_i-L_j'+K_X$ for $i>j$. To verify this in the situation of Theorem~\ref{theorem:phantom-11-points}, following Krah, we appeal to known cases of the so-called SHGH conjecture (reviewed in \S\ref{section-SHGH}). 
In the situations of \cref{theorem:phantom-f2-9-points} and \cref{theorem:alternatereflection}, we instead use a combination of elementary arguments and Macaulay2 computations, the latter of which are detailed in Appendix \ref{app: mac}. 

\paragraph{Step 3}
The phantom we seek is the orthogonal complement $\cC$ of the new exceptional collection \eqref{eq: reflected collection}. 
By construction $\rK_0(\cC) = 0$, so $\cC$ is a phantom as long as it is nonzero, which holds if and only if the collection~\eqref{eq: reflected collection} is not full. The failure of fullness of an exceptional collection can be detected using Kuznetsov's notion of pseudoheight \cite{kuznetsov-heights}: fullness fails if the pseudoheight is strictly positive (see \cref{lemma:heights-of-exc-coll}). To show that the pseudoheight of \eqref{eq: reflected collection} is positive boils down to showing the vanishing of global sections of the divisors $L'_j-L'_i$ for $i<j$, which we compute as in Step 2.

\begin{remark}
    \label{remark-mattoo}
    If $\cC$ is Krah's phantom, then in the recent paper \cite{mattoo}, 
    Mattoo constructs explicit nonzero objects in $\cC$ with nice properties. In particular, since his objects are nonzero, this gives another another proof of the nonvanishing of $\cC$. 
    We expect that similar methods could be used to produce interesting nonzero objects in the phantom categories constructed in this paper. 
    On the other hand, the approach to nonvanishing of phantoms via the computation of heights of exceptional collections and Hochschild cohomology has the advantage that it can also be used to distinguish the various phantom categories from each other (see Remarks~\ref{remark-comparing-phantoms-P2} and~\ref{remark-comparison-to-krah-F2-intro}). 
\end{remark}

After posting the first version of this article, we learned that Shihao Ma, Yirui Xiong, and Song Yang \cite{MXY-new-phantom}  independently obtained the same phantom as in Theorem \ref{theorem:phantom-11-points}. 

\subsection{Conventions}
We work over the complex numbers, with the exception of Appendices~\ref{sec: semic} and~\ref{app: mac} where we make computations over finite fields to prove vanishing of certain cohomology groups. 
Throughout, $\bF_n$ denotes the Hirzebruch surface of degree $n$, i.e. the projectivization of $\cO_{\bP^1}\oplus \cO_{\bP^1}(n)$. By contrast, $\mathbb{F}_q$ denotes the finite field of order $q$. Somewhat informally, we write $\Bl_i(X)$ to denote the blowup of a surface $X$ in $i$ suitably general points. For a variety $X$, $\Db(X)$ denotes the bounded derived category of coherent sheaves on $X$. We often abbreviate $\dim \Hom$ as $\hom$ and $\dim \Ext^i$ as $\ext^i$ for $i \in \mathbb{Z}$.

\subsection{Acknowledgements} 
The authors are grateful to Pieter Belmans, Shengxuan Liu, and Mahrud Sayrafi for helpful conversations about certain aspects of the paper.

The authors thank the American Mathematical Society for organizing the Mathematics Research Community 2023 “Derived Categories, Arithmetic and Geometry” where this work was initiated. This material is thus based upon work supported by the National Science Foundation under Grant Number DMS 1916439.

We would also like to thank Frederick Fung for assistance with computer calculations, and the National Computational Infrastructure of Australia. Additionally, this research used resources of the shared high performance computing facility at the University of California, Santa Cruz. Support for the UCSC Hummingbird HPC cluster is provided by the Information Technology Services Division and the Office of Research.

During the preparation of this paper, A.P. was partially supported by a Sloan Research Fellowship and NSF grants DMS-2052750 and DMS-2143271. A.R. was partially supported by NSF grant DMS-2503404. S.V. was partially supported by NSF grant DMS-2301463 and by the Simons Collaboration grant \textit{Moduli of Varieties}.

\section{Preliminaries}

\subsection{Phantoms and exceptional collections}

Let $X$ be a smooth projective variety.
We denote by $\Db(X)$ the bounded derived category of coherent sheaves on $X$.
A full triangulated subcategory $\cC \subset \Db(X)$ is \emph{admissible} if the inclusion functor $\cC \hookrightarrow \Db(X)$ admits both right and left adjoints. If $\cC$ is admissible, the natural map $\rK_0(\cC) \to \rK_0(\Db(X)) = \rK_0(X)$ is the inclusion of a direct summand. 

\begin{definition}
Let $\cC \subset \Db(X)$ be a nonzero admissible subcategory.
\begin{enumerate}
    \item If $\rK_0(\cC) =0$, then $\cC$ is said to be a \emph{phantom}.
    \item If for any smooth projective variety $Y$ we have $\rK_0(\cC \boxtimes \Db(Y)) =0$, then $\cC$ is said to be a \textit{universal phantom}. 
\end{enumerate}
Here, $\cC \boxtimes \Db(Y)$ is the smallest triangulated subcategory of $\Db(X\times Y)$ that is both closed under taking direct summands and contains the objects $p^\ast E \otimes q^\ast F$ for all $E \in \cC$, $F \in \Db(Y)$ where $p,q$ are the first and second projection, respectively. 
\end{definition}

In \cite[Definition 1.8]{phantoms-orlov}, phantoms are required to additionally have vanishing Hochschild homology. We drop this condition in our definition above because, since we are working over the field $\mathbb{C}$, it is implied by the vanishing of $\rK_0(\cC)$ (see \cite[Theorem 5.5]{phantoms-orlov}). The following gives a general criterion for an admissible subcategory to be a universal phantom.

\begin{theorem}[{\cite[Proposition 4.4]{phantoms-orlov}}] \label{T:crit_for_universal_phantoms}
    Let $X$ be a smooth projective variety over $\mathbb{C}$ and let $\cC \subset \Db(X)$ be a nonzero admissible subcategory.
    Then the following are equivalent:
    \begin{enumerate}[label =\emph{(\arabic*)}]
        \item $\cC$ is a universal phantom;
        \item $\rK_0(\cC \boxtimes \Db(X)) = 0$;
        \item the $\mathrm{K}$-motive of $\cC$ is zero. 
    \end{enumerate}
\end{theorem}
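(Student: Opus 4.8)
The plan is to work in the category of noncommutative (additive) motives and to prove the cyclic chain $(1)\Rightarrow(2)\Rightarrow(3)\Rightarrow(1)$. Let $U$ denote the universal additive invariant from smooth proper dg categories to the additive motive category $\mathrm{Hmo}_0$; it is symmetric monoidal, and the \emph{K-motive} of $\cC$ is the object $U(\cC)$. Two standard facts drive everything. First, for smooth proper $\cA,\cB$ one has $\Hom(U(\cA),U(\cB))\cong\rK_0(\cA\op\otimes\cB)$; in particular $\rK_0(\cA)\cong\Hom(U(k),U(\cA))$, so that $\rK_0$ is an additive functor of $U(\cA)$ and hence sends motivic direct summands to direct summands of abelian groups. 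Second, since $\cC\subset\Db(X)$ is admissible with $X$ smooth projective, $\cC$ is itself smooth and proper, so these formulas apply to $\cC$.

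The implication $(1)\Rightarrow(2)$ is immediate, being the case $Y=X$ of the definition of a universal phantom. For $(3)\Rightarrow(1)$, assume $U(\cC)=0$ and let $Y$ be any smooth projective variety. The definition of $\cC\boxtimes\Db(Y)$ as the idempotent-closed triangulated category generated by the objects $p^\ast E\otimes q^\ast F=E\boxtimes F$ identifies it with the Morita tensor product $\cC\otimes\Db(Y)$, using that external product induces an equivalence $\Db(X)\otimes\Db(Y)\xrightarrow{\sim}\Db(X\times Y)$ after idempotent completion. Monoidality of $U$ then gives $U(\cC\boxtimes\Db(Y))\cong U(\cC)\otimes U(\Db(Y))=0$, so $\rK_0(\cC\boxtimes\Db(Y))=0$; as $Y$ was arbitrary, $\cC$ is a universal phantom.

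The crux is $(2)\Rightarrow(3)$. First I would reduce vanishing of $U(\cC)$ to vanishing of its endomorphism ring: in any additive category an object vanishes exactly when its identity does, and the Hom-formula gives $\End(U(\cC))\cong\rK_0(\cC\op\otimes\cC)$. It thus suffices to realize $\rK_0(\cC\op\otimes\cC)$ as a direct summand of $\rK_0(\cC\boxtimes\Db(X))$, which is zero by (2). Here I use that $\Db(X)$ is smooth and proper and self-dual, $\Db(X)\op\simeq\Db(X)$ (Serre/Morita duality), so that $\cC\op$ corresponds to an admissible subcategory $\cC^\vee\subset\Db(X)$ and $U(\cC\op)$ is a direct summand of $U(\Db(X))$. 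Tensoring this splitting with $U(\cC)$ and using monoidality, $U(\cC\op\otimes\cC)\cong U(\cC\op)\otimes U(\cC)$ is a direct summand of $U(\Db(X))\otimes U(\cC)\cong U(\Db(X)\otimes\cC)\cong U(\cC\boxtimes\Db(X))$, the last equivalences by symmetry of $\otimes$ and the external-product identification. Applying the additive functor $\rK_0$ produces the desired splitting, so (2) forces $\rK_0(\cC\op\otimes\cC)=0$ and hence $U(\cC)=0$.

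The main obstacle is not a single hard estimate but the bookkeeping in the motivic formalism: I must ensure that (i) admissibility of $\cC$ in $\Db(X)$ yields a genuine motivic direct summand, via the fact that a semiorthogonal decomposition splits after any additive invariant; (ii) the external-product description of $\cC\boxtimes\Db(X)$ matches the Morita tensor product $\cC\otimes\Db(X)$ after idempotent completion; and (iii) the self-duality $\Db(X)\op\simeq\Db(X)$ and the Hom-formula genuinely apply to the smooth proper subcategory $\cC$. Each ingredient is standard in the work of Bondal--Van den Bergh, Tabuada, and Blumberg--Gepner--Tabuada, but arranging matters so that the summand produced in (iii) is exactly the factor of $\rK_0(\cC\boxtimes\Db(X))$ appearing in hypothesis (2) is where the care lies.
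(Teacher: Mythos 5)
Your proposal is correct, and it is worth noting at the outset that the paper itself offers no proof of this statement: it is imported with a citation to \cite[Proposition 4.4]{phantoms-orlov}, so the real comparison is with Gorchinskiy--Orlov's original argument. Your proof is essentially a faithful reconstruction of theirs, transported into Tabuada's formalism of noncommutative motives. Where Gorchinskiy--Orlov work in a category of $\rK$-correspondences whose morphism groups are \emph{by definition} $\rK_0(X \times Y)$ --- so that the self-duality of the motive of $X$ is built into the formalism and never needs to be invoked --- you make that self-duality explicit via the equivalence $\Db(X)\op \simeq \Db(X)$, $E \mapsto E^{\vee}$, and then detect vanishing of $U(\cC)$ through its endomorphism ring exactly as they detect vanishing of the $\rK$-motive through $\End$. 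The cyclic chain $(1)\Rightarrow(2)\Rightarrow(3)\Rightarrow(1)$ and the crux of $(2)\Rightarrow(3)$ --- realizing $\End(U(\cC)) \cong \rK_0(\cC\op \otimes \cC)$ as a direct summand of $\rK_0(\cC \boxtimes \Db(X))$, which vanishes by hypothesis --- is the same mechanism as in the original, merely phrased through Morita theory rather than correspondences. The three bookkeeping points you flag are each genuinely needed and each standard: smooth properness of admissible subcategories of $\Db(X)$ (Kuznetsov--Lunts) is what licenses the Hom-formula $\Hom(U(\cA),U(\cB)) \cong \rK_0(\cA\op \otimes \cB)$; the identification of $\cC \boxtimes \Db(Y)$ with the Morita tensor product is proved in Gorchinskiy--Orlov's paper itself and also follows from \cite[Proposition 5.1]{Kuznetsov-base-change}, which this paper already uses for Lemma~\ref{L:universal_phantoms}; and splitting of motives along semiorthogonal decompositions is precisely the additivity property characterizing $U$. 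So the proposal is sound; what the motivic-formalism packaging buys you is a cleaner separation of the formal steps (monoidality, additivity, duality), at the cost of having to certify that the geometric external product $\boxtimes$ used in the definition of universal phantom agrees with the abstract tensor product of dg categories --- the one point where a careless version of this argument could silently go wrong, and which you correctly single out.
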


Recall that a \emph{semiorthogonal decomposition} of $\Db(X)$ is a collection of full triangulated subcategories $\cA_i$, for $i=1,\ldots, n$, written as $\Db(X) = \langle \cA_1, \ldots, \cA_n \rangle$, 
such that:
\begin{enumerate}
    \item $\Hom_{\Db(X)}(A,B) =0$ for all objects $A \in \cA_i$ and $B \in \cA_j$ whenever $i > j$, and
    \item $\Db(X)$ is the smallest full triangulated subcategory containing $\cA_1,\ldots,\cA_n$.
\end{enumerate}
Since $X$ is smooth and projective, the semiorthogonal components $\cA_i \subset \Db(X)$ are all automatically admissible. 
Conversely, any admissible subcategory $\cA \subset \Db(X)$ determines a semiorthogonal decomposition 
\begin{equation*}
\Db(X) = \langle \cA^\perp ,\cA \rangle,
\end{equation*}
where $\cA^\perp\coloneqq\{ F\in \Db(X) \mid  \Hom(A,F) = 0\text{ for all } A\in \cA\}$ is the \textit{right orthogonal} of $\cA$. An important source of admissible subcategories come from exceptional objects. 

\begin{definition} 
Let $X$ be a smooth projective variety. 
\label{D:FEC}
\begin{enumerate} 
\item An object $E \in \Db(X)$ is \emph{exceptional} if 
\begin{equation*}
    \Ext^p(E,E) = \begin{cases} 
    \bC & \text{for } p = 0 , \\ 
    0 & \text{for } p \neq 0.
    \end{cases}
\end{equation*}

\item An \emph{exceptional collection} on $X$ is an ordered collection of exceptional objects $E_1, \ldots, E_n$ in $\Db(X)$ with the property that $\mathrm{RHom}(E_i,E_j) = 0$ for all $i > j$. 

\item An exceptional collection is \emph{full} if $\Db(X)$ is the smallest full triangulated subcategory containing the objects $E_1,\ldots,E_n$. 
\end{enumerate}
\end{definition}

Recall that the Euler characteristic defines a bilinear form on $\mathrm{K}_0(\Db(X)) = \mathrm{K}_0(X)$ by 
\[
    \chi(E,F) = \sum_{i \in \mathbb{Z}} (-1)^i \dim \Ext^i(E,F).
\]
We often use the following weakening of \Cref{D:FEC}.

\begin{definition}
\label{D:NFEC}
    Let $X$ be a smooth projective variety. 
    \begin{enumerate}
        \item An object $E$ of $\Db(X)$ is \emph{numerically exceptional} if $\chi(E,E) = 1$. 
        \item A collection of numerically exceptional objects $E_1,\ldots, E_n$ of $\Db(X)$ is said to be \emph{numerically exceptional} if $\chi(E_i,E_j) = 0$ for all $i > j$.
    \end{enumerate}
\end{definition}

The triangulated category $\langle E \rangle$ generated by an exceptional object inside of $\Db(X)$ is equivalent to $\Db(\bC)$. 
Moreover, any exceptional collection $E_1, \dots, E_n$ on $X$ generates an admissible subcategory $\langle E_1, \ldots, E_n \rangle \subset \Db(X)$, and thus determines 
a semiorthogonal decomposition 
\begin{equation*}
    \Db(X) = \langle \cC, E_1, \dots, E_n \rangle, 
\end{equation*}
where $\cC = \langle E_1, \ldots, E_n \rangle^\perp$ and for simplicity we write $E_i$ for the subcategory $\langle E_i \rangle$ it generates. 
If $\rK_0(X)$ is a free abelian group of rank $n$, then $\rK_0(\cC) = 0$ by the additivity of $\rK_0$ under semiorthogonal decompositions. 
Thus, in this situation, $\cC$ is a phantom as long as it is nonzero. 
This discussion can be amplified to a simple criterion for being a universal phantom. 

\begin{lemma} \label{L:universal_phantoms}
Let $X$ be a smooth projective variety over $\mathbb{C}$ that admits a full exceptional collection of length $n$. Let $\cC \subset \Db(X)$ be a nonzero admissible subcategory that fits into a semiorthogonal decomposition
\begin{equation}
    \Db(X) = \langle \cC, E_1, \ldots, E_n \rangle \label{eq:univ_phantom}
\end{equation}
with $E_1, \ldots, E_n$ an exceptional collection of the same length $n$. Then $\cC$ is a universal phantom.
\end{lemma}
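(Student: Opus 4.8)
The plan is to deduce the universal phantom property of $\cC$ from the general criterion in \Cref{T:crit_for_universal_phantoms}, namely by verifying condition (2): $\rK_0(\cC \boxtimes \Db(X)) = 0$. The essential idea is that a full exceptional collection of length $n$ on $X$ gives a basis for $\rK_0(X) \cong \ZZ^n$, so $\rK_0(X)$ is free of rank $n$, and the semiorthogonal decomposition \eqref{eq:univ_phantom} with $E_1,\dots,E_n$ another exceptional collection of the \emph{same} length $n$ forces $\rK_0(\cC)$ to vanish; the same mechanism, applied to the product $X \times X$, should upgrade this to vanishing of the relative $\rK_0$.

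First I would record that under the semiorthogonal decomposition \eqref{eq:univ_phantom} the Grothendieck group splits additively as
\begin{equation*}
    \rK_0(X) = \rK_0(\cC) \oplus \rK_0(E_1) \oplus \cdots \oplus \rK_0(E_n),
\end{equation*}
and that each $\rK_0(E_i) \cong \ZZ$ since $\langle E_i \rangle \simeq \Db(\bC)$. Since $X$ admits a full exceptional collection of length $n$, the group $\rK_0(X)$ is free abelian of rank exactly $n$. Hence the free abelian group $\rK_0(\cC) \oplus \ZZ^n$ has rank $n$, which forces $\rK_0(\cC)$ to be a finitely generated group of rank $0$; as a direct summand of the torsion-free group $\rK_0(X)$ it is itself torsion-free, so $\rK_0(\cC) = 0$ and $\cC$ is a phantom. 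This is the non-universal version.

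Next I would promote this to the universal statement by base-changing along $\Db(X)$. The key structural input is that semiorthogonal decompositions are preserved under the $\boxtimes$ operation: from \eqref{eq:univ_phantom} one obtains a semiorthogonal decomposition
\begin{equation*}
    \Db(X \times X) = \langle \cC \boxtimes \Db(X), E_1 \boxtimes \Db(X), \ldots, E_n \boxtimes \Db(X) \rangle,
\end{equation*}
where $E_i \boxtimes \Db(X) \simeq \Db(X)$ via the exceptional object $E_i$. Applying additivity of $\rK_0$ to this decomposition gives
\begin{equation*}
    \rK_0(X \times X) = \rK_0\!\left(\cC \boxtimes \Db(X)\right) \oplus \rK_0(X)^{\oplus n}.
\end{equation*}
Because $X$ has a full exceptional collection of length $n$, so does $X \times X$ (the external products form one, of length $n^2$), whence $\rK_0(X \times X)$ is free of rank $n^2$; meanwhile $\rK_0(X)^{\oplus n}$ is free of rank $n^2$. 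The same rank-and-torsion-free argument as before then yields $\rK_0(\cC \boxtimes \Db(X)) = 0$, which is exactly condition (2). By \Cref{T:crit_for_universal_phantoms}, $\cC$ is a universal phantom.

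The main obstacle I anticipate is the claim that $\boxtimes$ converts the semiorthogonal decomposition \eqref{eq:univ_phantom} into a genuine semiorthogonal decomposition of $\Db(X \times X)$ with the expected components, together with the identification $E_i \boxtimes \Db(X) \simeq \Db(X)$ and the compatibility of $\rK_0$ with these products. These facts are standard in the theory of semiorthogonal decompositions over a base, but they must be invoked carefully so that the additivity of $\rK_0$ applies; once they are in place the remaining rank bookkeeping is routine. An alternative, possibly cleaner route would bypass $X \times X$ entirely and instead argue directly that the $\rK$-motive of $\cC$ vanishes (condition (3)), since the motivic decomposition induced by \eqref{eq:univ_phantom} expresses the $\rK$-motive of $X$ as a sum of $n$ copies of the unit plus that of $\cC$, and fullness pins down the former to be exactly $n$ copies of the unit.
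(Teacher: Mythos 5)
Your proposal is correct and follows essentially the same route as the paper: reduce to condition (2) of \Cref{T:crit_for_universal_phantoms}, base-change the decomposition \eqref{eq:univ_phantom} along $\Db(X)$ to get a semiorthogonal decomposition of $\Db(X\times X)$ (the paper cites \cite[Proposition 5.1]{Kuznetsov-base-change} for exactly the external-product compatibility you flag as the key input), and then run the rank-plus-torsion-freeness argument on $\rK_0(X\times X) \cong \ZZ^{n^2}$. The only cosmetic difference is that the paper further refines each $E_i \boxtimes \Db(X)$ into the exceptional objects $E_i \boxtimes F_j$ so that $\cC \boxtimes \Db(X)$ is the right orthogonal of an exceptional collection of length $n^2$, whereas you identify $E_i \boxtimes \Db(X) \simeq \Db(X)$ directly; the $\rK_0$ bookkeeping is identical.
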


\begin{proof}
By Theorem~\ref{T:crit_for_universal_phantoms}, it suffices to show that $\rK_0(\cC \boxtimes \Db(X)) = 0$. 
Let $F_1, \dots, F_n$ be a full exceptional collection on $X$. It follows --- for instance from {\cite[Proposition 5.1]{Kuznetsov-base-change}} --- that $\Db(X \times X)$ admits a full exceptional collection of length $n^2$, consisting of the objects $F_i \boxtimes F_j$ for $1 \leq i, j \leq n$.  
Thus, $\rK_0(X \times X)$ is a free abelian group of rank $n^2$. 

On the other hand, {\cite[Proposition 5.1]{Kuznetsov-base-change}} also 
implies that \eqref{eq:univ_phantom} induces a semiorthogonal decomposition 
\[
    \Db(X\times X) = \langle \cC \boxtimes \Db(X), E_1 \boxtimes \Db(X), \ldots, E_n \boxtimes \Db(X) \rangle 
\]
and that for each $i = 1, \ldots, n$, we have a semiorthogonal decomposition 
\[
    E_i \boxtimes \Db(X) = \langle E_i \boxtimes F_1, \ldots, E_i \boxtimes F_n \rangle .
\]
It follows that $\cC \boxtimes \Db(X) \subset \Db(X \times X)$ is the right orthogonal to an exceptional collection of length $n^2$. 
As in the discussion preceding this lemma, we thus conclude $\rK_0(\cC \boxtimes \Db(X)) = 0$. 
\end{proof}

\subsection{Hochschild cohomology and heights}
Given a suitably enhanced triangulated category~$\cC$, 
its Hochschild cohomology can be defined as the derived endomorphisms of the identity functor $\mathrm{id}_{\cC}$ in the functor category $\mathrm{Fun}(\cC,\cC)$. 
In this paper, we will be interested in Hochschild cohomology of admissible categories, in which case we can use the following more down-to-earth description of Hochschild cohomology, due to Kuzntesov \cite{kuznetsov-HH}. 

Let  $\Db(X) = \langle \cA ,\cB \rangle$ be a semiorthogonal decomposition, where $X$ is a smooth projective variety. 
Then by \cite[Theorem 7.1]{Kuznetsov-base-change}, there is a distinguished triangle 
\begin{equation}
\label{eq:kernels_of_adm}
    P_\cB \to \Delta_\ast \cO_X \to P_\cA  
\end{equation}
in $\Db(X \times X)$, where $\Delta \colon X \to X \times X$ is the diagonal and the Fourier--Mukai functors $\Phi_{P_{\cA}}, \Phi_{P_{\cB}} \colon \Db(X) \to \Db(X)$ are the projection functors for the semiorthogonal decomposition $\Db(X) = \langle \cA, \cB \rangle$; explicitly, \eqref{eq:kernels_of_adm} is the decomposition of the object $\Delta_{*}\cO_X$ with respect to the semiorthogonal decomposition 
\begin{equation}
\label{sod-XX}
    \Db(X\times X) = \langle \cA \boxtimes \Db(X), \cB \boxtimes \Db(X) \rangle .
\end{equation}
In this situation, we can define the Hochschild cohomology of $\cA$ by 
\begin{equation*}
    \HH^\bullet(\cA) \coloneqq \Hom^\bullet (P_\cA, P_\cA). 
\end{equation*}
By the results of \cite{kuznetsov-HH}, $\HH^\bullet(\cA)$ so defined agrees with the other possible definitions of Hochschild cohomology, and is independent of the realization of $\cA \subset \Db(X)$ as a semiorthogonal component. 
If $\cC \subset \Db(X)$ is an admissible subcategory, then as recalled above it fits into a semiorthogonal decomposition of $\Db(X)$, so this definition of $\HH^\bullet(\cC)$ applies. 
For $\cC = \Db(X)$ the whole category, we use the notation 
\begin{equation*}
    \HH^{\bullet}(X) \coloneqq \HH^{\bullet}(\Db(X)). 
\end{equation*}

Hochschild cohomology is functorial with respect to fully faithful functors that admit an adjoint; see, for instance, \cite[\S4.1]{IHC-CY2} for a general discussion. 
In the situation above where $\cA \hookrightarrow \Db(X)$ is the inclusion of a component in a semiorthogonal decomposition $\Db(X) = \langle \cA , \cB \rangle$ with $X$ smooth and projective, this functoriality can be described explicitly as follows. 
We have $\Hom^{\bullet}(P_{\cB}, P_{\cA}) = 0$ due to the semiorthogonal decomposition~\eqref{sod-XX}, and thus applying $\Hom^\bullet(\Delta_*\cO_X, -)$ to the triangle~\eqref{eq:kernels_of_adm} yields the desired restriction morphism 
\begin{equation}
\label{HH-restriction-morphism}
\HH^\bullet(X) = \Hom^{\bullet}(\Delta_*\cO_X, \Delta_*\cO_X) \to \Hom^{\bullet}(\Delta_*\cO_X, P_{\cA}) \cong \Hom^{\bullet}(P_{\cA}, P_{\cA}) = \HH^{\bullet}(\cA). 
\end{equation}

\begin{definition}[{\cite{kuznetsov-heights}}]
Let  $\Db(X) = \langle \cA ,\cB \rangle$ be a semiorthogonal decomposition, where $X$ is a smooth projective variety. 
Then the \emph{normal Hochschild cohomology} $\NHH^{\bullet}(\cB, X)$ of $\cB$ in $\Db(X)$ is the fiber of the restriction morphism~\eqref{HH-restriction-morphism}, so that there is a distinguished triangle 
\begin{equation}
\label{equation:normal-hochschild-cohomology}
    \NHH^{\bullet}(\cB, X) \to \HH^{\bullet}(X) \to \HH^{\bullet}(\cA). 
\end{equation}
\end{definition}

\begin{remark}
    In \cite{kuznetsov-heights}, Kuznetsov defines normal Hochschild in a more general setting, but in the above situation the general definition is equivalent to ours by \cite[Theorem 3.3]{kuznetsov-heights}. 
\end{remark}

By definition, the normal Hochschild cohomology of $\cB$ in $\Db(X)$ measures the difference between the Hochschild cohomology of $X$ and $\cA$. 
One of the main upshots of \cite{kuznetsov-heights} is that $\NHH^{\bullet}(\cB, X)$ can often be effectively controlled when $\cB$ is generated by an exceptional collection.

\begin{definition}
[{\cite{kuznetsov-heights}}]
\label{definition: heights}
Let $X$ be a smooth projective variety.
Let $E_1, \dots, E_n$ be an exceptional collection on $X$, and let $\cE = \langle E_1, \dots, E_n \rangle \subset \Db(X)$ denote the triangulated subcategory generated by $E_1, \dots, E_n$.  
\begin{enumerate}
    \item The \emph{height} of the exceptional collection $E_1, \dots, E_n$ is
        \[
            \height(E_1, \dots, E_n) \coloneqq \min \{ k \in \mathbb{Z} \mid \NHH^k(\cE,X) \neq 0 \}.
        \]
    \item The \emph{relative height} $\rheight(F,F')$ of two objects $F,F'\in \Db(X)$ is
        \[
        \rheight(F,F') \coloneqq \min \{ k \in \mathbb{Z} \mid \Ext^k(F,F') \neq 0 \}.
        \]
    \item The \emph{pseudoheight} of the exceptional collection $E_1, \dots, E_n$ is
        \begin{align*}
            \psheight(E_1,\ldots, E_n)\coloneqq \min_{{1\le a_0<\cdots< a_p\le n}}\left(\sum_{i=0}^{p-1}\rheight(E_{a_i},E_{a_{i+1}}) + \rheight(E_{a_p},S^{-1}(E_{a_0})) - p \right),
        \end{align*}
    where $S$ denotes the Serre functor of $\Db(X)$.
    \item The \emph{anticanonical pseudoheight} of the exceptional collection $E_1,\ldots, E_n$ is 
    \[
        \psheight_{\rm{ac}}(E_1,\ldots, E_n) \coloneqq \min_{{1\le a_0<\cdots< a_p\le n}}\left(\sum_{i=0}^{p-1}\rheight(E_{a_i},E_{a_{i+1}}) + \rheight(E_{a_p},E_{a_0}\otimes \omega_X^{-1})) - p \right).
    \]
    \end{enumerate}
We note that there is an equality $\psheight(E_1,\ldots, E_n) = \psheight_{\rm{ac}}(E_1,\ldots, E_n) + \dim X$.
\end{definition}

In a precise sense, the height witnesses the fullness of a given exceptional collection while the pseudoheight is an approximation to the height that is more amenable to computation. The utility of these concepts is exemplified by the following lemma.

\begin{lemma}[{\cite[Lemma 4.5 and Proposition 6.1]{kuznetsov-heights}}]\label{lemma:heights-of-exc-coll}Let $E_1,\dots,E_n$ be an exceptional collection on a smooth projective variety $X$. Then $\height(E_1,\dots,E_n) \geq \psheight(E_1,\dots,E_n)$. Additionally, if $\psheight_{\rm{ac}}(E_1,\ldots, E_n)> -\dim X$, then the exceptional collection $E_1,\dots,E_n$ is not full.
\end{lemma}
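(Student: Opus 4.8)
The plan is to obtain both statements from a single computation of the normal Hochschild cohomology $\NHH^\bullet(\cE, X)$, where $\cE = \langle E_1, \dots, E_n\rangle$, by producing an explicit complex whose terms are governed by the relative heights that enter the pseudoheight. Writing the associated semiorthogonal decomposition as $\Db(X) = \langle \cA, \cE\rangle$, I would start from the identification $\NHH^\bullet(\cE, X) \cong \Hom^\bullet(\Delta_*\cO_X, P_\cE)$ obtained by applying $\Hom^\bullet(\Delta_*\cO_X, -)$ to the triangle $P_\cE \to \Delta_*\cO_X \to P_\cA$ (using that $\Hom^\bullet(\Delta_*\cO_X, P_\cA) \cong \HH^\bullet(\cA)$). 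The basic input is the adjunction computation of the terms that will appear: since $\Delta$ is a regular embedding with $\det N_\Delta \cong \omega_X^{-1}$, one has $\Delta^!(-) \cong \Delta^*(-)\otimes \omega_X^{-1}[-\dim X]$, and hence
\begin{equation*}
\Hom^\bullet_{X\times X}(\Delta_*\cO_X,\, E_i \boxtimes E_j^\vee) \cong \mathrm{RHom}^\bullet_X\big(E_j,\, E_i \otimes \omega_X^{-1}[-\dim X]\big) = \mathrm{RHom}^\bullet_X(E_j, S^{-1}E_i),
\end{equation*}
where $S$ is the Serre functor. This already explains the appearance of the term $\rheight(E_{a_p}, S^{-1}(E_{a_0}))$ in the pseudoheight.

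The technical heart, which I expect to be the main obstacle, is an explicit resolution (a Postnikov system / convolution) of the projection kernel $P_\cE$ built out of the external products $E_a \boxtimes E_a^\vee$ and the composition maps encoded by the groups $\mathrm{RHom}^\bullet(E_a, E_b)$; this is where the exceptionality of the collection is used in an essential way. Granting such a resolution, applying $\Hom^\bullet(\Delta_*\cO_X, -)$ termwise and invoking the computation above produces a bounded complex $C^\bullet$ computing $\NHH^\bullet(\cE, X)$, in which the summand attached to a chain $1 \le a_0 < a_1 < \cdots < a_p \le n$ is
\begin{equation*}
\Big(\bigotimes_{i=0}^{p-1}\mathrm{RHom}^\bullet(E_{a_i}, E_{a_{i+1}})\Big)\otimes \mathrm{RHom}^\bullet(E_{a_p}, S^{-1}E_{a_0})\,[p],
\end{equation*}
with differential induced by compositions. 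Since the lowest nonzero cohomological degree of a tensor product of complexes is the sum of the lowest degrees of the factors, and the shift by $[p]$ lowers degrees by $p$, this summand contributes to $C^\bullet$ only in cohomological degrees at least $\sum_{i=0}^{p-1}\rheight(E_{a_i},E_{a_{i+1}}) + \rheight(E_{a_p}, S^{-1}E_{a_0}) - p$.

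For the first assertion, I would then take the minimum over all chains: every term of $C^\bullet$ lives in cohomological degree at least $\psheight(E_1,\dots,E_n)$. As the cohomology of a complex supported in degrees $\ge m$ also vanishes below $m$, this gives $\NHH^k(\cE, X) = 0$ for $k < \psheight$, and therefore $\height(E_1,\dots,E_n) = \min\{k : \NHH^k(\cE,X)\neq 0\} \ge \psheight(E_1,\dots,E_n)$.

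For the second assertion, suppose $\psheight_{\rm{ac}}(E_1,\dots,E_n) > -\dim X$. By the stated identity $\psheight = \psheight_{\rm{ac}} + \dim X$ this means $\psheight \ge 1$, so the first part gives $\height \ge 1 > 0$. On the other hand, if the collection were full then $\cE = \Db(X)$, so $\cA = 0$ and $P_\cA = 0$; the defining triangle then forces $P_\cE \cong \Delta_*\cO_X$ and hence $\NHH^\bullet(\cE, X) \cong \HH^\bullet(X)$. Since $\HH^0(X) = H^0(X, \cO_X) = \bC \neq 0$, this would give $\height \le 0$, a contradiction. Hence the collection cannot be full.
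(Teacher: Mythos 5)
This lemma is not proved in the paper at all: it is imported verbatim from Kuznetsov's work on heights (the bracketed citation to \cite{kuznetsov-heights} \emph{is} the paper's proof), so there is no internal argument to compare yours against --- what you have written is in effect a reconstruction of Kuznetsov's own proof, and its architecture is faithful to it. The identification $\NHH^\bullet(\cE,X)\cong\Hom^\bullet(\Delta_*\cO_X,P_\cE)$ obtained by applying $\Hom^\bullet(\Delta_*\cO_X,-)$ to the triangle \eqref{eq:kernels_of_adm} is exactly how the paper's definition of normal Hochschild cohomology unwinds; the duality computation $\Hom^\bullet(\Delta_*\cO_X,E_i\boxtimes E_j^\vee)\cong\mathrm{RHom}^\bullet(E_j,S^{-1}E_i)$ is correct and is indeed the source of the Serre-functor term in the pseudoheight; and your non-fullness endgame is exactly Kuznetsov's Proposition 6.1: fullness forces $\cA=0$, hence $P_\cA=0$ (it lies in $\cA\boxtimes\Db(X)=0$), hence $\NHH^\bullet(\cE,X)\cong\HH^\bullet(X)$, which is nonzero in degree $0$, contradicting $\height\geq\psheight\geq 1$.

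The genuine gap is the step you explicitly grant: the existence of a convolution/Postnikov presentation of the projection kernel $P_\cE$ whose pieces, indexed by chains $a_0<\cdots<a_p$, are the bar-type terms carrying the multiplicity spaces $\bigotimes_i\mathrm{RHom}^\bullet(E_{a_i},E_{a_{i+1}})$. This is not a routine verification to be outsourced --- it is the main technical content of \cite{kuznetsov-heights} (the normal Hochschild cohomology spectral sequence), and producing it requires a dg or $A_\infty$ enhancement: triangulated-level mutations give the projection functor as an iterated cone, but they do not by themselves yield a functorial filtration of the \emph{kernel} with identifiable graded pieces and with differentials induced by composition, which is what your degree count needs. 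A second, smaller imprecision: applying $\Hom^\bullet(\Delta_*\cO_X,-)$ to such a convolution does not literally produce a complex computing $\NHH^\bullet$, but a filtration and hence a spectral sequence whose $E_1$-page consists of your summands; the conclusion survives because an abutment vanishes in degrees where every $E_1$-term vanishes, but the argument should be phrased that way. So: as a guide to why the cited lemma is true, your outline is accurate and the deductions drawn from the granted input are all correct; as a self-contained proof it is incomplete precisely at its acknowledged technical heart, and for the purposes of this paper the correct move is what the authors do --- cite Kuznetsov rather than reprove him.
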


\subsection{SHGH conjecture}
\label{section-SHGH}

Let $X$ be the blowup of $\bP^2$ at $n$ points in general position. The Segre–Harbourne–Gimigliano–Hirschowitz (SHGH) conjecture provides a precise prediction of the generic behavior of certain linear systems on $X$, see \cite[Conjecture 3.1]{krah}. The following result proves this conjecture for a divisor with ``small multiplicities''.

\begin{theorem}[{\cite[Theorem 34]{MR2325918}}]\label{theorem:SHGH-conjecture-low-multiplicities}
Let $X$ be the blowup of $\bP^2$ at $n$ points in general position. Let $H$ be the pullback to $X$ of the hyperplane class on $\bP^2$, and let $E_i$ for $i=1,\ldots,n$ be the exceptional divisors on $X$. Let $d>0$ and for each of $i=1,\ldots,n$ fix integers $0\leq m_i\leq 11$. Set $D=dH-\sum_{i=1}^n m_iE_i$. Then 
 \[ 
    \dim \mathrm{H}^0(X,\mathcal{O}_X(D))=\max \{0, \chi(X,\mathcal{O}_X(D))\}
\] 
or there exists a $(-1)$-curve $C\subset X$ with $C.D \leq -2$.
\end{theorem}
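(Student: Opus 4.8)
The plan is to recast the cohomological equality as a non-speciality statement for linear systems of plane curves and then establish it by a semicontinuity-plus-degeneration induction, with the $(-1)$-curve condition emerging as the only obstruction to the expected dimension.

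First I would dispose of the higher cohomology and reduce to a single vanishing. Since $d > 0$, the class $K_X - D$ cannot be effective, so Serre duality gives $\mathrm{H}^2(X, \cO_X(D)) \cong \mathrm{H}^0(X, \cO_X(K_X - D))^\vee = 0$; hence $\chi(X, \cO_X(D)) = h^0 - h^1$, and because $h^0 \geq \max\{0, \chi\}$ holds automatically, the asserted equality $h^0 = \max\{0, \chi\}$ is equivalent to $h^1(X, \cO_X(D)) = 0$ in the range $\chi \geq 0$ together with $h^0 = 0$ when $\chi < 0$. Geometrically, $\mathrm{H}^0(X, \cO_X(D))$ is the space of degree-$d$ plane curves having multiplicity at least $m_i$ at the $i$-th point, so the goal is exactly that this linear system is non-special unless its base locus forces an obstruction.

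The engine is upper-semicontinuity of cohomology. Since $h^0$ for points in general position is bounded above by $h^0$ for any special configuration, and since $\chi$ (and the vanishing of $h^2$) is unchanged under specialization, it suffices to exhibit a single special position of the points attaining $h^0 = \max\{0,\chi\}$. This licenses the key move: degenerate $\bP^2$ to a transverse union $\bP^2 \cup_R \bF_1$ of two rational surfaces glued along a line $R$ (the Ciliberto--Miranda degeneration), and distribute the $n$ points together with their multiplicities between the two components. The limit linear system then breaks into two systems on the components, each living on a blow-up of a simpler surface with fewer points or lower degree, to which I would apply the inductive hypothesis (induction on $d$). The bound $m_i \leq 11$ is preserved under the degeneration and serves to confine the analysis to a finite list of base cases.

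The delicate step --- and the one I expect to be the main obstacle --- is the gluing across the double curve $R$. Controlling the limit requires a Horace-type matching condition: one must show that the restrictions of the two component systems to $R$ together have maximal rank, which is where the kernel--cokernel bookkeeping of la méthode d'Horace enters and where the combinatorics of splitting the multiplicities $m_i$ must be arranged so that the inductive hypothesis applies to both pieces simultaneously. The $(-1)$-curve dichotomy then falls out of the failure cases: whenever the matching forces a fixed component, that component is precisely a $(-1)$-curve $C$ occurring with multiplicity at least two in the base locus, i.e. with $C \cdot D \leq -2$; peeling it off lowers $d$ and the multiplicities and feeds back into the induction, so that in the absence of any such curve the system is non-special. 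The finitely many small-$d$ base cases, bounded in number thanks to $m_i \leq 11$, are then checked directly --- the natural place for a computer-assisted verification.
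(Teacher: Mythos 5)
A point of orientation before anything else: the paper does not prove \cref{theorem:SHGH-conjecture-low-multiplicities} at all --- it is imported wholesale from \cite[Theorem 34]{MR2325918}, whose proof proceeds by Dumnicki's combinatorial ``reduction method'' on monomial diagrams together with a substantial computer verification, not by any degeneration of the plane. So your attempt cannot be matched against an internal proof; it can only be judged as a free-standing argument measured against the cited literature.

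Judged that way, your opening reductions are correct: $h^2$ vanishes by Serre duality since $d>0$, the stated equality is equivalent to $h^1=0$ when $\chi\geq 0$ and to $h^0=0$ when $\chi<0$, and upper semicontinuity does reduce the unobstructed case to exhibiting one good special configuration. But the body of the argument has genuine gaps exactly where the content of the theorem lies. First, the Horace/Ciliberto--Miranda matching across the double curve $R$ is asserted, not performed: for non-homogeneous multiplicities one must produce, for \emph{every} datum $(d;m_1,\dots,m_n)$ admitting no obstructing $(-1)$-curve, a splitting of the points and a twist of the limit line bundle for which both component systems and their restrictions to $R$ have the expected dimension; carrying this out is the entire difficulty, and in the literature this degeneration scheme has only been completed for homogeneous multiplicities (Ciliberto--Miranda) --- the general case $m_i\leq 11$ was obtained in \cite{MR2325918} precisely because a different, algorithmic method was needed. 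Second, your claim that whenever the matching fails, the failure ``is precisely a $(-1)$-curve $C$ occurring with multiplicity at least two'' does not fall out of the recursion: identifying multiple $(-1)$-curves as the only source of speciality is essentially the SHGH statement itself, and in a degeneration the a priori failure modes are arbitrary fixed components of limit systems on the two pieces, which are not visibly $(-1)$-curves on $X$. Third, the induction has no termination argument: you induct on $d$, but $n$ is unbounded, and the constraint $m_i\leq 11$ by itself does not confine the problem to finitely many base cases --- for fixed small $d$ and large $n$ the statement is still nontrivial, and reducing this infinite family to a finite computer check is the main technical achievement of \cite{MR2325918}. In short, the proposal describes a known strategy accurately at the level of headlines, but the three steps above \emph{are} the theorem, and they are missing.
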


We will also need the following lemma from \cite{krah} that, together with Theorem \ref{theorem:SHGH-conjecture-low-multiplicities}, will allow us to compute the dimension of the space of global sections of certain divisors on $X$. Using the notation above, we say that a divisor $D=dH-\sum_{i=1}^n m_i E_i$ is in \emph{standard form} if the following are true:

\begin{enumerate}[itemsep=1mm]
\item $d>0$ and $m_i\geq 0$ for all $i=1,\dots ,n$;
\item $d\geq m_1\geq \cdots \geq m_n$; and 
\item $d-m_1-m_2-m_3\geq 0$.
\end{enumerate}

\begin{lemma}[{\cite[Lemma 3.2]{krah}}]\label{lem: krah}
If $D$ is a divisor on $X$ in standard form and  $C\subset X$ is a $(-1)$-curve, then $C.D\geq 0$.
\end{lemma}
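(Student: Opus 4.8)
The plan is to translate the statement into an elementary numerical inequality and then solve a small optimization problem in three variables. Writing $C = aH - \sum_{i=1}^n b_i E_i$ and recalling $D = dH - \sum_{i=1}^n m_i E_i$, the intersection form gives
\[
C \cdot D = ad - \sum_{i=1}^n b_i m_i ,
\]
so the goal becomes $\sum_i b_i m_i \le ad$. First I would record the geometric inputs. Since $H$ is nef and $C$ is an irreducible curve, $a = C \cdot H \ge 0$, with $a = 0$ exactly when $C$ is contracted by the blowup, i.e. $C = E_k$ for some $k$; in that case one computes directly that $C \cdot D = m_k \ge 0$ and we are done. When $a \ge 1$, the curve $C$ is the strict transform of an irreducible plane curve of degree $a$, so each $b_i$ is a multiplicity of that curve at the $i$-th point and hence $b_i \ge 0$.

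Next I would extract the defining relations of a $(-1)$-curve. From $C^2 = -1$ together with the adjunction identity $K_X \cdot C = -1$ (using $K_X = -3H + \sum_j E_j$) one obtains
\[
\sum_{i=1}^n b_i^2 = a^2 + 1 \qquad\text{and}\qquad \sum_{i=1}^n b_i = 3a - 1 .
\]
The first relation, together with integrality, yields the pointwise bound $b_i \le a$ for every $i$ (since $b_i^2 \le a^2 + 1 < (a+1)^2$) and the pairwise bound $b_i + b_j \le 2a$ for $i \ne j$ (since $(b_i + b_j)^2 \le 2(b_i^2 + b_j^2) \le 2a^2 + 2 < (2a+1)^2$). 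These two bounds, the nonnegativity $b_i \ge 0$, and the identity $\sum_i b_i = 3a - 1$ are the only properties of $C$ that the rest of the argument uses.

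The heart of the proof is the bound $\sum_i b_i m_i \le ad$. Here I would exploit that a divisor in standard form already has ordered multiplicities $m_1 \ge m_2 \ge \cdots \ge m_n \ge 0$, so that $m_i \le m_3$ for all $i \ge 3$; combined with $b_i \ge 0$ this gives
\[
\sum_{i=1}^n b_i m_i \;\le\; b_1 m_1 + b_2 m_2 + m_3 \sum_{i \ge 3} b_i ,
\]
where $\sum_{i \ge 3} b_i = (3a-1) - b_1 - b_2 \ge 0$. The right-hand side is a linear functional with nonnegative coefficients in $(m_1, m_2, m_3)$, and these range over the compact polytope cut out by $m_1 \ge m_2 \ge m_3 \ge 0$ and $m_1 + m_2 + m_3 \le d$, the last inequality being precisely standard-form condition (3). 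The maximum of the functional is attained at a vertex, and the nonzero vertices are $(d,0,0)$, $(\tfrac d2, \tfrac d2, 0)$, and $(\tfrac d3, \tfrac d3, \tfrac d3)$, where it takes the values $b_1 d$, $\tfrac12(b_1 + b_2)d$, and $\tfrac13(3a-1)d$. By the bounds of the previous paragraph each of these is at most $ad$ (respectively because $b_1 \le a$, because $b_1 + b_2 \le 2a$, and because $3a - 1 < 3a$), so $\sum_i b_i m_i \le ad$ and hence $C \cdot D \ge 0$.

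The main obstacle is essentially organizational rather than deep: one must find the correct three-variable reduction so that the single coupling hypothesis $d \ge m_1 + m_2 + m_3$ is exactly what bounds the weighted sum, and one must check that the quadratic relation $\sum_i b_i^2 = a^2 + 1$ delivers precisely the inequalities $b_i \le a$ and $b_i + b_j \le 2a$ demanded at the three vertices. I would also note the degenerate cases $n \le 2$ (read $m_3 = 0$), which are covered verbatim by the same vertex computation, and the case $a = 1$ (forced to be $C = H - E_i - E_j$), which likewise requires no separate treatment once the general bound is in place.
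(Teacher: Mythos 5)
Your proof is correct, but note that there is nothing in the paper itself to compare it against: the lemma is quoted from Krah's paper (\cite[Lemma 3.2]{krah}) with no proof reproduced here, so your argument serves as a self-contained substitute for the citation. Each step checks out: the reduction to $a \ge 1$ (with $C = E_k$ handled directly), the identities $\sum_i b_i = 3a-1$ and $\sum_i b_i^2 = a^2+1$ coming from $C \cdot K_X = C^2 = -1$, the integrality bounds $b_i \le a$ and $b_i + b_j \le 2a$, and the vertex enumeration of the polytope $\{m_1 \ge m_2 \ge m_3 \ge 0,\ m_1+m_2+m_3 \le d\}$ are all valid, and the standard-form hypotheses enter exactly where they should. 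For comparison, a standard route to this statement --- to which your linear program is precisely dual --- is to expand a standard-form divisor as a nonnegative combination
\[
D = \alpha_0 H + \alpha_1 (H - E_1) + \alpha_2 (2H - E_1 - E_2) + \sum_{k \ge 3} \alpha_k \left(3H - E_1 - \cdots - E_k\right),
\]
with $\alpha_0 = d - m_1 - m_2 - m_3$, $\alpha_1 = m_1 - m_2$, $\alpha_2 = m_2 - m_3$, $\alpha_k = m_k - m_{k+1}$ for $3 \le k < n$, and $\alpha_n = m_n$, all nonnegative exactly by the standard-form conditions, and then to check that each generator pairs nonnegatively with $C$; those pairings are nonnegative by exactly your three estimates, namely $a - b_1 \ge 0$, $2a - b_1 - b_2 \ge 0$, and $3a - \sum_{i \le k} b_i \ge 3a - (3a-1) = 1 > 0$. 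Indeed, your nonzero vertices $(d,0,0)$, $(d/2,d/2,0)$, $(d/3,d/3,d/3)$ are multiples of $H - E_1$, $2H - E_1 - E_2$, and $3H - E_1 - \cdots - E_n$, so the two arguments carry identical numerical content and differ only in organization; yours has the mild advantage of not requiring one to guess the generating set in advance. Two trivial tidying points: the vertex $(0,0,0)$ should also be checked (the value there is $0 \le ad$), and the remark that the functional has nonnegative coefficients is unnecessary, since a linear functional on a compact polytope attains its maximum at a vertex regardless.
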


\begin{corollary}
\label{Cor:standardformcohomology}
    If $D = dH - \sum_{i=0}^n m_i E_i$ is a divisor on $X$ in standard form with $d>0$, and $0\le m_i\le 11$ for all $i=1,\ldots, n$, then 
    \[
        \dim \mathrm{H}^0(X,\cO_X(D)) = \max\{0,\chi(X,\cO_X(D))\}.
    \]
\end{corollary}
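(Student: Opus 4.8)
The plan is to obtain the corollary as a formal consequence of the two preceding results, with no additional geometric input required. The divisor $D = dH - \sum_{i=1}^{n} m_i E_i$ satisfies exactly the hypotheses of \Cref{theorem:SHGH-conjecture-low-multiplicities}: we are given $d > 0$ and $0 \le m_i \le 11$ for all $i$. So I would begin by simply invoking that theorem, which yields the dichotomy that \emph{either}
\[
    \dim \mathrm{H}^0(X, \cO_X(D)) = \max\{0, \chi(X, \cO_X(D))\}
\]
holds --- which is precisely the desired conclusion --- \emph{or} there exists a $(-1)$-curve $C \subset X$ with $C.D \le -2$.

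The second step is to rule out the latter alternative using the standard-form hypothesis. Since by assumption $D$ is in standard form, \Cref{lem: krah} applies and gives $C.D \ge 0$ for \emph{every} $(-1)$-curve $C \subset X$. In particular no $(-1)$-curve can satisfy $C.D \le -2$, so the second branch of the dichotomy is vacuous. Hence the first alternative must hold, which is exactly the claimed equality, and the proof is complete.

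There is essentially no serious obstacle here: the entire mathematical content has been front-loaded into \Cref{theorem:SHGH-conjecture-low-multiplicities} (the known low-multiplicity case of SHGH) and \Cref{lem: krah} (Krah's observation that standard form forces nonnegative intersection with all $(-1)$-curves), and the corollary is the clean merger of the two. The only points that warrant a moment's care are bookkeeping rather than substance: checking that the numerical hypotheses $d>0$ and $0\le m_i\le 11$ transfer verbatim to license the theorem, and noting that the index range in the statement should read $i=1,\dots,n$ (the displayed $\sum_{i=0}^n$ appears to be a typo). Thus the argument is a short two-line deduction, and I would present it as such.
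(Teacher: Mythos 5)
Your proof is correct and is exactly the paper's argument: the paper's proof simply states that the corollary is immediate from Theorem~\ref{theorem:SHGH-conjecture-low-multiplicities} and Lemma~\ref{lem: krah}, and your write-up just makes the two-step deduction (dichotomy, then ruling out the $(-1)$-curve branch via standard form) explicit.
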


\begin{proof}
    This is immediate from Theorem \ref{theorem:SHGH-conjecture-low-multiplicities} and Lemma \ref{lem: krah}.
\end{proof}

\section{Phantoms on blowups of the projective plane}

\subsection{The blowup of \texorpdfstring{$\bP^2$}{P2} at 11 points}
\label{S:p211points}

In this section, we prove Theorem \ref{theorem:phantom-11-points}. We will freely use the notation therein. 
In particular, $X$ denotes the blowup of $\bP^2$ at $11$ points in general position. 
Orlov's blowup formula for derived categories  \cite[Theorem 4.3]{MR1208153} together with the Beilinson decomposition of $\Db(\bP^2)$ gives a semiorthogonal decomposition 
\begin{equation*}
    \Db(X) = \langle \cO_{E_1}(E_1), \dots, \cO_{E_{11}}(E_{11}), \cO_{X}, \cO_{X}(H), \cO_{X}(2H) \rangle. 
\end{equation*}
By right mutating the objects $\cO_{E_{i}}(E_i)$ past $\cO_{X}$, we obtain a semiorthogonal decomposition 
\begin{equation*} 
     \Db(X) = \langle \cO_X, \cO_X(E_1),\dots, \cO_X(E_{11}),\cO_X(H),\cO_X(2H) \rangle.
\end{equation*}
Observe that $K_X = -3H + \sum_{j=1}^{11} E_j$, and so $K_X^2 = -2$.  Thus, we can consider the transformation on $\Pic(X)$ given by the negative of the reflection along $K_X$:
\begin{equation}
\label{E:Picreflection}
    \begin{split}
        \iota : \Pic(X) &\to \Pic(X), \\
        D &\mapsto -D + 2 \left( \frac{D.K_X}{K_X^2} \right) K_X = -D -(D.K_X) K_X.
    \end{split}
\end{equation}
It is crucial that $\lvert K_X^2 \rvert \le 2$ since it ensures that $\iota$ is an endomorphism of the lattice $\Pic(X)$. Additionally, $\iota$ is an isometric involution of $\Pic(X)$, i.e. $\iota^2 = 1$ and it preserves the intersection product.
Since $\iota(K_X) = K_X$, Riemann--Roch for surfaces and the isometric property then show that for any divisor $D$ on $X$ we have  
\begin{align}
\begin{split}
\label{E:RRsurface}
    \chi(\cO_X(D))  & = \chi(\cO_X) + \tfrac{1}{2} D.(D-K_X)\\
    & = \chi(\cO_X) + \tfrac{1}{2}\iota(D).\iota(D - K_X) \\
    & = \chi(\cO_X(\iota(D))).
\end{split}
\end{align}
Next, observe that
\[ 
    D_i \coloneqq \iota(E_i) = -3H + \sum_{j=1}^{11} E_j - E_i \quad \text{and} \quad F\coloneqq \iota(H) = -10H + 3\sum_{j=1}^{11} E_j. 
\]
Then, \eqref{E:RRsurface} implies that
\begin{equation}\label{equation:transformed-collection}
    \cO_X, \cO_X(D_1),\dots, \cO_X(D_{11}),\cO_X(F),\cO_X(2F) 
\end{equation}
is a numerically exceptional collection of length equal to $\operatorname{rank} \mathrm{K}_0(X)$. In what follows, we abbreviate $\dim \Hom$ as $\hom$ and $\dim \Ext^i$ as $\ext^i$ for $i \in \mathbb{Z}$.

\begin{lemma}\label{lemma:exc-collection-11-pts}
    The collection (\ref{equation:transformed-collection}) is an exceptional collection. 
    Moreover, the line bundles $\cO_X(D_1), \dots, \cO_X(D_{11})$ are completely orthogonal. 
\end{lemma}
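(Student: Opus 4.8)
The plan is to leverage the numerical exceptionality of \eqref{equation:transformed-collection}, already recorded via \eqref{E:RRsurface}, and to upgrade it to an honest exceptional collection by a cohomology-vanishing argument. Writing the ordered collection as $\cO_X(L'_0),\dots,\cO_X(L'_{13})$ with $L'_0 = 0$, $L'_k = D_k$ for $1 \le k \le 11$, $L'_{12} = F$, and $L'_{13} = 2F$, I first note that each object is exceptional: $\Ext^\bullet(\cO_X(L'),\cO_X(L')) = \cohg^\bullet(X,\cO_X)$, which is $\bC$ in degree $0$ and vanishes otherwise since $X$ is rational. For a pair with $i > j$ one has $\Ext^k(\cO_X(L'_i),\cO_X(L'_j)) = \cohg^k(X,\cO_X(L'_j - L'_i))$, and numerical exceptionality gives $\chi(\cO_X(L'_j - L'_i)) = 0$. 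Hence it suffices to prove that $\cohg^0$ and $\cohg^2$ of $\cO_X(L'_j - L'_i)$ both vanish; the vanishing of $\cohg^1$ then follows formally from $\chi = 0$, so the whole problem reduces to two families of $\cohg^0$-vanishing statements.

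The $\cohg^2$ vanishing is the easy half. By Serre duality, $\cohg^2(X,\cO_X(L'_j - L'_i)) \cong \cohg^0(X,\cO_X(L'_i - L'_j + K_X))^\vee$. I would compute $L'_i - L'_j + K_X$ for each of the finitely many pairs and observe that its $H$-coefficient is always strictly negative. Since $H$ is nef and any effective class pairs non-negatively against it, a divisor with strictly negative $H$-coefficient is non-effective, so all of these $\cohg^0$'s vanish.

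The $\cohg^0$ vanishing of $\cO_X(L'_j - L'_i)$ is the heart of the matter, and the pairs split into two types. For pairs drawn from the $D$'s, the relevant difference is $\pm(E_i - E_j)$; such a difference of distinct exceptional classes is non-effective, since it has negative intersection with one of the $E_i$, which would then be forced to be a component, leading to a contradiction. Thus $\cohg^0 = 0$ in both orders; together with a direct Riemann--Roch computation giving $\chi(\cO_X(\pm(E_i - E_j))) = 0$ and the $\cohg^2$ vanishing above, this forces all $\Ext$ groups between distinct $\cO_X(D_i),\cO_X(D_j)$ to vanish, which is exactly the asserted complete orthogonality. For every remaining pair, the difference $L'_j - L'_i$ has strictly positive $H$-coefficient, and I would check that, after reordering its (symmetric) multiplicities, it is in standard form, with all multiplicities bounded by $6 \le 11$. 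Then \Cref{Cor:standardformcohomology} applies and gives $\cohg^0(X,\cO_X(L'_j - L'_i)) = \max\{0,\chi\} = 0$, again using $\chi = 0$.

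The main obstacle is precisely this last verification: that every relevant positive difference lands in the standard-form regime covered by the proven low-multiplicity case of SHGH (\Cref{theorem:SHGH-conjecture-low-multiplicities}). Concretely, one must confirm the inequality $d - m_1 - m_2 - m_3 \ge 0$ for each difference and that the multiplicities never exceed $11$; this is where the specific shape of the reflected divisors $D_i$, $F$, $2F$ and the choice of $11$ points enter in an essential way. Were any difference to fall outside standard form, one would be forced either to apply Cremona transformations to reduce it or to invoke the full SHGH conjecture rather than its established special case.
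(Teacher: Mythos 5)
Your proposal is correct and follows essentially the same route as the paper's proof: reduce to $h^0$ and $h^2$ vanishing via numerical exceptionality, kill the Serre-dual $h^2$ terms by negativity against the nef class $H$, handle the differences $\pm(E_i - E_j)$ by the direct non-effectivity argument, and apply Corollary~\ref{Cor:standardformcohomology} to the remaining differences. The verification you flag as the main obstacle does go through exactly as you anticipate: the five divisor classes $-F$, $-2F$, $-D_i$, $D_i - F$, $D_i - 2F$ are all in standard form with multiplicities at most $6 \le 11$, which is precisely the check the paper performs.
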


\begin{proof}
    To show that (\ref{equation:transformed-collection}) is exceptional, it suffices to check that the appropriate $\Hom$ and $\Ext^2$ groups vanish. This is because the collection is numerically exceptional and 
\[
    \chi(E,F) = \hom(E,F) - \ext^1(E,F) + \ext^2(E,F).
\] 
The lemma thus amounts to the vanishing of the following dimensions, where $1 \leq i,j \leq 11$ and $i \neq j$: 
\begin{align*}
    \hom(\cO_X(2F),\cO_X(F)) &= h^0(-F),\\
    \ext^2(\cO_X(2F),\cO_X(F)) &= h^2(-F) = h^0(K_X + F),\\
    \hom(\cO_X(2F),\cO_X(D_i)) &= h^0(D_i-2F),\\
    \ext^2(\cO_X(2F),\cO_X(D_i)) &= h^2(D_i-2F) = h^0(K_X-D_i+2F),\\
    \hom(\cO_X(2F),\cO_X) &= h^0(-2F),\\
    \ext^2(\cO_X(2F),\cO_X) &= h^2(-2F) = h^0(K_X + 2F),\\
    \hom(\cO_X(F),\cO_X(D_i)) &= h^0(D_i-F),\\
    \ext^2(\cO_X(F),\cO_X(D_i)) &= h^2(D_i-F) = h^0(K_X-D_i+F),\\
    \hom(\cO_X(F),\cO_X) &= h^0(-F),\\
    \ext^2(\cO_X(F),\cO_X) &= h^2(-F) = h^0(K_X + F),\\
    \hom(\cO_X(D_i),\cO_X(D_j)) &= h^0(D_j-D_i),\\
    \ext^2(\cO_X(D_i),\cO_X(D_j)) &= h^2(D_j-D_i) = h^0(K_X-D_j+D_i),\\
    \hom(\cO_X(D_i),\cO_X) &= h^0(-D_i),\\
    \ext^2(\cO_X(D_i),\cO_X) &= h^2(-D_i) = h^0(K_X+D_i),
\end{align*}
The value of $h^0(D)$ in a given line above is zero if the divisor $D$ has negative intersection with the nef divisor $H$ or if $D = D_j - D_i = E_i - E_j$. The remaining cases are:
\begin{equation*}
\begin{gathered}
    -F = 10H - 3\sum_{j=1}^{11} E_j, \quad  -2F = 20H - 6\sum_{j=1}^{11} E_j,\\
    -D_i = 3H -\sum_{j=1}^{11}E_j +E_i,\\
    D_i - F = 7H - 2\sum_{j=1}^{11} E_j- E_i, \quad D_i - 2F = 17H - 5\sum_{j=1}^{11} E_j- E_i
\end{gathered}
\end{equation*}
Observe that if $D = dH - \sum_{j=1}^{11} m_i E_i$ is one of the five divisors above, then $D$ is in standard form.
Thus, by Corollary \ref{Cor:standardformcohomology} we have $h^0(D) = \max\{0, \chi(\cO_X(D))\} = 0$, since $\chi(\cO_X(D)) = 0$ because (\ref{equation:transformed-collection}) is numerically exceptional.
\end{proof}

\begin{lemma}\label{lemma:exts-between-elts-of-exc-coll}
    Let $M_1,M_2$ be any two distinct elements of the collection (\ref{equation:transformed-collection}). Then we have $\Ext^k(M_1,M_2) = 0$ for all $k \neq 2$.
    In particular, we get that $\rheight(M_1,M_2) = 2$ or $\infty$.
    Moreover, $\Ext^k(D_i,D_j) = 0$ for all $i\neq j$, so $\rheight(D_i,D_j)=\infty$.
\end{lemma}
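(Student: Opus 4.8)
The plan is to reduce the whole statement to the cohomology of line bundles on the surface $X$ and then feed in the SHGH input recorded in Corollary~\ref{Cor:standardformcohomology}. Writing $M_1 = \cO_X(A)$ and $M_2 = \cO_X(B)$, we have $\Ext^k(M_1,M_2) = H^k(X,\cO_X(B-A))$, which vanishes automatically for $k \notin \{0,1,2\}$ since $X$ is a surface. So the claim reduces to controlling $H^0, H^1, H^2$ of the difference divisor $D \coloneqq B - A$.

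First I would dispose of the pairs in which $M_1$ occurs strictly after $M_2$ in the ordering of~\eqref{equation:transformed-collection}: for these $\mathrm{RHom}(M_1,M_2) = 0$ by the exceptionality established in Lemma~\ref{lemma:exc-collection-11-pts}, so every $\Ext^k$ vanishes. It remains to treat the ``forward'' pairs, where $M_1$ precedes $M_2$; here $D$ ranges over the explicit list $D_i$, $F$, $2F$, $F - D_i$, and $2F - D_i$ (the pair $(\cO_X(F),\cO_X(2F))$ again contributing $D = F$). For each such $D$ I must show $H^0(X,\cO_X(D)) = H^1(X,\cO_X(D)) = 0$, leaving only $H^2$ possibly nonzero.

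The vanishing $H^0 = 0$ is immediate, since each of these divisors has strictly negative coefficient of $H$ and hence negative intersection with the nef class $H$, so cannot be effective. The crux is $H^1 = 0$. For this I would use $h^1(D) = h^0(D) + h^2(D) - \chi(\cO_X(D))$ together with Serre duality $h^2(D) = h^0(K_X - D)$; since $h^0(D) = 0$, it suffices to prove $h^0(K_X - D) = \chi(\cO_X(D))$. The Euler characteristic is easy to pin down: because $\iota$ is an isometry fixing $K_X$, relation~\eqref{E:RRsurface} identifies each $\chi(\cO_X(D))$ with the corresponding Euler characteristic for the original collection $\cO_X, \cO_X(E_i), \cO_X(H), \cO_X(2H)$, and one checks these are the small positive integers $1,2,3,5,6$. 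For the term $h^0(K_X - D)$ I would write out $K_X - D$ in each case and observe that (after relabelling the $E_j$ so that the multiplicities are non-increasing) it is a divisor in standard form with multiplicities at most $11$; then Corollary~\ref{Cor:standardformcohomology} gives $h^0(K_X-D) = \max\{0,\chi(K_X-D)\}$, and Serre duality $\chi(K_X-D) = \chi(\cO_X(D))$ together with positivity yields the desired identity. The single exception is $D = D_i$, where $K_X - D_i = E_i$ is not in standard form; there I would instead note directly that $h^0(\cO_X(E_i)) = 1 = \chi(\cO_X(D_i))$.

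I expect the main obstacle to be precisely the bookkeeping in this last step: verifying case by case that each $K_X - D$ lands in standard form and meets the multiplicity hypothesis of Corollary~\ref{Cor:standardformcohomology}, while handling the $D = D_i$ exception separately. Once $H^0 = H^1 = 0$ is known, the relative-height statement is formal: $\rheight(M_1,M_2)$ equals $2$ when $H^2(X,\cO_X(D)) \neq 0$ and $\infty$ otherwise, which gives the asserted dichotomy (in fact the forward pairs all have $H^2 \neq 0$, so they realize the value $2$). Finally, $\Ext^k(D_i,D_j) = 0$ for all $k$ and all $i \neq j$, and hence $\rheight(D_i,D_j) = \infty$, is nothing but the complete orthogonality of $\cO_X(D_1),\dots,\cO_X(D_{11})$ already proved in Lemma~\ref{lemma:exc-collection-11-pts}.
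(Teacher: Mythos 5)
Your proposal is correct and takes essentially the same approach as the paper: backward pairs vanish by exceptionality, the pairs $(\cO_X(D_i),\cO_X(D_j))$ by the complete orthogonality from Lemma~\ref{lemma:exc-collection-11-pts}, forward $\Hom$'s by negativity against the nef class $H$, and $\Ext^1$ is killed by identifying $h^2 = h^0(K_X - D)$ with the (nonnegative) Euler characteristic via Serre duality and Corollary~\ref{Cor:standardformcohomology}, with the exceptional case $K_X - D_i = E_i$ checked by hand. The only cosmetic difference is that you evaluate the Euler characteristics through the isometry $\iota$ and relation~\eqref{E:RRsurface}, whereas the paper invokes Riemann--Roch directly.
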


\begin{proof}
    The last claim of the lemma follows from \cref{lemma:exc-collection-11-pts} and \cref{definition: heights}.

    We do the rest in two steps. First, we show that $\Hom$s between elements of the collection are zero. By Lemma \ref{lemma:exc-collection-11-pts}, our collection is exceptional, so we only need to show that the following are zero:
    \begin{align*}
        \hom(\mathcal{O}_X, \mathcal{O}_X(D_i)) &= h^0(D_i), \\
        \hom(\mathcal{O}_X, \mathcal{O}_X(F)) &= h^0(F),\\
        \hom(\mathcal{O}_X, \mathcal{O}_X(2F)) &= h^0(2F), \\
        \hom(\mathcal{O}_X(D_i), \mathcal{O}_X(F)) &= h^0(F - D_i), \\
        \hom(\mathcal{O}_X(D_i), \mathcal{O}_X(2F)) &= h^0(2F - D_i), \\
        \hom(\mathcal{O}_X(F), \mathcal{O}_X(2F)) &= h^0(F).
\end{align*}
    All of these are zero either because the divisor pairs negatively with $H$. 
   
    Next, we show that $\ext^2(M_1,M_2) = \chi(M_1,M_2)$ for $M_1,M_2$ any two distinct elements of the collection (\ref{equation:transformed-collection}). This would imply that $\ext^1(M_1,M_2) = 0$, as required. We have
    \begin{align*}
        \ext^2(\mathcal{O}_X, \mathcal{O}_X(D_i)) &= h^2(D_i) = h^0(K_X - D_i) = h^0(E_i) = 1 = \chi(E_i), \\
        \ext^2(\mathcal{O}_X, \mathcal{O}_X(F)) &= h^2(F) = h^0(K_X - F) = h^0(7H - 2\sum_{j=1}^{11} E_j),\\
        \ext^2(\mathcal{O}_X, \mathcal{O}_X(2F)) &= h^2(2F) = h^0(K_X - 2F) = h^0(17H - 5\sum_{j=1}^{11} E_j), \\
        \ext^2(\mathcal{O}_X(D_i), \mathcal{O}_X(F)) &= h^2(F - D_i) = h^0(K_X - F + D_i) = h^0(4H -\sum_{j=1}^{11} E_j - E_i), \\
        \ext^2(\mathcal{O}_X(D_i), \mathcal{O}_X(2F)) &= h^2(2F - D_i) = h^0(K_X - 2F + D_i) = h^0(14H -4\sum_{j=1}^{11} E_j - E_i), \\
        \ext^2(\mathcal{O}_X(F), \mathcal{O}_X(2F)) &= h^2(F) = h^0(K_X - F) = h^0(7H -2\sum_{j=1}^{11} E_j).
\end{align*}
    For the
    divisors $E_i$,
    we check the equality with the Euler characteristic by hand, whereas all the other divisors are in standard form and fall under the cases of Theorem \ref{theorem:SHGH-conjecture-low-multiplicities}. Thus, Corollary \ref{Cor:standardformcohomology} gives us the equality with the Euler characteristic since the Euler characteristic of each of the above divisors is at least $0$ by Riemann--Roch.
\end{proof}

\begin{lemma}\label{lemma:anticanonical-pseudoheight-3}
    The anticanonical pseudoheight of the exceptional collection (\ref{equation:transformed-collection}) is $1$, or equivalently, its pseudoheight is $3$. In fact, the height of (\ref{equation:transformed-collection}) is $3$.
\end{lemma}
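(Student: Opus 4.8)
The plan is to deduce the statement from the identity $\psheight = \psheight_{\mathrm{ac}} + \dim X$ recorded just after \cref{definition: heights}. Since $\dim X = 2$, the assertions $\psheight_{\mathrm{ac}} = 1$ and $\psheight = 3$ are equivalent, so I would first compute the anticanonical pseudoheight. The two inputs I would feed into its defining minimum are already available: by \cref{lemma:exts-between-elts-of-exc-coll}, for increasing indices $a_i < a_{i+1}$ one has $\rheight(E_{a_i}, E_{a_{i+1}}) \in \{2, \infty\}$, hence $\ge 2$; and since every member of the collection is a line bundle $\cO_X(L)$, the twisted group $\Ext^k(\cO_X(L'), \cO_X(L) \otimes \omega_X^{-1}) = H^k(\cO_X(L - L' - K_X))$ is line-bundle cohomology on a surface, so the anticanonical term $\rheight(E_{a_p}, E_{a_0} \otimes \omega_X^{-1})$ is always $\ge 0$.

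For the pseudoheight itself I would split the defining minimum according to the length $p$ of the chain $a_0 < \cdots < a_p$. When $p \ge 1$, the $p$ intermediate terms contribute at least $2$ each and the anticanonical term at least $0$, so the bracketed quantity is at least $2p - p = p \ge 1$. When $p = 0$ the bracket equals $\rheight(E_{a_0}, E_{a_0} \otimes \omega_X^{-1})$, the least $k$ with $H^k(\omega_X^{-1}) \ne 0$, which I would compute directly: the divisor $-K_X = 3H - \sum_j E_j$ is in standard form, so \cref{Cor:standardformcohomology} together with the Riemann--Roch value $\chi(\omega_X^{-1}) = 1 + K_X^2 = -1$ gives $h^0(\omega_X^{-1}) = 0$, while $h^2(\omega_X^{-1}) = h^0(2K_X) = 0$ since $2K_X$ has negative degree against $H$; hence $h^1(\omega_X^{-1}) = 1$ and $\rheight(E_{a_0}, E_{a_0} \otimes \omega_X^{-1}) = 1$. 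Thus the minimum is $1$, attained exactly on the length-zero chains, giving $\psheight_{\mathrm{ac}} = 1$, $\psheight = 3$, and by \cref{lemma:heights-of-exc-coll} the bound $\height \ge 3$.

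The remaining point, $\height = 3$, is where the real work lies, since the pseudoheight furnishes only a lower bound: I must produce a nonzero class in $\NHH^3(\cE, X)$. I would argue directly with Kuznetsov's complex computing $\NHH^{\bullet}(\cE, X)$, filtered by chain length $p$, whose pieces sit in cochain degrees shifted down by $p$. By the pseudoheight computation this complex vanishes in cochain degrees $< 3$, and in cochain degree $3$ the only contributions come from length-zero chains, supplying $\bigoplus_{a} \Ext^3(E_a, S^{-1} E_a) = \bigoplus_a H^1(\omega_X^{-1}) \cong \bC^{14}$. The claim is that these classes survive to cohomology. Nothing maps into them from below, as cochain degree $2$ vanishes. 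The differentials leaving them, and any length-one term that might reach cochain degree $3$, are governed by compositions landing in groups $\Ext^{3}(E_{a_0}, E_{a_1} \otimes \omega_X^{-1}) \cong H^{3}(\cO_X(L_{a_1} - L_{a_0} - K_X))$ or in negative $\Ext$-degrees, all of which vanish because $X$ is a surface. Concretely, a length-one chain $a_0 < a_1$ can only reach cochain degree $3$ via $\Ext^2(E_{a_0}, E_{a_1}) \otimes \Hom(E_{a_1}, E_{a_0} \otimes \omega_X^{-1})$, so it suffices to show $\Hom(E_{a_1}, E_{a_0} \otimes \omega_X^{-1}) = H^0(\cO_X(L_{a_0} - L_{a_1} - K_X)) = 0$ for all $a_0 < a_1$; granting this, $\NHH^3(\cE, X) \cong \bC^{14} \ne 0$ and hence $\height = 3$.

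I expect this last vanishing to be the main obstacle. For the pairs involving $\cO_X$, $\cO_X(F)$, and $\cO_X(2F)$, the divisors $L_{a_0} - L_{a_1} - K_X$ are in standard form with negative Euler characteristic, so \cref{Cor:standardformcohomology} applies verbatim. The delicate cases are the pairs $(D_i, D_j)$ with $i < j$, where $L_{a_0} - L_{a_1} - K_X = 3H - 2E_i - \sum_{k \ne i, j} E_k$ is \emph{not} in standard form; here I would invoke \cref{theorem:SHGH-conjecture-low-multiplicities} directly (the multiplicities being $\le 11$) together with a short check that no $(-1)$-curve $C$ satisfies $C.(3H - 2E_i - \sum_{k \ne i, j} E_k) \le -2$, which then forces $h^0 = \max\{0, \chi\} = 0$. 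This $(-1)$-curve analysis, rather than the homological bookkeeping, is where the genuine content of the height computation sits.
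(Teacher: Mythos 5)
Your pseudoheight computation (the first two paragraphs) is correct and coincides with the paper's: chains of length $p \ge 1$ contribute at least $1$ by \cref{lemma:exts-between-elts-of-exc-coll}, and every length-zero chain contributes $\rheight(\cO_X,\cO_X(-K_X)) = 1$ via \cref{Cor:standardformcohomology}. The gap lies in your proof that $\height = 3$, and it sits exactly at the step you single out as ``where the genuine content sits.''

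There are two problems. First, the vanishing $\Hom(E_{a_1},E_{a_0}\otimes\omega_X^{-1})=0$ is not needed. In Kuznetsov's spectral sequence $E_1^{-p,q}\Rightarrow \NHH^{q-p}(\cE,X)$, whose terms are indexed by chains $a_0<\cdots<a_p$ of total $\Ext$-degree $q$, the differentials $d_r\colon E_r^{-p,q}\to E_r^{-p+r,q-r+1}$ are built from composition of adjacent factors: they strictly \emph{decrease} chain length and raise total degree by one. In particular, terms of the same total degree are never joined by any differential, so the length-one terms in cochain degree $3$ that worry you cannot interact with your classes in $E_1^{0,3}\cong\bC^{14}$; they could only enlarge $\NHH^3$, never kill it. For $E_1^{0,3}$ itself, incoming differentials originate in total degree $2$, which vanishes because the pseudoheight is $3$, and outgoing differentials would land in chain length $-1$, which does not exist; hence $E_\infty^{0,3}=E_1^{0,3}\neq 0$ unconditionally. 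This is precisely the content of \cite[Proposition 4.7]{kuznetsov-heights}, which the paper invokes in one line: since the (anticanonical) pseudoheight is attained on a chain of length zero, the height equals the pseudoheight. Your extra vanishing would matter only for the sharper claim $\NHH^3(\cE,X)\cong\bC^{14}$, which the lemma does not assert (and which the paper pointedly avoids; cf.\ Remark~\ref{remark-compare-to-krah}).

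Second, and more seriously, the method you propose for that vanishing would fail. For a pair $(D_i,D_j)$ with $i<j$ the relevant divisor is $D=3H-2E_i-\sum_{k\neq i,j}E_k$, and you claim a ``short check'' shows no $(-1)$-curve meets $D$ in degree $\le -2$. This is false: for any $7$-element subset $B\subset\{1,\dots,11\}\setminus\{i,j\}$, the class $C=6H-3E_i-2\sum_{b\in B}E_b$ satisfies $C^2=-1$ and $C.K_X=-1$ (it is one of the $240$ $(-1)$-curves on the degree-one del Pezzo surface obtained from the eight relevant points, hence an irreducible $(-1)$-curve on $X$), and $C.D=18-6-14=-2$. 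So \cref{theorem:SHGH-conjecture-low-multiplicities} yields no conclusion about $h^0(D)$, and your argument stalls at its crux. The vanishing $h^0(D)=0$ is nevertheless true for general points --- for instance, since $C.D<0$ and $C$ is irreducible, $C$ is a fixed component of $\lvert D\rvert$, and $(D-C).H=-3<0$ forces $h^0(D)=h^0(D-C)=0$ --- but not by the route you propose. In short, the paper's one-line appeal to \cite[Proposition 4.7]{kuznetsov-heights} is what makes the height claim work; your replacement both imposes an unnecessary hypothesis and fails to verify it.
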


\begin{proof}
    This proof is along the lines of the proof of \cite[Proposition 2.B.3]{krah2024phantoms}. First, observe that $\rheight(\cO_X(D_i),\cO_X(D_j)) = \infty$ by \cref{lemma:exts-between-elts-of-exc-coll}. Now, we have that the anticanonical pseudoheight is the minimum of the following numbers
    \begin{align*}
    & \rheight(\cO_X, \cO_X(-K_X)), \\
    & \rheight(\cO_X, \cO_X(D_i)) + \rheight(\cO_X(D_i), \cO_X(-K_X)) - 1, \\
    & \rheight(\cO_X, \cO_X(F)) + \rheight(\cO_X(F), \cO_X(-K_X)) - 1, \\
    & \rheight(\cO_X, \cO_X(2F)) + \rheight(\cO_X(2F), \cO_X(-K_X)) - 1, \\
    & \rheight(\cO_X(D_i), \cO_X(F)) + \rheight(\cO_X(F), \cO_X(D_i - K_X)) - 1, \\
    & \rheight(\cO_X(D_i), \cO_X(2F)) + \rheight(\cO_X(2F), \cO_X(D_i - K_X)) - 1, \\
    & \rheight(\cO_X(F), \cO_X(2F)) + \rheight(\cO_X(2F), \cO_X(F - K_X)) - 1, \\
    & \rheight(\cO_X, \cO_X(D_i)) + \rheight(\cO_X(D_i), \cO_X(F)) + \rheight(\cO_X(F), \cO_X(-K_X)) - 2, \\
    & \rheight(\cO_X, \cO_X(D_i)) + \rheight(\cO_X(D_i), \cO_X(2F)) + \rheight(\cO_X(2F), \cO_X(-K_X)) - 2, \\
    & \rheight(\cO_X, \cO_X(F)) + \rheight(\cO_X(F), \cO_X(2F)) + \rheight(\cO_X(2F), \cO_X(-K_X)) - 2, \\
    & \rheight(\cO_X(D_i), \cO_X(F)) + \rheight(\cO_X(F), \cO_X(2F)) + \rheight(\cO_X(2F), \cO_X(D_i - K_X)) - 2, \\
    & \rheight(\cO_X, \cO_X(D_i)) 
    + \rheight(\cO_X(D_i), \cO_X(F)) 
    + \rheight(\cO_X(F), \cO_X(2F)) 
    + \rheight(\cO_X(2F), \cO_X(-K_X)) 
    - 3.
    \end{align*}
For $M_1,M_2$ any two distinct elements of the collection (\ref{equation:transformed-collection}), we have $\rheight(M_1,M_2) \geq 2$ by Lemma \ref{lemma:exts-between-elts-of-exc-coll}. Consequently, all the numbers in the above list except the first one are at least $1$. Additionally, for the first number, we have $\rheight(\cO_X, \cO_X(-K_X)) = \rheight(\cO_X,\cO_X(3H - \sum_{j=1}^{11} E_j)) = 1$. This is because $\chi(-K_X) = -1$ and $h^0(- K_X) = \max\{0,\chi(-K_X)\} = 0$ by Corollary \ref{Cor:standardformcohomology} since $-K_X$ is in standard form.
Thus, the minimum of the above list of numbers, which is the anticanonical pseudoheight, is $1$. 

Finally, we note that the anticanonical pseudoheight is achieved on a chain of length $0$, i.e. the anticanonical pseudoheight is equal to $\rheight(\cO_X, \cO_X(-K_X))$. This, along with \cite[Proposition 4.7]{kuznetsov-heights}, implies that the height of the exceptional collection is equal to the pseudoheight.
\end{proof}

We can now prove Theorem \ref{theorem:phantom-11-points}.

\begin{proof}[Proof of Theorem~\ref{theorem:phantom-11-points}]
    We must show that $\cC$ is a universal phantom. 
    By Lemma~\ref{L:universal_phantoms}, it suffices to prove that the collection (\ref{equation:transformed-collection}) is exceptional but not full. We have by Lemma \ref{lemma:exc-collection-11-pts} that the collection is exceptional. Then, Lemma \ref{lemma:heights-of-exc-coll} and Lemma \ref{lemma:anticanonical-pseudoheight-3} together show that the collection is not full.
\end{proof}

The proof of the following lemma is in \cite[Lemma 2.B.5]{krah2024phantoms}. We include it here for the convenience of the reader.

\begin{lemma}
\label{L:h1XTX}
    If $X$ is the blowup of $\bP^2$ in $n \ge 8$ general points then $h^1(X,\cT_X) = 2n - 8$.
\end{lemma}

\begin{proof}
    Let $\pi:X\to \bP^2$ denote the blowup morphism and consider the relative cotangent sequence $0\to \pi^*\Omega_{\bP^2}^1 \to \Omega_X^1\to \Omega^1_{X/\bP^2}\to 0$. Note that $\Omega^1_{X/\bP^2} \cong j_*\Omega_{E/Z}^1$, where $j$ is the inclusion of the exceptional divisor $E = \bigcup_{i=1}^n E_i$ and $Z \subset \bP^2$ is the blown-up locus. Dualizing the relative cotangent sequence gives 
    \[
        0 \to \SHom(j_*\Omega_{E/Z}^1,\cO_X) \to \cT_X\to \pi^* \cT_{\bP^2} \to \SExt^1(j_*\Omega_{E/Z}^1,\cO_X)\to 0
    \]
    where the vanishing of the other $\SExt^1$ terms is by \cite[Propositions III.6.3 \& III.6.7]{hartshorne}. Using the resolution $0\to \cO_X(E) \to \cO_X(2E) \to j_*\cO_E(2E) \cong j_*\Omega_{E/Z}^1\to 0$, one computes that $\SHom(j_*\Omega_{E/Z}^1,\cO_X) = 0$ and $\SExt^1(j_*\Omega_{E/Z}^1,\cO_X) \cong j_*\cO_E(-E)= j_*\bigoplus_{i=1}^n \cO_{E_i}(1)$. Thus, we have a short exact sequence:
    \begin{equation}
    \label{E:SESfortangent}
        0\to \cT_X\to \pi^*\cT_{\bP^2} \to j_*\bigoplus_{i=1}^n \cO_{E_i}(1)\to 0.
    \end{equation}
    Next, global sections of $\cT_X$ can be identified with sections of $\cT_{\bP^2}$ vanishing at the points $p_1,\ldots, p_n$ of $Z$. Since we have chosen the points of $Z$ to be generic, this imposes $n$ linearly independent constraints on $\rm{H}^0(\bP^2,\cT_{\bP^2})$, which is of dimension $8$. Consequently, $h^0(X,\cT_X) = 0$. Finally, by \eqref{E:SESfortangent} we have $h^1(X,\cT_X) = h^0\left(X,j_*\bigoplus_{i=1}^n \cO_{E_i}(1)\right) - h^0(\bP^2,\cT_{\bP^2}) = 2n - 8$. 
\end{proof}

\begin{proposition}\label{proposition-HH-11-points}
    Let $\cC$ be the phantom category in Theorem \ref{theorem:phantom-11-points}. Then $\dim \mathrm{HH}^2(\cC) \geq {14}$.
\end{proposition}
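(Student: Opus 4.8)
The plan is to bound $\HH^2(\cC)$ below by $\HH^2(X)$ using the normal Hochschild cohomology, and then to compute $\HH^2(X)$ directly via the Hochschild--Kostant--Rosenberg decomposition. Apply the distinguished triangle~\eqref{equation:normal-hochschild-cohomology} to the semiorthogonal decomposition of Theorem~\ref{theorem:phantom-11-points}, viewed as $\Db(X) = \langle \cC, \cE \rangle$ with $\cC$ the phantom and $\cE = \langle \cO_X, \cO_X(D_1), \dots, \cO_X(2F) \rangle$ the category generated by the exceptional collection~\eqref{equation:transformed-collection}. Taking $\cA = \cC$ and $\cB = \cE$, the triangle yields a long exact sequence
\begin{equation*}
    \cdots \to \NHH^2(\cE, X) \to \HH^2(X) \to \HH^2(\cC) \to \NHH^3(\cE, X) \to \cdots.
\end{equation*}
By Lemma~\ref{lemma:anticanonical-pseudoheight-3}, the height of~\eqref{equation:transformed-collection} equals $3$, so $\NHH^k(\cE, X) = 0$ for every $k < 3$; in particular $\NHH^2(\cE, X) = 0$. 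Hence the restriction morphism $\HH^2(X) \to \HH^2(\cC)$ is injective, and it suffices to show $\dim \HH^2(X) = 14$.

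\textbf{Computing $\HH^2(X)$.} For the second step, I would use the HKR isomorphism, which over $\CC$ gives
\begin{equation*}
    \HH^2(X) \cong H^0(X, \wedge^2 \cT_X) \oplus H^1(X, \cT_X) \oplus H^2(X, \cO_X).
\end{equation*}
The two outer summands vanish. Since $\wedge^2 \cT_X \cong \cO_X(-K_X)$ and $-K_X = 3H - \sum_{j=1}^{11} E_j$ satisfies $h^0(-K_X) = 0$ (as already observed in the proof of Lemma~\ref{lemma:anticanonical-pseudoheight-3}), the first summand is zero; and $h^2(\cO_X) = h^0(\omega_X) = 0$ by Serre duality because $X$ is rational, so the last summand is zero. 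The middle summand is computed by Lemma~\ref{L:h1XTX} with $n = 11$, giving $h^1(X, \cT_X) = 2\cdot 11 - 8 = 14$. Therefore $\dim \HH^2(X) = 14$, and combining with the injectivity above gives $\dim \HH^2(\cC) \geq 14$.

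\textbf{Main obstacle.} Most of the substantive input is already supplied by the earlier results (the height computation in Lemma~\ref{lemma:anticanonical-pseudoheight-3} and the tangent cohomology computation in Lemma~\ref{L:h1XTX}), so I expect no single hard step. The main points requiring care are purely formal: matching the assignment $\cA = \cC$, $\cB = \cE$ correctly to the triangle~\eqref{equation:normal-hochschild-cohomology} so that the fiber measuring the failure of fullness is the one whose height is controlled, and tracking the degree shift in the long exact sequence so that vanishing of $\NHH$ below degree $3$ gives injectivity exactly in degree $2$. Note that injectivity yields only the inequality $\dim \HH^2(\cC) \geq 14$; pinning down the exact value would additionally require understanding the connecting map into $\NHH^3(\cE, X)$, which is presumably why the statement is phrased as a lower bound.
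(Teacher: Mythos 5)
Your proof is correct and follows essentially the same route as the paper: vanishing of $\NHH^2(\cE,X)$ from the height computation in Lemma~\ref{lemma:anticanonical-pseudoheight-3}, injectivity of $\HH^2(X)\hookrightarrow \HH^2(\cC)$ from the long exact sequence of~\eqref{equation:normal-hochschild-cohomology}, and the identification $\HH^2(X)\cong \mathrm{H}^1(X,\cT_X)\cong \CC^{14}$ via HKR and Lemma~\ref{L:h1XTX}. The only difference is cosmetic: you spell out the vanishing of the outer HKR summands $\mathrm{H}^0(X,\wedge^2\cT_X)$ and $\mathrm{H}^2(X,\cO_X)$, which the paper leaves implicit.
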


\begin{proof}
    By Lemma \ref{lemma:anticanonical-pseudoheight-3}, the height of the exceptional collection (\ref{equation:transformed-collection}) is $3$. 
    Thus, $\NHH^2(\cE,X) = 0$, where $\cE \subset \Db(X)$ denotes the subcategory generated by the exceptional collection~\eqref{equation:transformed-collection} defining the phantom $\cC$. So, we have the following exact sequence coming from the exact triangle (\ref{equation:normal-hochschild-cohomology}):
    \begin{equation*} 
        \begin{tikzcd}
            0 \arrow[r]& \HH^2(X) \arrow[r, hook]& \HH^2(\cC) \arrow[r] & \NHH^3(\cE,X)\\
        &\mathrm{H}^1(X,\cT_X)\arrow[u,equal,"\rm{HKR}"]\arrow[ur,hook] \ar[r, dash, "\sim"]& \mathbb{C}^{14}&
        \end{tikzcd}
    \end{equation*}
    where ``HKR'' denotes the Hochschild--Kostant--Rosenberg decomposition \cite[p. 140]{HuybrechtsFM} and we have applied Lemma \ref{L:h1XTX} for the bottom isomorphism. 
\end{proof}

\begin{remark}[Comparison with Krah's example]
\label{remark-compare-to-krah}
    Let $X'$ denote the blowup of $\bP^2$ at $10$ points in general position. 
    Krah's result says that there is a semiorthogonal decomposition 
    \begin{equation*}
        \Db(X') = \langle \cC', \cO_{X'}, \cO_{X'}(D'_1), \dots, \cO_{X'}(D'_{10}), \cO_{X'}(F'), \cO_{X'}(2F') \rangle, 
    \end{equation*}
    where the $D'_i$ and $F'$ are certain divisors defined in terms of the hyperplane class and the exceptional divisors by
    similar, but slightly different, formulas to~\eqref{Di-F}. Let $\pi \colon X \to X'$ be the blowup of $X'$ at a point $p \in X$. 
    By Orlov's blowup formula, we obtain a semiorthogonal decomposition 
    \begin{equation}
    \label{E:SOD11points}
        \Db(X) = \langle \cC', \cO_{X}, \cO_{X}(D'_1), \dots, \cO_{X}(D'_{10}), \cO_{X}(F'), \cO_{X}(2F'), \cO_E \rangle , 
    \end{equation}
    where $E$ denotes the exceptional divisor over $p$ and we have still denoted by $D'_i$ and $F'$ their pullbacks to $X$. This shows that the phantom category $\cC'$ embeds in $\Db(X)$ as an admissible subcategory.

    On the other hand, when $p \in X$ is general, Theorem~\ref{theorem:phantom-11-points} also gives a phantom $\cC \subset \Db(X)$. Like $\cC'$, the subcategory $\cC$ is defined as the right orthogonal subcategory to a collection of $13$ exceptional objects. 
    Despite what this might suggest, the categories $\cC$ and $\cC'$ are \emph{not} equivalent. 
    Indeed, the second Hochschild cohomology of these categories differs: $\dim \HH^2(\cC) \geq 14$ by Proposition~\ref{proposition-HH-11-points}, whereas 
    $\dim \HH^2(\cC') = 12$ by \cite[Remark 5.5]{krah}. 
    In particular, $\cC$ is a new phantom. 
    
    In contrast to \cite[Remark 5.5]{krah}, where using these techniques Krah obtains precise values of $\dim \HH^i(\cC')$ for all $i\ge 0$, we can only deduce a lower bound on $\dim \HH^2(\cC)$ in Proposition \ref{proposition-HH-11-points} since Lemma \ref{lemma:anticanonical-pseudoheight-3} does not give vanishing of $\NHH^3(\cE,X)$ (where $\cE$ is the subcategory generated by the exceptional sequence defining the phantom).

Let us also note the following question suggested by the above observations:  
Does the blowup $X_d$ of $\bP^2$ in $d \geq 11$ general points always support a phantom subcategory $\cC_d \subset \Db(X_d)$ such that $\dim \HH^2(\cC_d) = \dim \HH^2(X_d) = 2d-8$.
\end{remark}

\subsection{The blowup of \texorpdfstring{$\bP^2$}{P2} at 10 points: a different reflection}
\label{sec: Bl_10(P2)}

Let $X$ be the blowup of $\bP^2$ at $10$ points in general position.
In this section, we perform a different reflection from the one described in \cite{krah} and show that we still get a phantom category, although we do not know whether this phantom category is equivalent to Krah's.

As above, we start with the full exceptional collection
\[  
    \Db(X) = \langle \cO_X, \cO_X(E_1),\dots, \cO_X(E_{10}),\cO_X(H),\cO_X(2H) \rangle .
\]
This gives us a sequence of divisors $(0,E_1,\dots,E_{10},H,2H)$ in $\Pic X$, to which we will apply a reflection $\iota \colon \Pic X \to \Pic X$ that fixes the canonical divisor $K_X$.
We recall that in \cite{krah}, the reflection was taken to be $D \mapsto -D-2(D.K_X)K_X$.
In our case, we will define an orthogonal projection $P$ onto a plane $\langle v,w \rangle$ and take $\iota(D) \coloneqq D - 2P(D)$, where the pairing is taken with respect to the intersection product.
We choose the following vectors: 

\begin{align*}
    v &\coloneqq -3H + 2(E_1 + E_2) + (E_3+E_4+E_5+E_6+E_7),
    \\
    w &\coloneqq -H + (E_8+E_9+E_{10}).
\end{align*}
With this, we can compute the images of the divisors from the full exceptional collection:
\begin{align*}
    F &\coloneqq \iota(H) = H+18v+26w,
    \\
    D_i &\coloneqq \iota(E_i) =
    \begin{cases}
        E_i + 8v+12w &\text{ for } i=1,2; \\
        E_i + 4v+6w &\text{ for } i=3,\dots,7; \\
        E_i + 6v+8w &\text{ for } i=8,9,10.
    \end{cases}
\end{align*}

\begin{theorem}
\label{theorem:alternatereflection}
    The sequence of line bundles
    \begin{equation}
    \label{reflected EC: Bl_10(P2)}
        \langle \mathcal{O}_X,\mathcal{O}_X(D_1),\ldots,\mathcal{O}_X(D_{10}),
        \cO(F), \cO(2F) \rangle
    \end{equation}
    is exceptional but not full. 
    Moreover, there is a semiorthogonal decomposition 
    \begin{equation*}
        \Db(X) = \langle \cC, \cO_X, \cO_X(D_1), \dots, \cO_X(D_{10}), \cO_X(F), \cO_X(2F) \rangle, 
    \end{equation*}
    where the category $\cC$ is a universal phantom. 
\end{theorem}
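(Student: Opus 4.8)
The plan is to run the same three-step argument used for Theorem~\ref{theorem:phantom-11-points} and then invoke Lemma~\ref{L:universal_phantoms}: once we know that \eqref{reflected EC: Bl_10(P2)} is an honest exceptional collection of length $13 = \rk \rK_0(X)$ that is not full, its right orthogonal $\cC$ is automatically a nonzero universal phantom. The preliminary task is to check that $\iota = \mathrm{id} - 2P$ is a genuine isometric involution of the lattice $\Pic(X)$ fixing $K_X$. First I would verify $v.K_X = w.K_X = 0$, so that $P(K_X) = 0$ and hence $\iota(K_X) = K_X$; next observe $\iota^2 = \mathrm{id}$ since $P^2 = P$; and finally note that the Gram matrix $\left(\begin{smallmatrix} v^2 & v.w \\ v.w & w^2 \end{smallmatrix}\right)=\left(\begin{smallmatrix} -4 & 3 \\ 3 & -2 \end{smallmatrix}\right)$ has determinant $-1$, so $2P$, and therefore $\iota$, preserves the integral lattice $\Pic(X)$ --- this plays the role of the condition $\lvert K_X^2\rvert \le 2$ in the previous construction. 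With $\iota$ an isometry fixing $K_X$, the Riemann--Roch computation \eqref{E:RRsurface} applies verbatim and shows that \eqref{reflected EC: Bl_10(P2)} is numerically exceptional of the correct length.

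For Step~2 (honest exceptionality) I would reduce, exactly as in the proof of Theorem~\ref{theorem:phantom-11-points}, to the vanishing of the appropriate $\hom$ and $\ext^2$ groups: numerical exceptionality together with $\chi = \hom - \ext^1 + \ext^2$ forces $\ext^1$ to vanish once $\hom$ and $\ext^2$ do. Writing the reflected divisors as $L'_1 = 0, L'_2 = D_1, \dots, L'_{13} = 2F$, Serre duality turns these into the statements $h^0(L'_j - L'_i) = 0$ and $h^0(K_X + L'_i - L'_j) = 0$ for the descending pairs $i > j$. Step~3 (non-fullness) reduces, via Lemma~\ref{lemma:heights-of-exc-coll}, to showing that the anticanonical pseudoheight exceeds $-\dim X = -2$. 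Since every relative height is nonnegative, it suffices to show the ascending Homs vanish, i.e. $h^0(L'_j - L'_i) = 0$ for $i < j$: this forces each $\rheight(\cO_X(L'_i), \cO_X(L'_j))$ with $i < j$ to be at least $1$, and then the formula in Definition~\ref{definition: heights} gives $\psheight_{\mathrm{ac}} \ge 0 > -2$ (a chain of length $p \ge 1$ contributes at least $p\cdot 1 + 0 - p = 0$, using that the closing relative height is $\ge 0$, and a length-$0$ chain contributes $\rheight(\cO_X(L'_a), \cO_X(L'_a - K_X)) \ge 0$). Thus both remaining steps amount to computing $h^0$ of the pairwise differences $L'_j - L'_i$ and of the twists $K_X + L'_i - L'_j$.

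The main obstacle is that, unlike the $11$-point case, these difference divisors are not in standard form: expanding the formulas for $F$ and the $D_i$ produces divisors with multiplicities as large as $36$, far outside the range $0 \le m_i \le 11$ to which the low-multiplicity SHGH result Theorem~\ref{theorem:SHGH-conjecture-low-multiplicities}, and hence Corollary~\ref{Cor:standardformcohomology}, applies. I would therefore split the required vanishings into two families. The \emph{easy} differences --- those pairing negatively with the nef class $H$, or otherwise visibly (anti)effective --- are dispatched by elementary arguments as before. The remaining high-multiplicity divisors cannot be treated on the nose; for these I would exhibit an explicit configuration of $10$ points over a finite field $\FF_q$ for which Macaulay2 verifies the relevant cohomology vanishing (Appendix~\ref{app: mac}), and then invoke the semicontinuity results of Appendix~\ref{sec: semic} to transport the vanishing to the generic configuration in characteristic zero. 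Assembling these vanishings establishes that \eqref{reflected EC: Bl_10(P2)} is exceptional (Step~2) and not full (Step~3), and Lemma~\ref{L:universal_phantoms} then yields the claimed semiorthogonal decomposition with $\cC$ a universal phantom. The delicate points are guaranteeing that the chosen $\FF_q$-configuration is general enough for semicontinuity to deliver the generic characteristic-zero statement, and organizing the bookkeeping so that every descending and every ascending difference is accounted for.
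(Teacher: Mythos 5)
Your proposal is correct and follows essentially the same route as the paper's own proof: reduce to numerical exceptionality via Riemann--Roch and the isometry $\iota$, verify the required $\hom$ and $\ext^2$ vanishings for $i>j$ and the ascending $\hom$ vanishings for $i<j$ by a combination of degree (nefness of $H$) arguments and Macaulay2 computations over a finite field transported to characteristic zero by semicontinuity, and conclude non-fullness via pseudoheight (Lemma~\ref{lemma:heights-of-exc-coll}) and universality via Lemma~\ref{L:universal_phantoms}. Your explicit check that $\iota$ is an integral isometric involution fixing $K_X$ (via the unimodular Gram matrix of $\langle v,w\rangle$) is a detail the paper leaves implicit in its setup, and your observation that the large multiplicities (up to $36$) rule out the SHGH/standard-form corollary is precisely why the paper resorts to the Macaulay2 computations recorded in Table~\ref{table: Bl_10(P2)}.
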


\begin{proof}
    As in the proof of Theorem~\ref{theorem:phantom-11-points} above, the second claim follows from the first. 
    We will denote the objects in the exceptional collection as follows:
    \begin{equation}
    \label{reflected EC: Bl_10(P2)abr}
        \langle \cL_1, \dots, \cL_{13} \rangle.
    \end{equation}
    By the Riemann-Roch theorem on surfaces and properties of $\iota$, the sequence is numerically exceptional.
    Therefore, for exceptionality, it is enough to verify that $\Hom(\cL_i,\cL_j)$ and $\Ext^2(\cL_i,\cL_j) \cong \Hom(\cL_j,\cL_i(K_X))$ vanish for $i>j$.
    Further, verifying that $\Hom(\cL_i,\cL_j)=0$ for all $i<j$ will show that the pseudoheight of this collection is at least 1, from which we can conclude that the collection is not full. 
    
    We claim that the global sections of all the relevant line bundles vanish, reasons for which are outlined in \cref{table: Bl_10(P2)}.
    To simplify the notation, we write $E_1' \coloneqq E_1 + E_2$, $E_3'\coloneqq E_3 + \cdots + E_7$, and $E_8' \coloneqq E_8+E_9+E_{10}$.    
\end{proof}


\section{Phantoms on blowups of Hirzebruch surfaces} 

\subsection{The blowup of \texorpdfstring{$\bF_2$}{F2} at 9 points}
\label{sec: f2}

In this section, we prove Theorem~\ref{theorem:phantom-f2-9-points}. 
We work in the setting of the following diagram:
\begin{equation}
\label{E:maps}
    \begin{tikzcd}
        & & \bP^2\\
        \bF_2=V(x_0^2y_1-x_1^2y_0)\arrow[hook, "j"]{r}\arrow[bend left, "\phi_2"]{urr}\arrow[bend right, "\phi_1"]{drr}& \bP^1\times \bP^2\arrow["\pi_2"]{ur}\arrow["\pi_1"]{dr} & \\
        & & \bP^1
    \end{tikzcd}
\end{equation}
Here, we let $x_0,x_1$ be the homogeneous coordinates of $\bP^1$, and $y_0,y_1,y_2$ the homogeneous coordinates of $\bP^2$. The composition $\pi_1\circ j=\phi_1$ realizes the Hirzebruch surface $\bF_2$ as a $\bP^1$-bundle isomorphic to $\bP(\mathcal{O}_{\bP^1}\oplus \mathcal{O}_{\bP^1}(2))$ over $\bP^1$.

We write $C$ for the divisor on $\bF_2$ given by the intersection
\[
    C = \bF_2\cap (\bP^1\times V(y_0,y_1)).
\] 
There is a section of $\phi_1$ with image equal to $C$ given by sending $[x_0:x_1]$ to $([x_0:x_1],[0:0:1])$. Let $F$ be the divisor class of a fiber of $\phi_1$. The divisor class group $\mathrm{Cl}(\bF_2)$ is freely generated by the classes of $C$ and $F$. Moreover, we have 
\[
    \phi_1^*\mathcal{O}_{\bP^1}(1)=\mathcal{O}_{\bF_2}(F)\quad \mbox{and}\quad \phi_2^*\mathcal{O}_{\bP^2}(1)=\mathcal{O}_{\bF_2}(C+2F).
\] 
Thus, regarding $\bF_2$ as a $\bP^1$-bundle, we have $\mathcal{O}_{\bF_2/\bP^1}(1)\cong \mathcal{O}_{\bF_2}(C+2F)$.

By Orlov's projective bundle formula \cite[Corollary 2.7]{MR1208153}, the derived category of $\bF_2$ admits a full exceptional collection 
\[
    \Db(\bF_2)=\langle \mathcal{O}_{\bF_2}, \mathcal{O}_{\bF_2}(F), \mathcal{O}_{\bF_2}(C+2F),\mathcal{O}_{\bF_2}(C+3F)\rangle.
\] 
Let $b:Y \to \bF_2$ be the blowup at a point. Write $s:E\rightarrow Y$ for the inclusion of the exceptional divisor of this blowup. By Orlov's blowup formula \cite[Theorem 4.3]{MR1208153}, there is a full exceptional collection 
\[
    \Db(Y)=\langle s_{*}\mathcal{O}_{E}(-1),\mathcal{O}_{Y},b^*\mathcal{O}_{\bF_2}(F),b^*\mathcal{O}_{\bF_2}(C+2F),b^*\mathcal{O}_{\bF_2}(C+3F)\rangle.
\] 

Applying a right mutation of the first object by the second yields the exceptional collection 
\[
    \Db(Y)=\langle \mathcal{O}_{Y},\mathcal{O}_{Y}(E),b^*\mathcal{O}_{\bF_2}(F),b^*\mathcal{O}_{\bF_2}(C+2F),b^*\mathcal{O}_{\bF_2}(C+3F)\rangle.
\]

Now suppose that $b:X\rightarrow \bF_2$ is the blowup of $\bF_2$ at $9$ sufficiently general points $p_1,\ldots,p_9$. Write $E_i$ for the exceptional divisor in $X$ over the point $p_i$ for all $i=1,\ldots,9$. The Picard group $\mathrm{Pic}(X)$ of $X$ is freely generated by the line bundles 
\[
    \mathcal{O}_X(C) \coloneqq b^*\mathcal{O}_{\bF_2}(C),\quad \mathcal{O}_X(F) \coloneqq b^*\mathcal{O}_{\bF_2}(F),\quad \mbox{and}\quad \mathcal{O}_X(E_i)
\] 
for $i=1,\ldots,9$.\footnote{Note that by a mild abuse of notation, we use $C$ and $F$ to refer to divisors on both $\bF_2$ and $X$.} The intersection numbers of the associated divisors are as follows:
\begin{equation}
\begin{gathered}
C^2=-2,\quad C\cdot F=1,\quad F^2=0\\
C\cdot E_i=0,\quad  F\cdot E_i=0\quad E_i^2=-1,\quad\mbox{and}\quad E_i\cdot E_j=0\quad\text{if $i\neq j$.}
\end{gathered}
\end{equation}
We obtain a full exceptional collection for the blowup $X$ of $\bF_2$ at $p_1,\ldots,p_9$: 
\begin{equation}\label{eq:f2except} 
\Db(X) = \langle \cO_X, \cO_X(E_1),\ldots,\cO_X(E_9),\cO_X(F),\cO_X(C+2F),\cO_X(C+3F)\rangle.
\end{equation}

A canonical divisor on $X$ has class 
\begin{equation}
\label{KX-blowup-F2}
    K_X=-2C-4F+\sum_{i=1}^9E_i
\end{equation}
so that $K_X^2=-1$. Thus, as before, we can consider the isometric involution of $\Pic(X)$ given by the negative of the reflection about $K_X$:
\begin{align*}
    \iota : \Pic(X) &\to \Pic(X)\\
    D &\mapsto -D + 2 \left( \frac{D.K_X}{K_X^2} \right) K_X = -D -2(D.K_X) K_X.
\end{align*}
As in Section \ref{S:p211points}, applying $\iota$ to the exceptional collection \eqref{eq:f2except} produces a collection of sheaves
\begin{equation}
\label{E:ECF2at9points}
\mathcal{O}_X,\mathcal{O}_X(D_1),\ldots,\mathcal{O}_X(D_9),\mathcal{O}_X(G),\mathcal{O}_X(S+2G),\mathcal{O}_X(S+3G),
\end{equation}
which is numerically exceptional by Riemann--Roch, where 
\begin{equation}
\label{eq:f2phantomex}
    D_i\coloneqq\iota(E_i)=2K_X-E_i ,\quad G\coloneqq\iota(F)=-F+4K_X, \text{ and}\quad S\coloneqq\iota(C)=-C.
\end{equation}
Using~\eqref{KX-blowup-F2}, we see that these divisors coincide with the ones defined in the statement of Theorem~\ref{theorem:phantom-f2-9-points}.

\begin{proof}[Proof of Theorem~\ref{theorem:phantom-f2-9-points}]
As in the proof of Theorem~\ref{theorem:phantom-11-points} in \S\ref{S:p211points}, it suffices to prove that the sequence~\eqref{E:ECF2at9points} is exceptional but not full. 

To show that \eqref{E:ECF2at9points} is an exceptional collection, it suffices to check that the appropriate $\Hom$ and $\Ext^2$ groups vanish since the collection is numerically exceptional. Abbreviating $\dim \Hom$ as $\hom$ and $\dim \Ext^2$ as $\ext^2$, we need to show that the following numbers are zero:
\begin{align*}
    \hom(\cO_X(S+3G),\cO_X(S+2G)) &= h^0(-G),\\
    \ext^2(\cO_X(S+3G),\cO_X(S+2G)) &= h^2(-G) = h^0(K_X + G),\\
    \hom(\cO_X(S+3G),\cO_X(G)) &= h^0(-S-2G),\\
    \ext^2(\cO_X(S+3G),\cO_X(G)) &= h^2(-S-2G) = h^0(K_X+S+2G),\\
    \hom(\cO_X(S+3G),\cO_X(D_i)) &= h^0(D_i-S-3G),\\
    \ext^2(\cO_X(S+3G),\cO_X(D_i)) &= h^2(D_i-S-3G) = h^0(K_X-D_i+S+3G),\\
    \hom(\cO_X(S+3G),\cO_X) &= h^0(-S-3G),\\
    \ext^2(\cO_X(S+3G),\cO_X) &= h^2(-S-3G) = h^0(K_X + S+3G),\\
    \hom(\cO_X(S+2G),\cO_X(G)) &= h^0(-S-G),\\
    \ext^2(\cO_X(S+2G),\cO_X(G)) &= h^2(-S-G) = h^0(K_X + S+G),\\
    \hom(\cO_X(S+2G),\cO_X(D_i)) &= h^0(D_i-S-2G),\\
    \ext^2(\cO_X(S+2G),\cO_X(D_i)) &= h^2(D_i-S-2G) = h^0(K_X-D_i+S+2G),\\
    \hom(\cO_X(S+2G),\cO_X) &= h^0(-S-2G),\\
    \ext^2(\cO_X(S+2G),\cO_X) &= h^2(-S-2G) = h^0(K_X + S+2G),\\
    \hom(\cO_X(G),\cO_X(D_i)) &= h^0(D_i-G),\\
    \ext^2(\cO_X(G),\cO_X(D_i)) &= h^2(D_i-G) = h^0(K_X -D_i+G),\\
    \hom(\cO_X(G),\cO_X) &= h^0(-G),\\
    \ext^2(\cO_X(G),\cO_X) &= h^2(-G) = h^0(K_X + G),\\
    \hom(\cO_X(D_i),\cO_X(D_j)) &= h^0(D_j-D_i),\\
    \ext^2(\cO_X(D_i),\cO_X(D_j)) &= h^2(D_j-D_i) = h^0(K_X-D_j+D_i),\\
    \hom(\cO_X(D_i),\cO_X) &= h^0(-D_i),\\
    \ext^2(\cO_X(D_i),\cO_X) &= h^2(-D_i) = h^0(K_X+D_i),
\end{align*}
where $1 \leq i,j \leq 9$, and $i \neq j$. 

To show that \eqref{E:ECF2at9points} is not full, we need to show that the anticanonical pseudoheight of (\ref{equation:transformed-collection}) is at least $-1$. This boils down to showing that the following dimensions are zero:
\begin{align*}
    \hom(\cO_X,\cO_X(D_i)) &= h^0(D_i),\\
    \hom(\cO_X,\cO_X(G)) &= h^0(G),\\
    \hom(\cO_X,\cO_X(S+2G)) &= h^0(S+2G),\\
    \hom(\cO_X,\cO_X(S+3G)) &= h^0(S+3G),\\
    \hom(\cO_X(D_i),\cO_X(D_j)) &= h^0(D_j-D_i),\\
    \hom(\cO_X(D_i),\cO_X(G)) &= h^0(G-D_i),\\
    \hom(\cO_X(D_i),\cO_X(S+2G)) &= h^0(S+2G-D_i),\\
    \hom(\cO_X(D_i),\cO_X(S+3G)) &= h^0(S+3G-D_i),\\
    \hom(\cO_X(G),\cO_X(S+2G)) &= h^0(S+G),\\
    \hom(\cO_X(G),\cO_X(S+3G)) &= h^0(S+2G),\\
    \hom(\cO_X(S+2G),\cO_X(S+3G)) &= h^0(G),
\end{align*}
where $1 \leq i,j \leq 9$, and $i \neq j$. 

Table \ref{table: f2} provides a list of all of the divisors above, barring repeats, as well as a justification for the vanishing of the associated linear system. There are three possible justifications provided:\vspace{3mm}

\begin{itemize}[leftmargin=*]
\item \textit{Degree}: If a divisor $D$ satisfies $(C+3F).D<0$, then $\mathrm{H}^0(X,\mathcal{O}_X(D)) = 0$ since $C+3F$ is nef.\vspace{3mm}
\item \textit{Exceptional}: The vanishing $h^0(E_i-E_j) = 0$ follows from the ideal sequence of $E_j$.\vspace{3mm}
\item \textit{Macaulay2}: The dimension of the cohomology group was computed in Macaulay2 as explained in Appendix \ref{app: mac}. \vspace{3mm}
\end{itemize}
Since all of the necessary cohomology groups vanish, it follows that \eqref{E:ECF2at9points} is exceptional but not full, as claimed.
\end{proof}

In the remainder of the section, we prove some elementary properties about the anticanonical linear system of $\bF_n$, which we use to compute the dimension of $\HH^2(\cC)$, where $\cC$ is the phantom constructed in Theorem \ref{theorem:phantom-f2-9-points}.

\begin{lemma}
    Let $n\ge 0$ be given and consider the Hirzebruch surface $\bF_n$. In this case, $h^0(\bF_n,-K_{\bF_n}) = 6 + \max\{3,n\}$.
\end{lemma}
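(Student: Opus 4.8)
The plan is to reduce everything to line bundles on $\bP^1$ by exploiting the ruling $\pi\colon \bF_n \to \bP^1$ together with the negative section. First I would record the two inputs: that $-K_{\bF_n} = 2C + (n+2)F$, where $C$ is the section with $C^2 = -n$ and $F$ is a fiber, with $C\cdot F = 1$ and $F^2 = 0$; this follows from adjunction on $C$ and on $F$ (or is cross-checked by $K_{\bF_n}^2 = 8$). As a sanity check, Riemann--Roch gives $\chi(-K_{\bF_n}) = 1 + K_{\bF_n}^2 = 9$, which already equals the claimed value when $n \le 3$ and falls short of it when $n > 3$; so the content of the lemma is really that $h^0 = 9$ with no higher cohomology for small $n$, while an $H^1$ of dimension $n-3$ appears once $n > 3$. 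Rather than track $H^1$ directly, I would compute $h^0$ uniformly by peeling off copies of $C$.

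Since $C \cong \bP^1$ is an effective divisor, for any divisor $D$ there is a short exact sequence
\[
0 \to \cO_{\bF_n}(D - C) \to \cO_{\bF_n}(D) \to \cO_C(D\cdot C) \to 0,
\]
with $\cO_C(D\cdot C) \cong \cO_{\bP^1}(D\cdot C)$. I would apply this first with $D = 2C + (n+2)F$ (so $D\cdot C = 2-n$) and then with $D = C + (n+2)F$ (so $D\cdot C = 2$), which reduces the whole computation to $\cO_{\bF_n}((n+2)F)$, restrictions to $C$, and their first cohomology.

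The two facts I need are: $H^0(\bF_n, \cO((n+2)F)) \cong H^0(\bP^1, \cO(n+2))$ has dimension $n+3$, and $H^1(\bF_n, \cO(kF)) = 0$ for $k \ge -1$ --- both via the Leray spectral sequence for $\pi$, using $\pi_*\cO = \cO_{\bP^1}$ and $R^1\pi_*\cO = 0$ --- together with $H^1(\bP^1, \cO(d)) = 0$ for $d \ge -1$. These vanishings force the connecting maps in both long exact sequences to vanish, so $h^0$ is additive along each sequence. Concretely, the second sequence gives $h^0(C + (n+2)F) = h^0((n+2)F) + h^0(\cO_{\bP^1}(2)) = (n+3) + 3 = n+6$ and $H^1(C+(n+2)F) = 0$, and then the first sequence gives
\[
h^0(-K_{\bF_n}) = h^0(C + (n+2)F) + h^0(\cO_{\bP^1}(2-n)) = (n+6) + \max(0,\,3-n) = 6 + \max(3,n).
\]

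The only subtlety --- and really the whole point --- is the restriction term $\cO_C(2-n)$ on the negative section, whose $h^0 = \max(0, 3-n)$ is exactly what produces the $\max\{3,n\}$ and the change of behavior at $n = 3$. The one thing to be careful about is justifying that the relevant $H^1$ groups vanish so that the sequences are additive on $H^0$; since every intermediate line bundle restricts to degree $\ge -1$ on both $C$ and the fibers, this is routine and there is no genuine obstruction.
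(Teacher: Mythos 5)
Your proof is correct, but it takes a different route from the paper's. The paper computes the full pushforward in one step: since $\omega_{\bF_n}^\vee \cong \cO_{\bF_n}(2C)\otimes p^*\cO_{\bP^1}(n+2)$, the projection formula together with Hartshorne's formula $p_*\cO_{\bF_n}(2C) = \mathrm{Sym}^2(\cO\oplus\cO(-n))$ gives $p_*(\omega_{\bF_n}^\vee) \cong \cO(n+2)\oplus\cO(2)\oplus\cO(2-n)$, and the count of global sections follows immediately --- no higher cohomology enters at all, because $H^0$ always commutes with pushforward. Your d\'evissage along the negative section $C$ produces exactly the same three degrees $n+2$, $2$, $2-n$ (your two short exact sequences are in effect the filtration whose graded pieces are the summands of $\mathrm{Sym}^2$), but you assemble $h^0$ through long exact sequences, which forces you to verify the vanishing of $H^1(\bF_n,(n+2)F)$ and $H^1(\bF_n, C+(n+2)F)$ so that the sequences are additive on global sections. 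What your approach buys is self-containedness: you only need the restriction sequence, Leray for line bundles pulled back from $\bP^1$, and cohomology of line bundles on $\bP^1$, rather than quoting the symmetric-power description of pushforwards from a projective bundle. What the paper's approach buys is brevity and the complete avoidance of any $H^1$ bookkeeping. Your cohomological checks are all correct (the needed vanishings do hold, and your side remark that $h^1(-K_{\bF_n}) = \max\{0,n-3\}$ is consistent with $\chi = 9$ and $h^2 = 0$), so this is a valid alternative proof.
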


\begin{proof}
    We consider the projection $p:\bF_n \to \bP^1$. Since $\omega_{\bF_n}^\vee \cong \cO_{\bF_n}(2C) \otimes p^*\cO(n+2)$, the projection formula gives $p_*(\omega_{\bF_n}^\vee) \cong p_*\cO_{\bF_n}(2C)\otimes \cO(n+2)$. On the other hand, by \cite[V.2.8.1]{hartshorne} we know that $p_*\cO_{\bF_n}(2C) = \mathrm{Sym}^2(\cO\oplus \cO(-n)) = \cO \oplus \cO(-n)\oplus \cO(-2n)$. Thus, $p_*(\omega_{\bF_n}^\vee) \cong \cO(n+2)\oplus \cO(2) \oplus \cO(2-n)$ and the result follows.
\end{proof}

In what follows, we put $d(n) = 6 + \max\{3,n\}$.

\begin{lemma}
\label{L:generalpointvanishing}
    One can choose points $p_1,\ldots, p_{d(n)}$ on $\bF_n$ in sufficiently general position such that the surface $\pi:X \to \bF_n$ resulting from blowing up at these points has $\lvert -K_X\rvert =\varnothing$. 
\end{lemma}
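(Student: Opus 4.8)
The plan is to reduce the emptiness of $|-K_X|$ to a general-position statement about the anticanonical system of $\bF_n$ itself. Writing $\pi\colon X\to\bF_n$ for the blowup at $p_1,\dots,p_{d(n)}$, we have $K_X=\pi^*K_{\bF_n}+\sum_{i=1}^{d(n)}E_i$ and hence $-K_X=\pi^*(-K_{\bF_n})-\sum_i E_i$. Since the $p_i$ are distinct and appear with multiplicity one, pushing forward along $\pi$ gives
\[
    \mathrm{H}^0(X,-K_X)\;\cong\;\{\,s\in \mathrm{H}^0(\bF_n,-K_{\bF_n}) : s(p_i)=0 \text{ for all } i\,\},
\]
so it suffices to show that no nonzero anticanonical section of $\bF_n$ vanishes at all $d(n)$ general points.

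Next I would record the fixed/moving decomposition of $|-K_{\bF_n}|$. Writing $C$ for the section with $C^2=-n$ and $F$ for the fiber, we have $-K_{\bF_n}=2C+(n+2)F$ and $C\cdot(-K_{\bF_n})=2-n$. For $n\le 2$ this is nonnegative and $-K_{\bF_n}$ is already nef; for $n\ge 3$ it is negative, so $C$ is a fixed component and the moving part is $M\coloneqq -K_{\bF_n}-C=C+(n+2)F$. In all cases $M$ is a nef divisor on the toric surface $\bF_n$, hence base-point free, and multiplication by the fixed section identifies $\mathrm{H}^0(\bF_n,M)\xrightarrow{\sim}\mathrm{H}^0(\bF_n,-K_{\bF_n})$, so $h^0(\bF_n,M)=d(n)$ by the preceding lemma. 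Taking the $p_i$ off the fixed locus (a single curve, or empty), a section of $-K_{\bF_n}$ vanishes at the $p_i$ if and only if its moving factor in $\mathrm{H}^0(M)$ does. Thus the problem becomes: $d(n)$ general points impose independent conditions on the base-point-free system $|M|$, which has $h^0(M)=d(n)$.

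This last step I would prove by the standard one-point-at-a-time argument. Because $|M|$ is base-point free, some section is nonzero at any prescribed point, so a first general point drops $h^0$ by exactly one; inductively, once general $p_1,\dots,p_k$ with $k<d(n)$ are fixed, the nonzero sections vanishing at them have a common zero locus that is a proper closed subset of $\bF_n$, so a general $p_{k+1}$ lies off it and again drops $h^0$ by one. After $d(n)=h^0(M)$ general points no nonzero section survives. Equivalently, $|M|$ defines a nondegenerate morphism $\bF_n\to\PP^{d(n)-1}$, and $d(n)$ general points of a nondegenerate variety are linearly independent. Combined with the first paragraph this yields $\mathrm{H}^0(X,-K_X)=0$, i.e.\ $|-K_X|=\varnothing$.

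The step I expect to require the most care is the fixed component $C$ for $n\ge 3$: one must pass to the moving part $M$ before invoking base-point-freeness, and the whole count hinges on the two facts that $M$ is base-point free (true since $M$ is nef on a toric surface) and that $h^0(M)=h^0(-K_{\bF_n})=d(n)$ exactly (immediate from the fixed/moving decomposition and the preceding lemma). It is precisely this equality of the number of points with $h^0(M)$ that forces the space of sections to zero, rather than merely to a positive-dimensional residual system.
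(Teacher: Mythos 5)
Your proposal is correct, and its skeleton is the same as the paper's: identify $\mathrm{H}^0(X,-K_X)$ with the sections of $-K_{\bF_n}$ vanishing at the $d(n)$ points, then show that $d(n)$ general points impose independent conditions on the $d(n)$-dimensional space $\mathrm{H}^0(\bF_n,-K_{\bF_n})$. The paper compresses the second step into a single sentence: passing through a point $p$ gives a (nonzero, for $p$ off the base locus) linear evaluation functional on $\mathrm{H}^0(\bF_n,-K_{\bF_n})$, and ``general position'' is taken to mean precisely that these $d(n)$ functionals are linearly independent, which forces their common kernel to be zero. Where you differ is in how you justify that such a choice of points exists: you pass to the moving part $M=-K_{\bF_n}-C$ for $n\ge 3$, invoke nef-implies-base-point-free on a toric surface, and run the one-point-at-a-time induction on $\lvert M\rvert$. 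This is all valid (your intersection numbers and the count $h^0(M)=d(n)$ check out), but the detour through the fixed component and base-point-freeness is unnecessary: your inductive step works verbatim for an arbitrary linear system, since the common zero locus of any nonzero subspace of sections is contained in the divisor of a single nonzero section, hence is a proper closed subset which a general next point avoids. Base-point-freeness would only matter if you wanted \emph{every} point, rather than a general one, to define a nonzero functional. So your argument proves the lemma, just with more machinery than the paper --- or than your own induction --- actually requires.
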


\begin{proof}
    Since $K_X = \pi^*K_{\bF_n} + \sum E_i$, it follows that $-K_X = -\pi^*K_{\bF_n} - \sum E_i$ and elements of the linear system $\lvert -K_X\rvert$ can be identified with elements of the linear system $\lvert - K_{\bF_n}\rvert$ which pass through the points $p_1,\ldots, p_{d(n)}$. The condition of passing through $p\in \bF_n$ defines a non-zero (linear) evaluation map $H^0(\bF_n,-K_{\bF_n}) \to \mathbb{C}$. Thus, the general position condition is that $p_1,\ldots, p_{d(n)}$ are chosen such that their evaluation functionals are linearly independent.
\end{proof}

\begin{lemma}
\label{L:pseudoheightofF2at9points}
    The anticanonical pseudoheight of the exceptional collection \eqref{E:ECF2at9points} is $2$, or equivalently, its pseudoheight is $4$. 
\end{lemma}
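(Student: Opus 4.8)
The plan is to evaluate $\psheight_{\mathrm{ac}}$ directly from its definition as the minimum, over increasing chains $1 \le a_0 < \cdots < a_p \le 13$, of $\sum_{i=0}^{p-1}\rheight(\mathcal{L}_{a_i},\mathcal{L}_{a_{i+1}}) + \rheight(\mathcal{L}_{a_p},\mathcal{L}_{a_0}\otimes\omega_X^{-1}) - p$, where $\mathcal{L}_1,\dots,\mathcal{L}_{13}$ denote the members of \eqref{E:ECF2at9points}, and then to pass to the pseudoheight via the stated equality $\psheight = \psheight_{\mathrm{ac}} + \dim X$ with $\dim X = 2$. There are three kinds of relative height to control: the \emph{forward} heights $\rheight(\mathcal{L}_{a_i},\mathcal{L}_{a_{i+1}})$, the \emph{wrap-around} heights $\rheight(\mathcal{L}_{a_p},\mathcal{L}_{a_0}\otimes\omega_X^{-1})$, and the degenerate $p=0$ term $\rheight(\mathcal{O}_X,\mathcal{O}_X(-K_X))$, which is the same for every one-element chain.

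The first key input is that $H^\bullet(X,\mathcal{O}_X(-K_X)) = 0$ identically. Indeed $h^0(-K_X)=0$ by the general-position argument of \cref{L:generalpointvanishing} (here $d(2)=9$), while $\chi(-K_X) = \chi(\mathcal{O}_X) + K_X^2 = 1 - 1 = 0$ and $h^2(-K_X) = h^0(2K_X) = 0$ by Serre duality, forcing $h^1(-K_X)=0$ as well. Hence every $p=0$ chain contributes $\rheight(\mathcal{O}_X,\mathcal{O}_X(-K_X)) = \infty$, so, in contrast to \cref{lemma:anticanonical-pseudoheight-3}, the minimum cannot be attained on a length-$0$ chain; this is the structural reason the answer rises from $1$ to $2$.

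The second input is that every forward relative height is at least $2$. Since \eqref{E:ECF2at9points} is exceptional and the forward $\Hom$-groups already vanish (from the proof of \cref{theorem:phantom-f2-9-points}), it remains to rule out $\Ext^1$ for forward pairs; by numerical exceptionality and Riemann--Roch this reduces $\ext^1(\mathcal{L}_i,\mathcal{L}_j)=0$ to the vanishing of $h^1$ of the forward difference divisors, which I would check by the same degree and Macaulay2 arguments recorded for \cref{theorem:phantom-f2-9-points} (see \cref{app: mac}); each forward height is then $2$ or $\infty$. Granting this, the lower bound $\psheight_{\mathrm{ac}} \ge 2$ is a short arithmetic: a chain of length $p \ge 2$ contributes at least $2p + 0 - p = p \ge 2$ (using only $\rheight \ge 0$ for the wrap-around), while a length-$1$ chain with finite forward height contributes $2 + \rheight(\mathcal{L}_{a_1},\mathcal{L}_{a_0}(-K_X)) - 1 \ge 2$ precisely when the wrap-around height is $\ge 1$, i.e. when $h^0$ of the corresponding wrap-around divisor vanishes. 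I would verify this last family of $h^0$-vanishings (only for those pairs whose forward height is finite) in Macaulay2, since these divisors have positive degree against the nef class $C+3F$ and so escape a purely numerical argument.

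For the matching upper bound I would exhibit the length-$1$ chain $\{\mathcal{O}_X,\mathcal{O}_X(D_i)\}$. Here $\rheight(\mathcal{O}_X,\mathcal{O}_X(D_i)) = 2$, because $h^2(D_i) = h^0(-K_X + E_i) = 1$ (an anticanonical section through the remaining $8$ general points) while $h^0(D_i)=0$ and $\chi(\mathcal{O}_X(D_i)) = \chi(\mathcal{O}_X(E_i)) = 1$ forces $h^1(D_i)=0$; and $\rheight(\mathcal{O}_X(D_i),\mathcal{O}_X(-K_X)) = 1$, because the wrap-around divisor is $-3K_X + E_i$, a twist of $\lvert -3K_{\bF_2}\rvert$ with prescribed multiplicities at the nine points of negative expected dimension, so $h^0 = 0$ while $h^1 \neq 0$ (as $\chi<0$). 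This chain contributes $2 + 1 - 1 = 2$, giving $\psheight_{\mathrm{ac}} \le 2$, and hence $\psheight_{\mathrm{ac}} = 2$ and $\psheight = 4$. The main obstacle is not the combinatorics but the cohomology bookkeeping: unlike the $\bP^2$ setting, the forward-difference and wrap-around divisors have positive degree against the nef generator, so the needed vanishings of $h^0$ and $h^1$ cannot be read off from intersection numbers and must be extracted from the genericity of the nine points, in practice through the Macaulay2 computations of \cref{app: mac}.
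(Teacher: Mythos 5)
Your proposal is correct and follows essentially the same route as the paper: the paper's proof simply tabulates the relevant cohomology groups (Table~\ref{table:F2NHH}, computed via Riemann--Roch, genericity, degree arguments, and Macaulay2) and runs the chain combinatorics of Lemma~\ref{lemma:anticanonical-pseudoheight-3}, which is exactly what you do, including the key structural observation that $\rheight(\cO_X,\cO_X(-K_X)) = \infty$ here (versus $1$ in the $\bP^2$ case) and the identification of a length-$1$ chain such as $(\cO_X,\cO_X(D_i))$ realizing the value $2$. Your explicit derivations of $h^1(D_i)=0$ and $h^1(-K_X-D_i)\neq 0$ from $\chi$ and genericity match the entries of the paper's table, and you correctly flag that the remaining $h^0$ and $h^1$ vanishings require the Macaulay2 computations of Appendix~\ref{app: mac}, just as the paper does.
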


\begin{proof}
    Similarly to the case of $\bP^2$ blown up in 10 or 11 points, we need to compute certain sheaf cohomology groups. We tabulate the relevant ones in Table \ref{table:F2NHH} of Appendix~\ref{section-tables}. The Euler characteristic $\chi(D)$ of a given divisor can be computed using the Riemann-Roch formula \eqref{E:RRsurface}. The result now follows from mimicking the analysis of Lemma \ref{lemma:anticanonical-pseudoheight-3}. The reader can verify the claim using Table \ref{table:F2NHH}. 
\end{proof}

\begin{remark}
It is worth noting that the main difference between Lemma \ref{lemma:anticanonical-pseudoheight-3} and Lemma \ref{L:pseudoheightofF2at9points} is that in the former $\rheight(\cO_X,\cO_X(-K_X))$ is $1$, while in the latter it is $\infty$. In this way, the situation of Lemma~\ref{L:pseudoheightofF2at9points} is analogous to the case of $\bP^2$ blown up in $10$ points, or equivalently the case of $\bF_1$ blown up in $9$ points.
\end{remark} 

\begin{remark}
    Note that $\chi(-K_X) = 0$ uses the fact that $K_X^2 = -1$, which depends on the number of points blown up, rather than the degree of the Hirzebruch surface. Indeed, if we blow up more than $9$ points, $K_X^2 \le -2$ and it follows that $\chi(-K_X) < 0$. On the other hand, the vanishing $h^0(-K_X) = 0$ depends only on the points blown up being in suitably general position by Lemma \ref{L:generalpointvanishing}. Next, $h^2(-K_X) = h^0(X,\omega_X^{\otimes 2}) = 0$ since the plurigenera of rational surfaces vanish. Consequently, we see that if we blow up at $10$ or more points, $h^1(-K_X) \ne 0$. 
\end{remark}

\begin{lemma}
\label{L:h1TX}
    Let $X$ denote the blowup of the Hirzebruch surface $\bF_n$ at $d(n)$ general points where $n\ge 1$. Then $\HH^2(X) \cong \mathrm{H}^1(X,\cT_X)$. Furthermore, 
    \[
        h^1(X,\cT_X) = 
        \begin{cases}
            11 + n & 1\le n \le 3,\\
            5 + 3n & n \ge 4. 
        \end{cases}
    \]
\end{lemma}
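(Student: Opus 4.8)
The plan is to establish the two assertions separately, handling the Hochschild cohomology identification first and then the dimension count via a tangent-bundle computation that mirrors \cref{L:h1XTX}.

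For the isomorphism $\HH^2(X)\cong \mathrm{H}^1(X,\cT_X)$, I would invoke the Hochschild--Kostant--Rosenberg decomposition, which on a smooth projective surface reads
\[
\HH^2(X)\cong \mathrm{H}^2(X,\cO_X)\oplus \mathrm{H}^1(X,\cT_X)\oplus \mathrm{H}^0\bigl(X,\textstyle\bigwedge^2\cT_X\bigr).
\]
Since $X$ is a rational surface, $\mathrm{H}^2(X,\cO_X)\cong \mathrm{H}^0(X,\omega_X)^\vee=0$. Moreover $\bigwedge^2\cT_X\cong \cO_X(-K_X)$, and $\mathrm{H}^0(X,-K_X)=0$ by \cref{L:generalpointvanishing}, because $X$ is the blowup of $\bF_n$ at $d(n)$ general points. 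Hence the two outer summands vanish and only $\mathrm{H}^1(X,\cT_X)$ survives.

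For the dimension count I would run the same cotangent-sequence argument as in \cref{L:h1XTX}. Writing $\pi\colon X\to \bF_n$ for the blowup at $p_1,\dots,p_{d(n)}$, with exceptional divisors $E_i\cong\bP^1$ and inclusions $j_i$, this yields
\[
0\to \cT_X\to \pi^*\cT_{\bF_n}\to \bigoplus_{i=1}^{d(n)} (j_i)_*\cO_{E_i}(1)\to 0 .
\]
As $R^\bullet\pi_*\cO_X=\cO_{\bF_n}$, the projection formula gives $\mathrm{H}^k(X,\pi^*\cT_{\bF_n})\cong \mathrm{H}^k(\bF_n,\cT_{\bF_n})$, and $\mathrm{H}^{>0}(E_i,\cO_{E_i}(1))=0$, so the long exact sequence collapses to
\[
0\to \mathrm{H}^0(\cT_X)\to \mathrm{H}^0(\cT_{\bF_n})\xrightarrow{\ \alpha\ } \bigoplus_{i=1}^{d(n)}\mathrm{H}^0(E_i,\cO_{E_i}(1))\to \mathrm{H}^1(\cT_X)\to \mathrm{H}^1(\cT_{\bF_n})\to 0 .
\]
This gives $h^1(X,\cT_X)=\bigl(2\,d(n)-\operatorname{rank}\alpha\bigr)+h^1(\bF_n,\cT_{\bF_n})$ and $h^0(X,\cT_X)=\dim\ker\alpha$, reducing the problem to the cohomology of $\cT_{\bF_n}$ and the rank of the evaluation map $\alpha$.

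For the cohomology of $\cT_{\bF_n}$ I would use the relative tangent sequence $0\to \cT_{\bF_n/\bP^1}\to \cT_{\bF_n}\to p^*\cT_{\bP^1}\to 0$ of the ruling $p\colon\bF_n\to\bP^1$. Identifying $p_*\cT_{\bF_n/\bP^1}$ with the traceless endomorphisms $\mathfrak{sl}(\cO\oplus\cO(n))\cong \cO\oplus\cO(n)\oplus\cO(-n)$ and using $\mathrm{H}^1(\bP^1,\cO(-n))\cong\mathbb{C}^{\max\{0,n-1\}}$, one obtains $h^0(\cT_{\bF_n})=n+5$, $h^1(\cT_{\bF_n})=\max\{0,n-1\}$, and $h^2(\cT_{\bF_n})=0$, the connecting map vanishing because $\mathrm{PGL}_2$ lifts to $\Aut(\bF_n)$. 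It remains to determine $\operatorname{rank}\alpha$, equivalently $h^0(X,\cT_X)$, the infinitesimal automorphisms surviving the blowup; I would analyze this through the ruling, noting that vanishing at $p_i$ forces the horizontal part of a vector field to vanish at $b_i=p(p_i)$ while the vertical part is cut out by an evaluation condition for sections of $\cO_{\bP^1}(n)$. Feeding the resulting rank into the displayed formula, together with the two regimes $d(n)=9$ for $1\le n\le 3$ and $d(n)=n+6$ for $n\ge 4$, produces the stated dimensions, the case division being exactly the split in $d(n)=6+\max\{3,n\}$.

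The main obstacle is the precise computation of $\operatorname{rank}\alpha$ (equivalently $h^0(X,\cT_X)$): one must control how many of the $n+5$ independent vector fields on $\bF_n$ are forced to vanish at the $d(n)$ chosen points. This is delicate because $\Aut(\bF_n)$ carries a large unipotent radical coming from $\mathrm{H}^0(\bP^1,\cO(n))$, whose vector fields vanish along a whole section together with entire fibers; one must therefore verify directly that the evaluation conditions cut down the space of vector fields as expected, rather than rely on a naive expected-dimension count, and track carefully the interaction with the non-generic $(-n)$-section of $\bF_n$.
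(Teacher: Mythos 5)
Your first two steps coincide with the paper's proof: the HKR identification $\HH^2(X)\cong \mathrm{H}^1(X,\cT_X)$ via $h^{0,2}(X)=0$ and \cref{L:generalpointvanishing}, and the long exact sequence
\[
0\to \mathrm{H}^0(\cT_X)\to \mathrm{H}^0(\cT_{\bF_n})\xrightarrow{\alpha}(\CC^2)^{\oplus d(n)}\to \mathrm{H}^1(\cT_X)\to \mathrm{H}^1(\cT_{\bF_n})\to 0
\]
together with $h^1(\bF_n,\cT_{\bF_n})=n-1$. But your proposal stops exactly where the numbers in the statement have to come from: you never determine $\operatorname{rank}\alpha$, saying only that feeding "the resulting rank" into the formula produces the stated dimensions, and then flagging that rank as the main obstacle. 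Since $h^1(X,\cT_X)=\bigl(2d(n)-\operatorname{rank}\alpha\bigr)+(n-1)$, the value of $\operatorname{rank}\alpha$ \emph{is} the entire content of the lemma; leaving it undetermined is a genuine gap, not a deferred verification. Note that the stated dimensions are equivalent to $\operatorname{rank}\alpha=6$, i.e. to $h^0(X,\cT_X)=n-1$; the paper fills this gap by arguing, via the description of $\bF_n$ as the resolution of $\bP(1,1,n)$, that $\Aut(\bF_n)$ acts $3$-transitively on generic points and hence that the image of $\alpha$ (called $\rho$ there) is $6$-dimensional.

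You should also be aware that completing the computation you sketch does not reproduce the stated numbers. In an affine chart with coordinates $(t,u)$, a global vector field on $\bF_n$ has the form $(a+bt+ct^2)\partial_t+(\mu u+nctu+f(t))\partial_u$ with $\deg f\le n$; vanishing at $d(n)\ge 3$ general points forces $a=b=c=0$, and the remaining conditions $\mu u_i+f(t_i)=0$ form a linear system whose $d(n)\times (n+2)$ matrix, with rows $(u_i,1,t_i,\dots,t_i^n)$, has full rank $n+2$ for general points because $d(n)=6+\max\{3,n\}\ge n+2$. Hence $h^0(X,\cT_X)=0$, so $\operatorname{rank}\alpha=n+5$ and the exact sequence gives $h^1(X,\cT_X)=2d(n)-6$, which agrees with the displayed formula only for $n=1$; the same answer is forced by $\chi(\cT_X)=6-2d(n)$ once $h^0(\cT_X)=h^2(\cT_X)=0$. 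By contrast, $3$-transitivity as invoked in the paper only bounds $\operatorname{rank}\alpha$ \emph{below} by $6$; it gives no upper bound, and for $n\ge 2$ the rank is strictly larger. So the step you isolated as the main obstacle is precisely where the difficulty lies: resolving it along the lines you propose yields a conclusion in conflict with the statement for $n\ge 2$ (and with the downstream claim $\dim\HH^2(\cC)=13$ in \cref{C:HHF2in9points}), and any correct proof would have to either exhibit an $(n-1)$-dimensional space of vector fields vanishing at all $d(n)$ general points — which the computation above rules out — or revise the stated dimensions.
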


\begin{proof}
    First, $\dim \mathrm{H}^2(X,\cO_X) = h^{0,2}(X) = 0$ for any rational surface. Next, $\Lambda^2 \cT_X \cong \omega_X^\vee$ and the genericity hypothesis on the points $p_1,\ldots, p_{d(n)}$ gives $h^0(X,\omega_X^\vee) = 0$ by Lemma \ref{L:generalpointvanishing}. 
    Thus, the Hochschild--Kostant--Rosenberg decomposition gives an isomorphism $\HH^2(X) \cong \mathrm{H}^1(X,\cT_X)$. 
    
    Now we compute $h^1(X,\cT_X)$. Denote by $f:X\to \bF_n$ the blowup morphism and by $E_1,\ldots, E_{d(n)}$ the exceptional divisors. The short exact sequence 
    \[
        0 \to \cT_X\to f^*\cT_{\bF_n}\to \bigoplus_{i=1}^{d(n)} \cO_{E_i}(1)\to 0
    \]
    transforms under $f_*$ to 
    \[
        0 \to f_* \cT_X \to \cT_{\bF_n}\to \bigoplus_{i=1}^{d(n)} \cT_{\bF_n,p_i}\to 0 
    \]
    where $\cT_{\bF_n,p_i}$ is regarded as a length two sheaf supported at $p_i$. We obtain the long exact sequence that describes deformations of a blown-up surface:
    \[
        0 \to \mathrm{H}^0(X,\cT_X) \to \mathrm{H}^0(\bF_n,\cT_{\bF_n}) \xrightarrow{\rho} (\mathbb{C}^2)^{\oplus d(n)} \to \mathrm{H}^1(X,\cT_X) \to \mathrm{H}^1(\bF_n,\cT_{\bF_n}) \to 0.
    \]
    For $n\ge 1$, using the fact that for $\bF_n$ can be obtained as the blowup at the singular point of the weighted projective space $\bP(1,1,n)$, we see that $\Aut(\bF_n)$ is $3$-transitive for generic choices of points and in particular that $\rho$ maps onto a $6$ dimensional subspace of $(\mathbb{C}^2)^{\oplus d(n)}$. Thus, $h^1(X,\cT_X) = 2d(n) - 6 + h^1(\bF_n,\cT_{\bF_n})$, and the result now follows from \cite[Lemma II.5]{Manettilectures}, which states that $h^1(\bF_n,\cT_{\bF_n}) = n-1$.
\end{proof}

\begin{proposition}
\label{C:HHF2in9points}
    Let $\cC$ be the phantom category from Theorem~\ref{theorem:phantom-f2-9-points}. Then $\dim \HH^2(\cC) = 13$.  
\end{proposition}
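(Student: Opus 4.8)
The plan is to exploit the normal Hochschild cohomology triangle~\eqref{equation:normal-hochschild-cohomology} together with the pseudoheight computation of Lemma~\ref{L:pseudoheightofF2at9points}. The essential observation is that, in contrast to the situation of Proposition~\ref{proposition-HH-11-points}, the pseudoheight here equals $4$ rather than $3$, and this extra unit of height is exactly what is needed to pin down $\HH^2(\cC)$ on the nose rather than merely to bound it below.

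Concretely, let $\cE \subset \Db(X)$ denote the subcategory generated by the exceptional collection~\eqref{E:ECF2at9points}. By Lemma~\ref{L:pseudoheightofF2at9points}, the pseudoheight of this collection is $4$, so Lemma~\ref{lemma:heights-of-exc-coll} gives $\height(\cE, X) \geq 4$. By the definition of height, this means $\NHH^k(\cE, X) = 0$ for every $k \leq 3$; in particular, both $\NHH^2(\cE,X)$ and $\NHH^3(\cE,X)$ vanish. Feeding this into the long exact sequence associated to the distinguished triangle~\eqref{equation:normal-hochschild-cohomology}, namely
\[
    \NHH^2(\cE,X) \to \HH^2(X) \to \HH^2(\cC) \to \NHH^3(\cE,X),
\]
the outer two terms are zero, and hence the restriction morphism induces an isomorphism $\HH^2(X) \xrightarrow{\sim} \HH^2(\cC)$.

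It then remains to compute $\HH^2(X)$. Since $X$ is the blowup of $\bF_2$ at $d(2) = 9$ general points, Lemma~\ref{L:h1TX} applies with $n = 2$: it identifies $\HH^2(X) \cong \mathrm{H}^1(X,\cT_X)$ via the Hochschild--Kostant--Rosenberg decomposition and yields $h^1(X,\cT_X) = 11 + n = 13$. Combining this with the isomorphism of the previous step gives $\dim \HH^2(\cC) = \dim \HH^2(X) = 13$, as claimed.

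The argument is short once the pseudoheight is known to equal $4$, so there is no serious obstacle remaining: the substantive work has already been carried out in establishing Lemmas~\ref{L:pseudoheightofF2at9points} and~\ref{L:h1TX}. The one point deserving care is to confirm that $\height(\cE,X) \geq 4$ genuinely forces both $\NHH^2$ and $\NHH^3$ to vanish --- this is precisely what upgrades the lower bound of the $\bP^2$ case to an equality here --- and to note that the HKR identification $\HH^2(X) \cong \mathrm{H}^1(X,\cT_X)$ relies on the vanishings $h^{0,2}(X) = 0$ and $h^0(X,\omega_X^\vee) = 0$ recorded in the proof of Lemma~\ref{L:h1TX}.
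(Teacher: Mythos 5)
Your proof is correct and follows essentially the same route as the paper: combine Lemma~\ref{lemma:heights-of-exc-coll} with Lemma~\ref{L:pseudoheightofF2at9points} to get $\height \geq 4$, hence $\NHH^{i}(\cE,X) = 0$ for $i \leq 3$, deduce $\HH^2(X) \cong \HH^2(\cC)$ from the long exact sequence of the triangle~\eqref{equation:normal-hochschild-cohomology}, and conclude via Lemma~\ref{L:h1TX} with $n = 2$. Your write-up is simply a more detailed unpacking of the paper's argument, including the correct observation that the vanishing of \emph{both} $\NHH^2$ and $\NHH^3$ is what yields an equality here rather than the lower bound of Proposition~\ref{proposition-HH-11-points}.
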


\begin{proof}
By Lemmas \ref{lemma:heights-of-exc-coll} and \ref{L:pseudoheightofF2at9points}, it follows that the height of the collection \eqref{E:ECF2at9points} is at least $4$ and thus $\NHH^i(\cE,X) = 0$ for $i\le 3$, where $\cE \subset \Db(X)$ is the subcategory generated by the excpetional collection~\eqref{E:ECF2at9points}. Thus, we have an isomorphism $\HH^2(X) \cong \HH^2(\cC)$ induced by the long exact sequence of the triangle \eqref{equation:normal-hochschild-cohomology}. 
Now the claim follows from Lemma~\ref{L:h1TX}. 
\end{proof}

\begin{remark}[Comparison with other phantoms]
\label{remark-compare-to-krah-F2}
As in Remark~\ref{remark-compare-to-krah}, it follows from Proposition~\ref{C:HHF2in9points} that the phantom category constructed in Theorem \ref{theorem:phantom-f2-9-points} is not equivalent to the one constructed by Krah on the blow up of $\bP^2$ at $10$ points or the one constructed in Theorem \ref{theorem:phantom-11-points} on the blow up of $\bP^2$ at $11$ points.
\end{remark}

\subsection{Conjectures about the case of general \texorpdfstring{$\bF_n$}{Fn}}

Based on the $n=2$ case and Krah's results, we can formulate some guesses about phantom categories on blowups of other Hirzebruch surfaces. The work of Pirozhkov \cite{phantoms-pirozhkov} and the more recent work \cite{BorisovKimoi} have shown that the behavior of the anticanonical linear system plays a key role in the study of phantom categories on surfaces. For instance, \cite[Conjecture 1.3]{BorisovKimoi} predicts that for a smooth projective surface~$X$, the category $\Db(X)$ should have no phantom categories if either of the canonical or anticanonical linear systems of $X$ is nonempty.
The following result is proved in this direction.

\begin{theorem}
[{\cite[Theorem 1.1]{BorisovKimoi}}]
\label{T:BK1.1}
Let $X$ be a smooth complex projective surface with an effective smooth anticanonical divisor $E$ such that the restriction map $\Pic(X) \to \Pic(E)$ is injective. Then $\Db(X)$ has no phantom subcategories.
\end{theorem}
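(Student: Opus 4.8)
The plan is to exploit the hypothesis in two independent ways: a \emph{global} constraint from the existence of an effective anticanonical divisor $E$, and a \emph{local} rigidity coming from the injectivity of $\Pic(X)\to\Pic(E)$. Suppose, for contradiction, that $\cC\subset\Db(X)$ is a nonzero phantom, sitting in a semiorthogonal decomposition $\Db(X) = \langle \cC, \cD\rangle$. The first step is to recall that, by adjunction and the definition of the Serre functor $S_X(-) = (-)\otimes\omega_X[\dim X]$, the subcategory $\cC$ is preserved (up to the Serre twist) under tensoring by $\omega_X$; equivalently, because $E\in|-K_X|$ is effective, the anticanonical bundle $\omega_X^\vee = \cO_X(E)$ gives a concrete handle on the autoequivalence structure. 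The goal is to show that an object $G\in\cC$ with $G\neq 0$ leads to a contradiction with $\rK_0(\cC)=0$ by producing a nonzero numerical invariant out of the restriction to $E$.

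The key geometric input is the short exact sequence $0\to\cO_X(K_X)\to\cO_X\to\cO_E\to 0$ coming from the effective divisor $E\in|-K_X|$. First I would restrict objects of $\Db(X)$ to $E$ via the derived pullback $i^*\colon\Db(X)\to\Db(E)$, where $i\colon E\hookrightarrow X$. Since $E$ is a smooth anticanonical divisor on a surface, it is a (possibly reducible/elliptic) curve, and the restriction functor interacts well with the Serre functors: by the adjunction formula $\omega_E\cong\cO_E$, so $E$ is (arithmetically) a curve of trivial canonical class, and $\Db(E)$ has a well-understood structure. The plan is to show that the restriction $i^*\colon\cC\to\Db(E)$ cannot kill all of $\cC$ unless $\cC=0$: the injectivity of $\Pic(X)\to\Pic(E)$ is precisely what rules out ``phantom-like'' behavior on $E$, because numerical/$\rK$-theoretic classes on $X$ are detected after restriction to $E$. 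Concretely, I would argue that a nonzero object $G\in\cC$ has $i^*G\neq 0$ in $\Db(E)$, and then compute its class $[i^*G]\in\rK_0(E)$; using the injectivity of $\Pic(X)\to\Pic(E)$ together with $\rK_0(\cC)=0$, I would derive that $[i^*G]$ must vanish, and then bootstrap from the vanishing of the class to the vanishing of $G$ itself via a Serre-duality/orthogonality argument on the curve $E$.

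The main obstacle, and the technical heart of the argument, is bridging the gap between \emph{$\rK$-theoretic vanishing} and \emph{actual vanishing of objects}: a phantom by definition has $\rK_0(\cC)=0$, so any invariant extracted purely from $\rK$-theory is automatically zero and gives no contradiction. The injectivity hypothesis $\Pic(X)\hookrightarrow\Pic(E)$ must therefore be leveraged at the level of \emph{line bundles and their cohomology}, not merely $\rK$-classes: the idea is that every nonzero object of $\cC$, when restricted to $E$, produces a nonzero map of line bundles (or a nonzero Ext-class) that is visible on $E$ but would have to be invisible by semiorthogonality on $X$. I expect the crux to be showing that the restriction functor $i^*|_{\cC}$ is \emph{conservative} (detects nonzero objects), which is where one must combine the Koszul resolution of $\cO_E$, the triviality of $\omega_E$, and the fact that on a curve of trivial canonical class an object with trivial Picard-theoretic ``support data'' forced by the injectivity must vanish. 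Making this conservativity precise — ruling out that a genuinely nonzero object of $\cC$ restricts to zero on every anticanonical $E$ — is the step I would budget the most care for, and is likely where the authors invoke a more refined analysis of how $\cC$ sits relative to the spherical/elliptic structure of $\Db(E)$.
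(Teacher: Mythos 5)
Note first that the paper does not prove Theorem~\ref{T:BK1.1} at all: it is imported verbatim from \cite[Theorem 1.1]{BorisovKimoi} and used as a black box to motivate Conjectures~\ref{Conj:generaln} and~\ref{Conj:blowupnphantom}. So the only meaningful comparison is with the proof in that reference, which is a substantial argument in its own right; your proposal does not reconstruct it, and as written it contains genuine gaps rather than a complete alternative route.

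Concretely: (i) the step where you ``use the injectivity of $\Pic(X)\to\Pic(E)$ together with $\rK_0(\cC)=0$ to derive that $[i^*G]$ must vanish'' is vacuous. Since $\rK_0(\cC)\to\rK_0(X)$ is a split injection and $\rK_0(\cC)=0$, every $G\in\cC$ already satisfies $[G]=0$ in $\rK_0(X)$, hence $[i^*G]=i^*[G]=0$ automatically; the Picard hypothesis plays no role there, so this step extracts no information. (ii) The proposed ``bootstrap from the vanishing of the class to the vanishing of $G$ itself'' on $E$ cannot work as described: $\Db(E)$ contains many nonzero objects of zero class, for instance $G'\oplus G'[1]$ for any $G'\neq 0$, so no Serre-duality or orthogonality argument on the curve can upgrade $[i^*G]=0$ to $i^*G=0$. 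You flag this ``$\rK$-theory versus objects'' gap yourself, but flagging it is not closing it --- it is precisely the content of the theorem. (iii) Conservativity of $i^*|_{\cC}$ is neither established nor sufficient: if $i^*G=0$, then tensoring $0\to\omega_X\to\cO_X\to i_*\cO_E\to 0$ with $G$ gives a triangle $G\otimes\omega_X\to G\to i_*i^*G$, so one only learns $G\cong G\otimes\omega_X$, i.e.\ $G$ is supported away from $E$; excluding this requires an actual argument. Conversely, even granting conservativity, knowing $i^*G\neq 0$ for all nonzero $G$ yields no contradiction by itself. Relatedly, your opening assertion that $\cC$ ``is preserved (up to the Serre twist) under tensoring by $\omega_X$'' is unjustified: admissible subcategories are in general not stable under $\otimes\,\omega_X$, and this failure is exactly what leaves room for phantoms to exist. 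The proof in \cite{BorisovKimoi} has to do real work precisely at the point you defer, with the Picard injectivity entering in an essential, non-formal way; as it stands, your proposal should be regarded as a statement of the difficulty rather than a proof.
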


One can apply this result to prove the non-existence of phantom categories on blowups of $\bF_0 = \bP^1\times \bP^1$, $\bF_1$, and $\bP^2$ for any number of points lying in very general position on a smooth anticanonical divisor. More generally, when one studies the anticanonical linear system of a Hirzebruch surface there is a trichotomy of behaviors. Indeed, by \cite[V.2.18(b)]{hartshorne} the anticanonical linear system $\lvert -K_{\bF_n}\rvert$: \vspace{2mm}
\begin{enumerate}
    \item is very ample for $0\le n\le 1$,\vspace{2mm}
    \item is not ample but contains a smooth curve for $n = 2$, and \vspace{2mm}
    \item contains no irreducible curve for $n\ge 3$.\vspace{2mm}
\end{enumerate}

Nevertheless, the case of $n=2$ is beyond the scope of Theorem \ref{T:BK1.1} because the restriction map $\Pic(\bF_2)\to \Pic(E)$ is not injective for any anticanonical divisor $E$, since it is disjoint from the $(-2)$-curve $C$ considered above. However, \cite[Conjecture 1.3]{BorisovKimoi} predicts that $\Db(\bF_n)$ should not have any phantom subcategories since $h^0(\bF_n,\omega_{\bF_n}^\vee) = 6 + \max\{3,n\} > 0$. If $X$ is the blowup of $\bF_n$ at points $p_1,\ldots, p_d$ then $-K_X = -\pi^*K_{\bF_n} - \sum E_i$ and $\lvert -K_X\rvert$ is identified with the sub linear system of $\lvert -K_{\bF_n}\rvert$ consisting of divisors passing through $p_1,\ldots, p_d$. Thus, $\dim \lvert -K_X\rvert \ge \dim \lvert -K_{\bF_n}\rvert - d$ with equality for the points $p_1,\ldots, p_d$ chosen in suitably general position. Thus, we have a pair of conjectures. The first is essentially a restatement of \cite[Conjecture 1.3]{BorisovKimoi} and predicts nonexistence of phantom subcategories:

\begin{conjecture}
\label{Conj:generaln}
    Let $n\ge 0$ be given and let $X$ be a surface obtained from blowing up $\bF_n$ in 
    \begin{enumerate}
        \item $d$ general points with $d < 6 + \max\{3,n\}$, or 
        \item any number of points in general position on an anticanonical divisor of $\bF_n$.
    \end{enumerate}
    In these cases, $\Db(X)$ has no phantom subcategories. 
\end{conjecture}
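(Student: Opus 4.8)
The plan is to derive both assertions from the Borisov--Kimoi criterion recorded as Theorem~\ref{T:BK1.1}: if $X$ carries an effective \emph{smooth} anticanonical divisor $E$ for which the restriction map $\Pic(X) \to \Pic(E)$ is injective, then $\Db(X)$ has no phantom subcategories. So the whole argument reduces to three tasks: first, exhibiting an effective anticanonical divisor on $X$; second, arranging it to be smooth; and third, checking injectivity of the restriction map. The first task is routine in both cases, and the difficulty is concentrated in the last two and, as I explain below, in the range $n \ge 3$.

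For the existence of an effective anticanonical divisor in case~(1): since $d < d(n) = h^0(\bF_n, -K_{\bF_n})$, passing through $d$ general points imposes at most $d$ conditions on $|-K_{\bF_n}|$, so $\dim |-K_X| \ge d(n) - 1 - d \ge 0$ and $|-K_X| \neq \varnothing$. This is exactly the dimension count underlying Lemma~\ref{L:generalpointvanishing}, run below the threshold rather than at it. In case~(2) I would take the fixed divisor $E_0 \in |-K_{\bF_n}|$ containing the points and use $-K_X = -\pi^* K_{\bF_n} - \sum_i E_i$ to identify the strict transform $\widetilde{E_0}$ as an effective anticanonical divisor on $X$, valid for any number of points provided they are smooth points of $E_0$.

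For smoothness and injectivity when $0 \le n \le 2$: here $|-K_{\bF_n}|$ contains a smooth (necessarily elliptic, by adjunction) member, so a Bertini argument applied to the sub-system of divisors through the chosen points yields a smooth anticanonical $E \subset X$. Injectivity of $\Pic(X) \to \Pic(E)$ then reduces, on the kernel, to showing that every class $D \in K_X^{\perp}$ restricts to a nonzero element of $\Pic^0(E) \cong E$. Since $E$ is a divisible group while $K_X^{\perp}$ is a finitely generated lattice, this is a very-general-position statement about the images in $E$ of the exceptional classes $E_i$ and the fiber class $F$, and I would prove it holds off a countable union of proper subvarieties of the configuration space.

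The hard part will be the range $n \ge 3$, and this is precisely what keeps the statement conjectural. For such $n$ the system $|-K_{\bF_n}|$ contains no irreducible curve: the computation $(-K_{\bF_n}) \cdot C = 2 - n < 0$ forces the negative section $C$ into the base locus, so every anticanonical divisor is reducible and none is smooth. Consequently Theorem~\ref{T:BK1.1} simply does not apply to $X$, and the strategy above breaks down at the smoothness step. Closing this gap would require a genuine extension of the Borisov--Kimoi theorem that tolerates reducible and possibly non-reduced effective anticanonical divisors --- for instance a cycle-of-curves anticanonical divisor together with an appropriate replacement for the injectivity hypothesis on its components. I regard producing such a generalization, rather than any of the bookkeeping in the easier cases, as the essential obstacle.
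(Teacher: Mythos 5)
You are attempting to prove what the paper itself poses only as Conjecture~\ref{Conj:generaln}: the paper does not prove it, and explicitly records that just a \emph{weakened} version (points in very general position on a smooth anticanonical divisor) follows from Theorem~\ref{T:BK1.1} when $n=0,1$, while ``for $n\ge 2$, new techniques seem to be needed.'' Your proposal does not overcome this. You concede $n\ge 3$ outright, so at best you have a partial argument; but the part you do claim, namely $n=2$, contains a genuine error. On $\bF_2$ the $(-2)$-curve $C$ satisfies $C\cdot(-K_{\bF_2}) = 2C^2 + 4(C\cdot F) = -4+4 = 0$. If $E \in \lvert -K_{\bF_2}\rvert$ is smooth, then $C$ cannot be a component of $E$ (the components of a smooth divisor are pairwise disjoint, so this would force $0 = C\cdot E = C^2 = -2$), hence $C$ and $E$ are disjoint and $\cO_{\bF_2}(C)|_E \cong \cO_E$. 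Thus $C$ is a nonzero class in the kernel of $\Pic(\bF_2)\to\Pic(E)$. The same persists on any blowup $\pi\colon X \to \bF_2$ at points off $C$: for any smooth $E'\in\lvert -K_X\rvert$ one has $\pi^*C\cdot E' = 0$ and $\pi^*C$ is not a component of $E'$, so $\pi^*C$ restricts trivially to $E'$. The kernel of $\Pic(X)\to\Pic(E')$ is therefore nonzero for \emph{every} smooth anticanonical divisor and \emph{every} configuration of points. This is a structural fact, not a positivity or genericity issue, so your plan to achieve injectivity ``off a countable union of proper subvarieties of the configuration space'' cannot succeed: the class $\pi^*C \in K_X^\perp$, which your reduction to ``images of the $E_i$ and $F$ in $\Pic^0(E)$'' overlooks, restricts to zero identically. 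This is exactly the obstruction the paper points out when it says the $n=2$ case is beyond the scope of Theorem~\ref{T:BK1.1}. The dichotomy is that for $n\le 1$ the anticanonical class is ample, so every curve meets $E$ positively, whereas for $n=2$ it is only nef and the curve $C$ on which it vanishes produces an unavoidable kernel.

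Two further mismatches remain even in the range $n=0,1$ where the strategy can be salvaged. First, Theorem~\ref{T:BK1.1} arguments only give the conclusion for points in \emph{very} general position (the injectivity locus is the complement of countably many proper subvarieties), whereas the conjecture asserts it for \emph{general} points. Second, in case~(2) the anticanonical divisor of $\bF_n$ containing the points need not be smooth --- and for $n\ge 3$ it never is --- so your strict-transform construction tacitly adds the hypothesis that the points lie on a \emph{smooth} member. In summary: your argument reproduces (modulo care with the assigned base points in the Bertini step) the weak form of the conjecture already noted in the paper for $n=0,1$, fails at $n=2$ for the structural reason above, and is silent for $n\ge 3$; the statement remains a conjecture.
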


The second conjecture predicts existence of phantom categories after a critical number of points, which depends on $n$, have been blown up:

\begin{conjecture}
\label{Conj:blowupnphantom}
    If $X$ is obtained from blowing up $\bF_n$ at $d(n) = 6+\max\{3,n\}$ general points then $\Db(X)$ has a phantom subcategory $\cC_n$ orthogonal to an exceptional collection of line bundles of maximal length. Furthermore, the map $\mathrm{H}^1(X,\cT_X) \to \HH^2(\cC_n)$ is an isomorphism so that $\cC_n \not\simeq \cC_m$ for $n\ne m$.
\end{conjecture}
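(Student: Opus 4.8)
The plan is to run the three-step strategy of \S\ref{S:p211points} for each $n$, with the reflection generalized as in Theorem~\ref{theorem:alternatereflection}. The first task is to produce a suitable involution. Blowing up $\bF_n$ at $d(n)$ points gives $K_X^2 = 8 - d(n) = 2 - \max\{3,n\}$, so the negative reflection $D \mapsto -D - 2(D.K_X/K_X^2)K_X$ about $K_X$ preserves the integral lattice $\Pic(X)$ only when $\lvert K_X^2\rvert \le 2$, i.e.\ for $n \le 4$. For $n \le 4$ I would therefore take $\iota$ to be this reflection, exactly as in Theorems~\ref{theorem:phantom-11-points} and~\ref{theorem:phantom-f2-9-points}; for $n \ge 5$ the reflection is no longer integral, and I would instead seek an isometric involution of the form $\iota = \mathrm{id} - 2P$, where $P$ is an integral orthogonal projection onto a sublattice $V \subset K_X^{\perp}$, following the recipe of Theorem~\ref{theorem:alternatereflection}. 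Placing $V$ inside $K_X^{\perp}$ forces $\iota(K_X) = K_X$, and integrality of $P$ makes $\iota$ a lattice isometry; applying $\iota$ to a standard full exceptional collection of line bundles of the type in \eqref{eq:f2except} then produces, via Riemann--Roch \eqref{E:RRsurface}, a numerically exceptional collection of maximal length. Its right orthogonal $\cC_n$ is a universal phantom by Lemma~\ref{L:universal_phantoms}, provided the collection is genuinely exceptional and not full.

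Steps 2 and 3 --- upgrading numerical exceptionality to exceptionality and proving non-fullness --- both reduce, as in the outline, to the vanishing of $h^0$ of the divisors $\iota L_j - \iota L_i$ and $\iota L_i - \iota L_j + K_X$. I would organize these into the same three layers used in the proofs of Theorems~\ref{theorem:phantom-11-points} and~\ref{theorem:phantom-f2-9-points}: divisors pairing negatively with a nef class (degree), differences of exceptional classes (ideal sequence), and a remaining list that must be shown to have no sections either by reduction to standard form and Corollary~\ref{Cor:standardformcohomology} or by the finite-field computations of Appendix~\ref{app: mac}. Once all vanishings hold, exceptionality follows from numerical exceptionality together with the identity $\ext^2(E,F) = \chi(E,F)$, and non-fullness follows from Lemma~\ref{lemma:heights-of-exc-coll} once the anticanonical pseudoheight is shown to exceed $-\dim X = -2$.

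For the Hochschild statement I would mimic Proposition~\ref{C:HHF2in9points}. Lemma~\ref{L:generalpointvanishing} gives $h^0(X, \omega_X^{\vee}) = 0$, and $h^{0,2}(X) = 0$ for rational surfaces, so the Hochschild--Kostant--Rosenberg decomposition yields $\HH^2(X) \cong \mathrm{H}^1(X,\cT_X)$, whose dimension is computed by Lemma~\ref{L:h1TX} to be $11+n$ for $1 \le n \le 3$ and $5 + 3n$ for $n \ge 4$ (the case $n=0$, where $\bF_0$ is rigid, requiring a separate computation of $h^1(X,\cT_X)$). These values are pairwise distinct and the two ranges do not overlap, so an isomorphism $\mathrm{H}^1(X,\cT_X) \xrightarrow{\sim} \HH^2(\cC_n)$ would force $\cC_n \not\simeq \cC_m$ for $n \ne m$, since a derived equivalence induces an isomorphism on $\HH^2$. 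To obtain this isomorphism from the triangle \eqref{equation:normal-hochschild-cohomology} I need $\NHH^i(\cE, X) = 0$ for $i \le 3$, i.e.\ the height of the reflected collection to be at least $4$. Here the argument splits: for $n \le 3$ one has $K_X^2 = -1$, hence $\chi(-K_X) = 0$ and $\rheight(\cO_X, \cO_X(-K_X)) = \infty$, so the anticanonical pseudoheight reaches $2$ exactly as in Lemma~\ref{L:pseudoheightofF2at9points} and the isomorphism follows cleanly.

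The hard part will be twofold. For $n \ge 5$ the construction of the integral involution $\iota$ is genuinely open: unlike the $K_X$-reflection, the projection-type $\iota$ depends on a choice of sublattice $V \subset K_X^{\perp}$, and it is not clear a priori that some $V$ makes every relevant linear system empty --- nor is there a uniform SHGH-type input to certify these vanishings once $n$ grows, so one may be forced into case-by-case computation. Independently, the isomorphism in the Hochschild statement is delicate for $n \ge 4$: there $K_X^2 \le -2$ forces $h^1(-K_X) \ne 0$, hence $\rheight(\cO_X, \cO_X(-K_X)) = 1$, so the anticanonical pseudoheight drops to $1$ and the pseudoheight bound yields only the injection $\HH^2(X) \hookrightarrow \HH^2(\cC_n)$ of Proposition~\ref{proposition-HH-11-points}. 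Proving that this map is onto --- equivalently, that $\NHH^3(\cE,X)$ vanishes despite the pseudoheight being only $3$ --- requires controlling the height directly rather than through its pseudoheight lower bound, and this is the principal obstacle to the full conjecture.
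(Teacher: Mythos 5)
The statement you are addressing is a \emph{conjecture}, and the paper contains no proof of it to compare against. What the paper does is assemble evidence and a program: the cases $n=0,1$ follow from Krah's work (since $\bF_1 \cong \Bl_1(\bP^2)$ and $\Bl_1(\bF_0) \cong \Bl_2(\bP^2)$), the case $n=2$ is Theorem~\ref{theorem:phantom-f2-9-points}, the cases $n=3,4$ are said to be within reach of the same $K_X$-reflection but are explicitly not attempted, and the cases $n\ge 5$ are left open precisely because the reflection along $K_X$ fails to be integral once $K_X^2\le -3$, with the paper suggesting that projection-type involutions as in Theorem~\ref{theorem:alternatereflection} might help. Your proposal reconstructs this program faithfully, and the two obstacles you single out are exactly the reasons the statement remains a conjecture: (i) for $n\ge 5$ nobody knows how to produce an integral isometric involution fixing $K_X$ for which all the required $h^0$-vanishings hold, and there is no SHGH-type theorem on $\bF_n$ to certify them uniformly; (ii) for $n\ge 4$ one has $K_X^2\le -2$, hence $h^1(-K_X)\neq 0$ and $\rheight(\cO_X,\cO_X(-K_X))=1$, so the pseudoheight argument of Lemma~\ref{L:pseudoheightofF2at9points} degrades to the one-sided bound of Proposition~\ref{proposition-HH-11-points}, and the surjectivity of $\HH^2(X)\to\HH^2(\cC_n)$ (equivalently $\NHH^3(\cE,X)=0$) is beyond the height/pseudoheight technology. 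In short: your write-up is an accurate research plan that mirrors the paper's own, not a proof, and it could not be one given the current state of knowledge; you are candid about this.

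Two concrete slips are worth fixing. First, a sign error: the involution is $D\mapsto -D+2(D.K_X/K_X^2)K_X$ as in \eqref{E:Picreflection}; the formula you wrote, $-D-2(D.K_X/K_X^2)K_X$, sends $K_X$ to $-3K_X$ and is not an involution, so it does not fix the canonical class (your integrality analysis is unaffected, since only the denominator $K_X^2$ matters). Second, your distinctness argument overreaches at the bottom of the range: for $n=0$ and $n=1$ the surfaces in question are \emph{both} isomorphic to $\Bl_{10}(\bP^2)$ for general points, so Lemma~\ref{L:h1XTX} gives $h^1(X,\cT_X)=12$ in both cases; the list of values $12,12,13,14,17,20,\ldots$ is therefore not pairwise distinct, and $\HH^2$ cannot separate $\cC_0$ from $\cC_1$. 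Any treatment of the final sentence of the conjecture must either exclude the pair $(n,m)=(0,1)$ or distinguish those two phantoms by a different invariant --- a subtlety present in the conjecture as stated, but one your proposal asserts away rather than addresses.
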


As stated above, a weakened version of Conjecture \ref{Conj:generaln} follows from Theorem \ref{T:BK1.1} when $n=0,1$. For $n\ge 2$, however, new techniques seem to be needed due to the behavior of the anticanonical linear system. We turn now to Conjecture \ref{Conj:blowupnphantom}; since $\Bl_2(\bP^2) \cong \Bl_1(\bF_0)$ and $\Bl_1(\bP^2) \cong \bF_1$, the results of Krah \cite{krah} imply the existence part of the conjecture for $n = 0,1$. Moreover, in the present work we have proven Conjecture \ref{Conj:blowupnphantom} for the case of $n=2$. Similar techniques to the ones used here could be used to check that $\bF_3$ and $\bF_4$ blown up in $9$ and $10$ general points respectively satisfy the conjecture, though we have not attempted this.

\begin{remark}
One can attempt to mimic the proof of Theorem \ref{theorem:phantom-f2-9-points} for $\mathbf{F}_4$ blown-up at $9$ general points. In this setting, we performed computations analogous to those of \cref{sec: app1}, which were used above to show that the collection of Theorem \ref{theorem:phantom-f2-9-points} is exceptional but not full. 

For these computations, we used an embedding of $\mathbf{F}_4$ in $\mathbf{P}^3$ and $9$ random points defined over $\mathbb{F}_{65537}$. The involution $\iota$ used was the negative of the reflection of the canonical divisor as before. We found $h^0(-G)=8$ where $G=\iota(F)$, for a general fiber $F$ of the projective bundle $\mathbf{F}_4/\mathbf{P}^1$, showing that the resulting collection on the blowup of $\bF_4$ is not exceptional over $\mathbb{F}_{65537}$. Although semicontinuity cannot be used to claim the same result for the blowup of $\mathbf{F}_4$ at $9$ general points over $\mathbb{C}$, the lack of vanishing provides some (albeit weak) evidence towards the validity of \cref{Conj:generaln}.
\end{remark}

For $n\ge 5$, different techniques are needed to resolve Conjectures \ref{Conj:generaln} and \ref{Conj:blowupnphantom}. Indeed, the reflections used in the proofs of Theorems~\ref{theorem:phantom-f2-9-points} and~\ref{theorem:phantom-11-points} or in Krah's example will not work for the case of general $n$. 
This is because, if $X$ is a surface obtained by blowing up any Hirzebruch surface in $d$ points, then $K_X^2 = 8 - d$. It follows that for $d\ge 11$ the reflection along $K_X$ is not defined over $\bZ$.

On the other hand, it seems plausible that the use of more general involutions of $\Pic(X)$ --- for instance, like the one used in the proof of Theorem~\ref{theorem:alternatereflection} --- could help resolve Conjecture~\ref{Conj:blowupnphantom} for $n\ge 5$.

\appendix

\section{Semicontinuity and reduction to finite fields}\label{sec: semic}

In this section, we explain how certain cohomology calculations performed over a finite field using Macaulay2 \cite{M2} can be used to deduce corresponding computations over $\mathbb{C}$. There are two main steps to this reduction: reducing from a general member of a family over $\mathbb{C}$ to a particular member, and reducing from $\mathbb{C}$ to $\mathbb{F}_p$ for a prime $p$.

Fix a prime $p$ and let $\mathscr{X}$ be a smooth and projective surface over the $p$-adic integers $\mathbb{Z}_p$. We write $X=\mathscr{X}\times_{\mathbb{Z}_p} \mathbb{Q}_p$ for the generic fiber and $\overline{X}=\mathscr{X}\times_{\mathbb{Z}_p}\mathbb{F}_p$ for the special fiber over $\mathbb{F}_p$. Note that, by choosing an embedding $\mathbb{Q}_p\subset \mathbb{C}$, we get a smooth and projective complex surface $X_{\mathbb{C}}$. In the cases we consider below, we will have either $\mathscr{X}=\mathbf{P}^2_{\mathbb{Z}_p}$ or $\mathscr{X}=\mathbf{F}_{n}/\mathbb{Z}_p$, the $n^{\rm{th}}$ Hirzebruch surface over $\mathbf{P}^1$ for some $n\geq 0$.

Let $\mathscr{I}\subset \mathscr{X}$ be the union of a finite number of disjoint $\mathbb{Z}_p$-sections of the structure morphism of $\mathscr{X}/\mathbb{Z}_p$. By blowing-up along $\mathscr{I}$, we get a smooth and projective surface $\mathscr{Y}=\mathrm{Bl}_{\mathscr{I}}(\mathscr{X})/\mathbb{Z}_p$ and, in a canonical way, we get corresponding schemes \[Y=\mathscr{Y}\times_{\mathbb{Z}_p}\mathbb{Q}_p,\quad \overline{Y}=\mathscr{Y}\times_{\mathbb{Z}_p}\mathbb{F}_p,\quad Y_\mathbb{C}=Y\times_{\mathbb{Q}_p}\mathbb{C}.\] Note that $Y$ (resp.\ $Y_{\mathbb{C}}$) is isomorphic to the blowup of $X$ at $\mathscr{I}\cap X$ (resp.\ the base change of this to $\mathbb{C}$) and $\overline{Y}$ is isomorphic to blowup of $\overline{X}$ at $\mathscr{I}\cap \overline{X}$.

\begin{proposition}
Let $\mathcal{L}$ be a line bundle on $\mathscr{Y}$ and suppose that there exists an integer $i\geq 0$ such that $h^i(\overline{Y},\mathcal{L}|_{\overline{Y}})=0$. Then $h^i(Y_\mathbb{C},\mathcal{L}|_{Y_{\mathbb{C}}})=0$.
\end{proposition}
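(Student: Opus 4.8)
The plan is to deduce the statement in two steps: an upper--semicontinuity argument for cohomology over the arithmetic base $\Spec\mathbb{Z}_p$, followed by flat base change along the field embedding $\mathbb{Q}_p\hookrightarrow\mathbb{C}$ fixed in the setup. Before invoking semicontinuity I would first record the hypotheses it requires. The structure morphism $f\colon\mathscr{Y}\to\Spec\mathbb{Z}_p$ is projective by construction and flat, since $\mathscr{Y}=\mathrm{Bl}_{\mathscr{I}}(\mathscr{X})$ is smooth over $\mathbb{Z}_p$ (it is the blowup of the smooth $\mathscr{X}/\mathbb{Z}_p$ along the disjoint smooth sections $\mathscr{I}$), and smooth morphisms are flat. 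As $\mathcal{L}$ is a line bundle it is flat over $\mathscr{Y}$, hence flat over $\mathbb{Z}_p$. Thus $f$ and $\mathcal{L}$ meet the hypotheses of the semicontinuity theorem \cite[Theorem III.12.8]{hartshorne}.

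The base $\Spec\mathbb{Z}_p$ has exactly two points: the closed point $s$, with residue field $\mathbb{F}_p$ and fiber $\overline{Y}$, and the generic point $\eta$, with residue field $\mathbb{Q}_p$ and fiber $Y$. Upper semicontinuity says that the superlevel sets $\{\,y : h^i(\mathscr{Y}_y,\mathcal{L}|_{\mathscr{Y}_y})\ge k\,\}$ are closed. Since $\eta$ specializes to $s$, i.e. $s\in\overline{\{\eta\}}$, any such closed set that contains $\eta$ also contains $s$; equivalently $h^i(\eta)\le h^i(s)$. The hypothesis $h^i(\overline{Y},\mathcal{L}|_{\overline{Y}})=h^i(s)=0$ therefore forces $h^i(Y,\mathcal{L}|_Y)=h^i(\eta)=0$.

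For the last step I would invoke that cohomology of a coherent sheaf on a proper scheme commutes with flat base change. Applied to the flat extension $\mathbb{Q}_p\hookrightarrow\mathbb{C}$, this yields a natural isomorphism $\mathrm{H}^i(Y_\mathbb{C},\mathcal{L}|_{Y_\mathbb{C}})\cong \mathrm{H}^i(Y,\mathcal{L}|_Y)\otimes_{\mathbb{Q}_p}\mathbb{C}$, so that $h^i(Y_\mathbb{C},\mathcal{L}|_{Y_\mathbb{C}})=h^i(Y,\mathcal{L}|_Y)=0$, completing the argument.

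The proof is essentially formal, and I do not anticipate a serious obstacle; the one point that genuinely requires care is the \emph{direction} of the semicontinuity. We are transporting a vanishing from the special fiber to the generic fiber, which is precisely the favorable direction for upper semicontinuity, since the generic value is bounded above by the special value. The reverse implication would instead require a constancy statement via cohomology and base change, with additional input on the higher direct images. I would also emphasize that this proposition compares only the fixed family $\mathscr{Y}/\mathbb{Z}_p$ with its complex fiber $Y_\mathbb{C}$; promoting such a vanishing to the blowup of $\mathscr{X}_\mathbb{C}$ at \emph{general} complex points is a separate matter, handled by the observation that $h^i=0$ is an open condition on the parameter space of point configurations, so that the single configuration witnessing the vanishing over $\mathbb{C}$ already forces the same vanishing for a general one.
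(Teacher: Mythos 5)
Your proof is correct and is exactly the argument the paper intends: the paper's entire proof is a citation of \cite[Chapter III, Theorem 12.8]{hartshorne}, applied over the two-point base $\Spec\mathbb{Z}_p$ precisely as you describe. Your write-up merely makes explicit the hypotheses (projectivity, flatness of $\mathscr{Y}$ and $\mathcal{L}$ over $\mathbb{Z}_p$) and the final flat base change along $\mathbb{Q}_p\hookrightarrow\mathbb{C}$, both of which the paper leaves implicit.
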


\begin{proof}
This is an application of \cite[Chapter III, Theorem 12.8]{hartshorne}.
\end{proof}

The complex surface $Y_{\mathbb{C}}$ naturally fits into a family obtained by varying the points of $X_{\mathbb{C}}$ which are blown-up to obtain $Y_{\mathbb{C}}$. To be precise, suppose that $\# \mathscr{I}\cap X=r$ and let $U\subset X_{\mathbb{C}}\times \cdots \times X_{\mathbb{C}}$ denote the open subscheme which is the complement of all partial diagonals. Let $S^0_r$ denote the open subscheme of the $r$th symmetric product $\mathrm{Sym}^r(X_{\mathbb{C}})$ obtained by taking the quotient of $U$ by the induced action of the symmetric group $\Sigma_r$. Let $I\subset X_{\mathbb{C}}\times S^0_r$ denote the incidence subscheme, i.e.\ the reduced subscheme whose $\mathbb{C}$-points have the form $(x,y)$ such that $x$ is contained in the equivalence class of $y$.

By blowing up $I\subset X_{\mathbb{C}}\times S^0_r$, we get a scheme $Z$ over $X_{\mathbb{C}}\times S^0_r$ whose composition with the second projection gives a family of surfaces over $S^0_r$. By construction, the fiber $Z_s$ over a point $s$ of $S^0_r$ is exactly the surface obtained by blowing up $X_{\mathbb{C}}$ in the $r$ distinct points of the equivalence class of the point $s$. Notably, somewhere among the fibers we will find $Y_\mathbb{C}$.

\begin{proposition}
Keep notation as above. Let $\mathcal{L}$ be a line bundle on $Z$, flat over $S^0_r$, and suppose that there exists an integer $i\geq 0$ such that $h^i(Y_\mathbb{C},\mathcal{L}|_{Y_{\mathbb{C}}})=0$. Then there exists an open and dense subset $W\subset S^0_r$ such that for any point $w\in W$, we have $h^i(Z_w,\mathcal{L}|_{Z_w})=0$.
\end{proposition}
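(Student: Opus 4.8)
The plan is to deduce the statement directly from the semicontinuity theorem for cohomology in a flat projective family, exactly as in the previous proposition, but now varying the point of the base $S^0_r$ rather than reducing modulo a prime. First I would record that the structure morphism $Z \to S^0_r$ is projective: it is the composition of the blowup $Z \to X_{\mathbb{C}} \times S^0_r$, which is projective as the blowup of a closed subscheme, with the projection $X_{\mathbb{C}} \times S^0_r \to S^0_r$, which is projective because $X_{\mathbb{C}}$ is. Since $\mathcal{L}$ is by hypothesis a line bundle on $Z$ that is flat over $S^0_r$, the hypotheses of \cite[Chapter III, Theorem 12.8]{hartshorne} are satisfied.

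By that theorem, for each fixed $i$ the function
\[
    s \longmapsto h^i(Z_s, \mathcal{L}|_{Z_s})
\]
is upper semicontinuous on $S^0_r$; that is, for every integer $m$ the locus $\{\, s \in S^0_r : h^i(Z_s, \mathcal{L}|_{Z_s}) \geq m \,\}$ is closed. Taking $m = 1$, the set
\[
    W \coloneqq \{\, s \in S^0_r : h^i(Z_s, \mathcal{L}|_{Z_s}) = 0 \,\}
\]
is the complement of a closed set, hence open. It is nonempty: by construction $Y_{\mathbb{C}}$ occurs as a fiber $Z_{s_0}$ over some point $s_0 \in S^0_r$ (the point recording the $r$ distinct blown-up points), and the hypothesis $h^i(Y_{\mathbb{C}}, \mathcal{L}|_{Y_{\mathbb{C}}}) = 0$ places $s_0$ in $W$.

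It remains to see that $W$ is dense, for which it suffices to check that $S^0_r$ is irreducible. Indeed, $X_{\mathbb{C}}$ is an irreducible variety, so $X_{\mathbb{C}}^{\,r}$ is irreducible, the open subscheme $U$ (the complement of the partial diagonals) is irreducible, and $S^0_r = U/\Sigma_r$ is irreducible as the image of an irreducible scheme under the finite quotient map. A nonempty open subset of an irreducible scheme is dense, so $W$ is dense, as claimed.

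I do not anticipate a genuine obstacle here: the statement is a formal consequence of semicontinuity once the family has been set up as in the discussion preceding it. The only two points requiring care are the verification that $(Z \to S^0_r, \mathcal{L})$ meets the projectivity and flatness hypotheses of the semicontinuity theorem, and the irreducibility of $S^0_r$, which is what upgrades ``nonempty open'' to ``dense''. Both are routine given the construction of $Z$ and $S^0_r$.
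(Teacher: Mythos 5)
Your proof is correct and takes essentially the same approach as the paper: the paper's entire proof of this proposition is the one-line statement that it is another application of the semicontinuity theorem \cite[Chapter III, Theorem 12.8]{hartshorne}. Your write-up merely makes explicit the details the paper leaves implicit --- projectivity of $Z \to S^0_r$, openness and nonemptiness of the locus $W$ via semicontinuity and the fact that $Y_{\mathbb{C}}$ appears among the fibers, and density via irreducibility of $S^0_r$.
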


\begin{proof}
This is another application of \cite[Chapter III, Theorem 12.8]{hartshorne}.
\end{proof}

\section{Computations with Macaulay2}\label{app: mac}
In this section we explain how one can use Macaulay2 \cite{M2} in order to deduce the vanishing of certain cohomology groups in the main text.

\subsection{Vanishing for Section \ref{sec: f2}}\label{sec: app1}

We keep the notation from Section \ref{sec: f2}, namely we write $\bF_2=V(x_0^2y_1-x_1^2y_0)\subset \bP^1\times \bP^2$ for the Hirzebruch surface whose first projection realizes an isomorphism with the projective bundle $\bP(\mathcal{O}_{\bP^1}\oplus \mathcal{O}_{\bP^1}(2))\to \bP^1$.

Composing the inclusion $\bF_2\subset \bP^1\times \bP^2$ with the Segre embedding $\bP^1\times \bP^2\subset \bP^5$ realizes $\bF_2$ as a quartic scroll in $\bP^5$ embedded by the complete linear system of $\mathcal{O}(C+3F)$, cf.\ \cite[Chapter V, Corollary 2.19]{hartshorne}. Let $z_0,z_1,z_2,z_3,z_4,z_5$ be homogeneous coordinates on $\bP^5$. Then:
\begin{align*}
    \bF_2 & = V(z_0z_4-z_1z_3,z_0z_5-                  z_2z_3,z_1z_5-z_2z_4,
             z_0z_1-z_3^2, 
             z_1^2-z_3z_4, z_1z_2-z_3z_5)\\
C & = V(z_0,z_1,z_3,z_4)\cap \bF_2\\
F & = V(z_1,z_3,z_4,z_5)\cap \bF_2.
\end{align*}

Many of the computations that we want to perform, i.e.\ checking the vanishing of a linear system of a certain line bundle on the blowup $X=\mathrm{Bl}_P(\mathbf{F}_2)$ of $\mathbf{F}_2$ over $\mathbb{C}$ at 9 general points $P=p_1\cup \cdots \cup p_9\subset \mathbf{F}_2$, can be reduced to computations over a finite field, as detailed in Appendix \ref{sec: semic}. We explain here how just one of these computations can be performed: the computation that $\operatorname{rank}\mathrm{H}^0(X,\mathcal{O}_X(-G))=0$.

By semi-continuity (see \cref{sec: semic} for details), it suffices to verify vanishing in the analogous case of the blowup of $\mathbf{F}_2$ over \textit{any finite field} at \textit{any $9$ specific points}. We were able to perform the necessary computation over the ground field $\mathbb{F}_{997}$. The following input was used to initialize the above set-up in Macaulay2 over this field.

\begin{lstlisting}
Fp=ZZ/997;
R=Fp[x_0, x_1, x_2, x_3, x_4, x_5];
I=ideal(x_0*x_4-x_1*x_3, x_0*x_5-x_2*x_3, x_1*x_5-x_2*x_4, x_0*x_1-x_3^2,   x_1^2-x_3*x_4, x_1*x_2-x_3*x_5);
S=R/I;
f=ideal(x_1, x_3, x_4, x_5);
\end{lstlisting}

We randomly selected 9 points on $\mathbf{F}_2$ over this field. The following homogeneous ideals represent the points which were used in our computations.

\begin{lstlisting}[firstnumber = 11]
p1=ideal(x_0-83*x_5, x_1-431*x_5, x_2-644*x_5, x_3-398*x_5, x_4-349*x_5);
p2=ideal(x_0-752*x_5, x_1-134*x_5, x_2-593*x_5, x_3-699*x_5, x_4-281*x_5);
p3=ideal(x_0-810*x_5, x_1-789*x_5, x_2-122*x_5, x_3-546*x_5, x_4-554*x_5);
p4=ideal(x_0-676*x_5, x_1-4*x_5, x_2-984*x_5, x_3-945*x_5, x_4-920*x_5);  
p5=ideal(x_0-20*x_5, x_1-723*x_5, x_2-41*x_5, x_3-730*x_5, x_4-966*x_5);  
p6=ideal(x_0-105*x_5, x_1-530*x_5, x_2-698*x_5, x_3-53*x_5, x_4-315*x_5); 
p7=ideal(x_0-952*x_5, x_1-545*x_5, x_2-750*x_5, x_3-977*x_5, x_4-353*x_5);
p8=ideal(x_0-262*x_5, x_1-661*x_5, x_2-470*x_5, x_3-603*x_5, x_4-589*x_5);
p9=ideal(x_0-557*x_5, x_1-465*x_5, x_2-786*x_5, x_3-588*x_5, x_4-149*x_5);
\end{lstlisting}

In order to compute the linear system of a given divisor, e.g.\ for the divisor $-G$ as in the proof of Theorem \ref{theorem:phantom-f2-9-points}, we let $\mathcal{I}_4$ be the ideal sheaf supported on $p_1\cup\cdots\cup p_9$ in $\mathbf{F}_2$ with multiplicity $4$ at each point. Since there is an equality of divisors 
\[
    8C+17F= -7F + 8(C+3F),
\] 
we can compute the dimension of the linear system $-G$ on $X$ by computing the right-hand-side of the equality below \[\operatorname{rank} \mathrm{H}^0(X,\mathcal{O}_X(-G))=\operatorname{rank} \mathrm{H}^0(\mathbf{F}_2,\mathcal{O}_{\mathbf{F}_2}(-7F)\otimes \mathcal{O}_{\mathbf{F}_2}(8)\otimes \mathcal{I}_4).\] The latter we can calculate by evaluating the Hilbert series of the appropriate graded module, truncating the terms of degree larger than $9$. This can be done with the input below.

\begin{lstlisting}[firstnumber = 20]
Pmult4=intersect(p1^4, p2^4, p3^4, p4^4, p5^4, p6^4, p7^4, p8^4, p9^4);
F7=f^7;
M=Pmult4*F7;
satM=saturate M;
modsatM=module(satM);
hilbertSeries(modsatM, Order => 9)
\end{lstlisting}

This results in the output:
\begin{lstlisting}[firstnumber = 26]
0
ZZ[T]
\end{lstlisting}

The coefficient of this Hilbert series for the term $T^8$ is the $\mathbb{F}_{997}$-dimension of the component of homogeneous degree $8$ of the graded module \texttt{modsatM}. Since the output was the zero polynomial, we obtain the desired vanishing. We note that saturation of the ideal \texttt{M} to \texttt{satM} is necessary for this computation to give the correct result.

Saturation was, by far, the most computationally intensive part of these computations. The computations necessary for \cref{table: f2} were all, eventually, completed on a desktop machine with an AMD Ryzen Threadripper 7960X 24-Core CPU and 64G of RAM. Multiple strategies for computing the saturation were used. The most successful strategy involved computing the saturation of an ideal with respect to each variable with Macaulay2's built-in Eliminate strategy. The following is an example of implementing this strategy to compute \texttt{satM} above.

\begin{lstlisting}[firstnumber = 23]
y_0=lift(x_0, R);
y_1=lift(x_1, R);
y_2=lift(x_2, R);
y_3=lift(x_3, R);
y_4=lift(x_4, R);
y_5=lift(x_5, R);
J=lift(M,R);

M0=saturate(J, y_0, Strategy => Eliminate);
M1=saturate(J, y_1, Strategy => Eliminate);
M2=saturate(J, y_2, Strategy => Eliminate);
M3=saturate(J, y_3, Strategy => Eliminate);
M4=saturate(J, y_4, Strategy => Eliminate);
M5=saturate(J, y_5, Strategy => Eliminate);

satM=intersect(M0,M1,M2,M3,M4,M5)*S;
\end{lstlisting}

Using the Eliminate strategy in this way allowed for each of the computations of Section \ref{sec: f2} to be accomplished in under a day. This often outperformed even parallel strategies, e.g.\ using the msolveSaturate function from the msolve interface in Macaulay2 on 17 threads.

The computations necessary for \cref{table:F2NHH} were achieved on multiple machines. The longest and most difficult computation, using the above instructions, was computing the vanishing of the space $\mathrm{H}^0(X,\mathcal{O}_X(-K_X-S-3G))$. This computation was eventually completed on a machine with a single CPU core and access to 120G of RAM running for 44 hours.

\subsection{Vanishing for Section \ref{sec: Bl_10(P2)}}

We now turn to justifying vanishing for the blowup $X=\mathrm{Bl}_P(\bP^2)$ of $\bP^2$ over $\mathbb{C}$ in $10$ general points $P=p_1\cup \dots \cup p_{10}$, keeping notation from Section \ref{sec: Bl_10(P2)}.
We will provide code to initialize the computations.
\begin{lstlisting}
Fp=ZZ/997;
R=Fp[x_0, x_1, x_2];
p1=ideal(x_0-81*x_1, x_0-491*x_2);
p2=ideal(x_0-752*x_1, x_0-134*x_2);
p3=ideal(x_0-810*x_1, x_0-789*x_2);
p4=ideal(x_0-676*x_1, x_0-4*x_2);
p5=ideal(x_0-20*x_1, x_0-723*x_2);
p6=ideal(x_0-105*x_1, x_0-530*x_2);
p7=ideal(x_0-902*x_1, x_0-545*x_2);
p8=ideal(x_0-212*x_1, x_0-661*x_2);
p9=ideal(x_0-557*x_1, x_0-465*x_2);
p10=ideal(x_0-234*x_1, x_0-632*x_2);

PMult=(m1,m2,m3,m4,m5,m6,m7,m8,m9,m10) -> module(intersect(p1^m1, p2^m2,     p3^m3, p4^m4, p5^m5, p6^m6, p7^m7, p8^m8, p9^m9, p10^m10));
\end{lstlisting}
The function \texttt{PMult} takes as an input a 10-tuple of nonnegative numbers $m_i$ and outputs the graded ideal of functions that vanish at each of the points $p_i$ with multiplicity $m_i$.
Then, running
\begin{lstlisting}[numbers=none]
    hilbertSeries(PMult(m1,m2,m3,m4,m5,m6,m7,m8,m9,m10), Order => d)
\end{lstlisting}
produces the Hilbert series of this graded ideal up to degree $d-1$.
In other words, it outputs the $\mathbb{F}_{997}$-dimensions \[\operatorname{rank} \mathrm{H}^0(X,\cO_X(nH+\sum m_iE_i)) = \operatorname{rank} \mathrm{H}^0(\bP^2,\cI_{p_1}^{m_1}\cdots\cI_{p_{10}}^{m_{10}}(n))\] for all integers $n<d$, where $\cI_{p_i}$ is the ideal sheaf of the point $p_i$ and $H$ a divisor in the class of the pullback of line on $\bP^2$.

For example, we can calculate
\begin{lstlisting}[firstnumber = 15]
    hilbertSeries(PMult(9,8, 3,4,4,4,4, 6,6,6), Order => 19)
\end{lstlisting}
which gives the following output:
\begin{lstlisting}[firstnumber = 16]
    0
    ZZ[T]
\end{lstlisting}
This means that the truncated Hilbert series to degree at most 18 is the zero polynomial, and in particular, $\mathrm H^0(X,\cO_X(18H-9E_1-8E_2-3E_3-4(E_4+\dots+E_7)-6(E_8+E_9+E_{10})) = 0$.

Unlike in \cref{sec: app1}, saturation is automatic for the ideals considered in this section. Indeed, a homogeneous ideal $I$ of a graded polynomial ring $R=k[x_0,\ldots,x_n]$ over a field $k$ is saturated if and only if the irrelevant ideal $R_+$ is not an associated prime for $R/I$, i.e.\ if there is no element $f\in R$ with $fR_+\subset I$. If $J_1,\ldots,J_9\subset R$ are the ideals corresponding to the projective points $p_1,\ldots,p_9\in \bP^2$, then the only associated prime of a power $J_i^{a_i}$ with $a_i>0$ is simply $J_i$. The associated primes of a quotient $R/(J_1^{a_1}\cap \cdots \cap J_9^{a_9})$ with $a_1,\ldots,a_9\geq 0$ are thus a subset of $\{J_1,\ldots,J_9\}$ by \cite[\href{https://stacks.math.columbia.edu/tag/02M3}{Tag 02M3}]{stacks-project}, whence the claim.

\newpage
\section{Tables}
\label{section-tables}

\begin{table}[ht]
\centering
\caption{Vanishing of linear systems on the blowup of $\bP^2$ at 10 generic points.}\label{table: Bl_10(P2)}
\makebox[\textwidth][c]{
\begin{tabular}{|c|c|c|c|}
\hline
Divisor & $(i,j)$ & Expression in terms of $(H,E_i)$ & Reason for vanishing\\ 
\hline
\hline
$-F$ & & $79H-36E'_1-18E'_3-26E'_8$ & Macaulay2\\
\hline
$F+K_X$ & & & degree\\
\hline
$-2F+D_i$ & $i=8,9,10$ & $132H-60E'_1-30E'_3-44E'_8+E_i$ & Macaulay2 \\
\hline
$2F-D_i+K_X$ & $i=8,9,10$ & &degree \\
\hline
$-2F+D_i$ &$i=3,\dots,7$ & $140H-64E'_1-32E'_3+E_i-46E'_8$ & Macaulay2 \\
\hline
$2F-D_i+K_X$ & $i=3,\dots,7$ & &degree \\
\hline
$-2F+D_i$ & $i=1,2$ & $122H-56E'_1+E_i-28E'_3-40E'_8$ & Macaulay2 \\
\hline $2F-D_i+K_X$ & $i=1,2$ & &degree \\
\hline
$-2F$ & & $158H-72E'_1-36E'_3-52E'_8$ & Macaulay2\\
\hline
$2F+K_X$& & & degree\\
\hline $-F+D_i$& $i=8,9,10$ & $53H-24E'_1-12E'_3-18E'_8+E_i$ & Macaulay2 \\
\hline
     $F-D_i+K_X$&
     $i=8,9,10$& &degree \\
\hline
     $-F+D_i$&
     $i=3,\dots,7$& $61H-28E'_1-14E'_3+E_i-20E'_8$ & Macaulay2 \\
\hline 
$F-D_i+K_X$& $i=3,\dots,7$ & &degree \\
\hline
$-F+D_i$&
     $i=1,2$& $43H-20E'_1+E_i-10E'_3-14E'_8$ & Macaulay2 \\
\hline
     $F-D_i+K_X$&
     $i=1,2$
    & &degree \\
\hline
     $D_j-D_i+K_X$&
     $i<j\in\{8,9,10\}$& & degree\\
\hline
     $D_i-D_j$&
     $i=3,\dots,7$, $j=8,9,10$
& $8H-4E'_1-2E'_3+E_i-2E'_8-E_j$ & Macaulay2\\
\hline
     $D_j-D_i+K_X$&
     $i=3,\dots,7$, $j=8,9,10$& & degree\\
\hline
$D_i-D_j$&
     $i=1,2$, $j=8,9,10$& $-10H+5E'_1+E_i+2E'_3+4E'_8-E_j$ & degree\\
\hline
$
     D_j-D_i+K_X$&
     $i=1,2$,  $j=8,9,10$ & $7H-4E'_1-E_i-E'_3-3E'_8+E_j$ & Macaulay2 \\
\hline
$
     -D_i$&
     $i=8,9,10$ & $26H-12E'_1-6E'_3-8E'_8-E_i$ & Macaulay2\\
\hline
$D_i+K_X$&
     $i=8,9,10$ & & degree\\
\hline
     $D_j-D_i+K_X$&
     $i<j\in\{3,\dots,7\}$ & & degree\\
\hline
$D_i-D_j$&
     $i=1,2$, $j=3,\dots,7$ & $-18H+8E'_1+E_i+4E'_3-E_j+6E'_8$ & degree\\
\hline
$D_j-D_i+K_X$&
     $i=1,2$, $j=3,\dots,7$& $15H-7E'_1-E_i-3E'_3+E_j-5E'_8$ & Macaulay2 \\
\hline
$ -D_i$&
     $i=3,\dots,7$& $18H-8E'_1-4E'_3-E_i-6E'_8$ & Macaulay2\\
\hline
$D_i+K_X$&
     $i=3,\dots,7$& & degree\\
\hline
$D_2-D_1+K_X$ & & & degree\\
\hline
$-D_i$&
     $i=1,2$
     & $36H-16E'_1-E_i-8E'_3-12E'_8$ & Macaulay2\\
\hline
$D_i+K_X$&
     $i=1,2$
& & degree\\
\hline
$D_i$&
     $i=1,\dots,10$& & degree\\
\hline
$F$ & & & degree \\
\hline
$2F$ & & &  degree \\
\hline
$D_j-D_i$&
     $i=1,2$, $j=3,\dots,7$& $18H-8E'_1-E_i-4E'_3+E_j-6E'_8$ & Macaulay2 \\
\hline
     $F-D_i$&
     $i=1,\dots,10$&  & degree \\
\hline
     $2F-D_i$&
     $i=1,\dots,10$&  & degree \\
\hline
$D_j-D_i$& $i=3,\dots,7$, $j=8,9,10$
&  & degree \\
\hline
\end{tabular}
}
\end{table}
\newpage

\begin{table}[h]
\centering
\caption{Vanishing of linear systems on the blowup of $\bF_2$ at $9$ generic points.}\label{table: f2}
\begin{tabular}{|c|c|c|}
\hline
Divisor in $(S,G,D_i)$ & Divisor in $(C,F,E_i)$ & Reason for vanishing\\ 
\hline
\hline
$-G$ & $8C+17F-4\sum_{i=1}^9 E_i$ & Macaulay2\\
\hline
$K_X+G$& & degree\\
\hline
$-S-2G$ & $17C+34F-8\sum_{i=1}^9 E_i$ & Macaulay2\\
\hline
$K_X+S+2G$ & & degree\\
\hline
$D_j-S-3G$& $21C+43F-10\sum_{i=1}^9 E_i - E_j$
& Macaulay2 \\
\hline
$K_X-D_i+S+3G$& &degree \\
\hline
$-S-3G$& $25C+51F-12\sum_{i=1}^9 E_i$ & Macaulay2 \\
\hline
$K_X+S+3G$& & degree\\
\hline
$-S-G$& $9C+17F-4\sum_{i=1}^9 E_i$ & Macaulay2\\
\hline
$K_X+S+G$& & degree\\
\hline
$D_j-S-2G$& $13C+26F-6\sum_{i=1}^9 -E_j$ & Macaulay2\\
\hline
$K_X-D_i+S+2G$& & degree\\
\hline
$D_j-G$& $4C+9F-2\sum_{i=1}^9 E_i-E_j$ & Macaulay2\\
\hline
$K_X-D_i+G$& & degree\\
\hline
$D_j-D_i$& $E_i-E_j$ & exceptional \\
\hline
$K_X-D_j+D_i$& & degree\\
\hline
$-D_j$& $4C+8F-2\sum_{i=1}^9 E_i+E_j$ & Macaulay2\\
\hline
$K_X+D_i$& & degree\\
\hline
$D_i$ & & degree\\
\hline
$G$ & & degree\\
\hline
$S+2G$& & degree\\
\hline
$S+3G$ & & degree\\
\hline
$G-D_i$& & degree\\
\hline
$S+2G-D_i$& & degree \\
\hline
$S+3G-D_i$& & degree \\
\hline
$S+G$ &  & degree \\
\hline
$G$ & & degree \\
\hline
\end{tabular}
\end{table}
\newpage
\begin{table}[h]
\centering
\caption{Cohomology of line bundles on the blowup of $\bF_2$ at $9$ generic points, used for computing normal Hochschild cohomology.}\label{table:F2NHH}
\begin{tabular}{|c|c|c|c|c|}
\hline
Divisor & $\chi$ & $h^0$& $h^1$& $h^2$\\ 
\hline
\hline
$D_i$ & 1 &0 &0 & 1\\
\hline
$G$ &2 & 0 & 0 & 2 \\ 
\hline
$G-D_i$ & 1 & 0 & 0  & 1\\
\hline
$S+G$ & 2 & 0 & 0 & 2\\
\hline
$S+2G$ & 4 & 0 & 0 & 4\\
\hline
$S+3G$ & 6 & 0 & 0 & 6 \\
\hline
$S+2G-D_i$ & 3 & 0 & 0 & 3\\
\hline
$S+3G-D_i$ & 5 & 0 & 0 & 5 \\
\hline
$-K_X$ & 0 & 0 & 0 & 0 \\
\hline
$ -K_X - D_i$ & $-2$ & 0 & 2 & 0 \\
\hline
$-K_X-G$ & $-3$ & 0 & 3 & 0 \\
\hline
$-K_X-S-2G$ & $-5$ & 0 & 5 & 0 \\
\hline
$-K_X-S-3G$ & $-7$ & 0 & 7 &0 \\
\hline
\(-K_X-G+D_i\) & $-2$ & 0 & 2 & 0
\\ \hline
\(-K_X-S-2G+D_i\) & \(-4\) & 0 & 4 & 0
\\ \hline
\(-K_X-S-3G+D_i\) & \(-6\) & 0 & 6 & 0
\\ \hline
\(-K_X-S-G\) & \(-3\) & 0 & 3 & 0
\\ \hline
\end{tabular}
\end{table}

\newpage

\bibliographystyle{amsalpha}
\bibliography{bib}

@article{mattoo,
	author = {Mattoo, Amal},
	journal = {arXiv:2510.26107},
	title = {Objects of a phantom on a rational surface},
	year = {2025}}

@article{MXY-new-phantom,
	author = {Shihao Ma and Yirui Xiong and Song Yang},
	journal = {arXiv:2511.07114},
	title = {A new phantom on a rational surface},
	year = {2025}}

@article {IHC-CY2,
    AUTHOR = {Perry, Alexander},
     TITLE = {The integral {H}odge conjecture for two-dimensional
              {C}alabi-{Y}au categories},
   JOURNAL = {Compos. Math.},
  FJOURNAL = {Compositio Mathematica},
    VOLUME = {158},
      YEAR = {2022},
    NUMBER = {2},
     PAGES = {287--333},
}

@article{kuznetsov-HH,
	author = {Kuznetsov, Alexander},
	journal = {arXiv:0904.4330},
	title = {Hochschild homology and semiorthogonal decompositions},
	year = {2009}}

@article {kuznetsov-heights,
    AUTHOR = {Kuznetsov, Alexander},
     TITLE = {Height of exceptional collections and {H}ochschild cohomology
              of quasiphantom categories},
   JOURNAL = {J. Reine Angew. Math.},
  FJOURNAL = {Journal f\"ur die Reine und Angewandte Mathematik. [Crelle's
              Journal]},
    VOLUME = {708},
      YEAR = {2015},
     PAGES = {213--243},
}

@article{efimov,
    Author = {Efimov, Alexander}, 
    Title = {Wall finiteness obstruction for {DG} categories and for algebras over colored {DG} operads}, 
    Journal = {talk slides},
    Year = {2020}, 
}

@inproceedings {kuznetsov-sod-icm,
    AUTHOR = {Kuznetsov, Alexander},
     TITLE = {Semiorthogonal decompositions in algebraic geometry},
 BOOKTITLE = {Proceedings of the {I}nternational {C}ongress of
              {M}athematicians---{S}eoul 2014. {V}ol. {II}},
     PAGES = {635--660},
 PUBLISHER = {Kyung Moon Sa, Seoul},
      YEAR = {2014},
}

@article {phantoms-pirozhkov,
    AUTHOR = {Pirozhkov, Dmitrii},
     TITLE = {Admissible subcategories of del {P}ezzo surfaces},
   JOURNAL = {Adv. Math.},
  FJOURNAL = {Advances in Mathematics},
    VOLUME = {424},
      YEAR = {2023},
     PAGES = {Paper No. 109046, 62},
}

@article {phantoms-orlov,
    AUTHOR = {Gorchinskiy, Sergey and Orlov, Dmitri},
     TITLE = {Geometric phantom categories},
   JOURNAL = {Publ. Math. Inst. Hautes \'Etudes Sci.},
  FJOURNAL = {Publications Math\'ematiques. Institut de Hautes \'Etudes
              Scientifiques},
    VOLUME = {117},
      YEAR = {2013},
     PAGES = {329--349},
}

@article {phantoms-bohning,
    AUTHOR = {B\"ohning, Christian and Graf von Bothmer, Hans-Christian and
              Katzarkov, Ludmil and Sosna, Pawel},
     TITLE = {Determinantal {B}arlow surfaces and phantom categories},
   JOURNAL = {J. Eur. Math. Soc. (JEMS)},
  FJOURNAL = {Journal of the European Mathematical Society (JEMS)},
    VOLUME = {17},
      YEAR = {2015},
    NUMBER = {7},
     PAGES = {1569--1592},
}

@article {krah,
    AUTHOR = {Krah, Johannes},
     TITLE = {A phantom on a rational surface},
   JOURNAL = {Invent. Math.},
  FJOURNAL = {Inventiones Mathematicae},
    VOLUME = {235},
      YEAR = {2024},
    NUMBER = {3},
     PAGES = {1009--1018},
}

@article {MR2325918,
    AUTHOR = {Dumnicki, Marcin and Jarnicki, Witold},
     TITLE = {New effective bounds on the dimension of a linear system in
              {$\mathbf{P}^2$}},
   JOURNAL = {J. Symbolic Comput.},
  FJOURNAL = {Journal of Symbolic Computation},
    VOLUME = {42},
      YEAR = {2007},
    NUMBER = {6},
     PAGES = {621--635},
      ISSN = {0747-7171,1095-855X},
   MRCLASS = {14C20 (13P10)},
  MRNUMBER = {2325918},
MRREVIEWER = {Joaquim\ Ro\'e},
       DOI = {10.1016/j.jsc.2007.01.004},
       URL = {https://doi.org/10.1016/j.jsc.2007.01.004},
}

@article {MR1208153,
    AUTHOR = {Orlov, Dmitri},
     TITLE = {Projective bundles, monoidal transformations, and derived
              categories of coherent sheaves},
   JOURNAL = {Izv. Ross. Akad. Nauk Ser. Mat.},
  FJOURNAL = {Izvestiya Rossiiskoi Akademii Nauk. Seriya Matematicheskaya},
    VOLUME = {56},
      YEAR = {1992},
    NUMBER = {4},
     PAGES = {852--862},
      ISSN = {1607-0046,2587-5906},
   MRCLASS = {14F05 (18E30 18F20)},
  MRNUMBER = {1208153},
MRREVIEWER = {Krzysztof\ Jaczewski},
       DOI = {10.1070/IM1993v041n01ABEH002182},
       URL = {https://doi.org/10.1070/IM1993v041n01ABEH002182},
}

@Misc{M2,
    author = {Grayson, Daniel R. and Stillman, Michael E.},
    title = {Macaulay2, a software system for research in algebraic geometry},
    howpublished = {available at \url{http://www2.macaulay2.com}}
}

@book{hartshorne,
    AUTHOR = {Hartshorne, Robin},
     TITLE = {Algebraic geometry},
    SERIES = {Graduate Texts in Mathematics},
    VOLUME = {No. 52},
 PUBLISHER = {Springer-Verlag, New York-Heidelberg},
      YEAR = {1977},
     PAGES = {xvi+496},
      ISBN = {0-387-90244-9},
   MRCLASS = {14-01},
MRREVIEWER = {Robert\ Speiser},
}

@book{HuybrechtsFM,
    author = {Huybrechts, Daniel},
    title = {Fourier-{M}ukai Transforms in Algebraic Geometry},
    publisher = {Oxford University Press},
    year = {2006},
    month = {04},
    abstract = {This book provides a systematic exposition of the theory of Fourier-Mukai transforms from an algebro-geometric point of view. Assuming a basic knowledge of algebraic geometry, the key aspect of this book is the derived category of coherent sheaves on a smooth projective variety. The derived category is a subtle invariant of the isomorphism type of a variety, and its group of autoequivalences often shows a rich structure. As it turns out — and this feature is pursued throughout the book — the behaviour of the derived category is determined by the geometric properties of the canonical bundle of the variety. Including notions from other areas, e.g., singular cohomology, Hodge theory, abelian varieties, K3 surfaces; full proofs and exercises are provided. The final chapter summarizes recent research directions, such as connections to orbifolds and the representation theory of finite groups via the McKay correspondence, stability conditions on triangulated categories, and the notion of the derived category of sheaves twisted by a gerbe.},
    isbn = {9780199296866},
    doi = {10.1093/acprof:oso/9780199296866.001.0001},
    url = {https://doi.org/10.1093/acprof:oso/9780199296866.001.0001},
}

@article {BorisovKimoi,
    AUTHOR = {Borisov, Lev and Kemboi, Kimoi},
     TITLE = {Non-existence of phantoms on some non-generic blowups of the
              projective plane},
   JOURNAL = {Proc. Amer. Math. Soc.},
  FJOURNAL = {Proceedings of the American Mathematical Society},
    VOLUME = {153},
      YEAR = {2025},
    NUMBER = {3},
     PAGES = {963--968},
      ISSN = {0002-9939,1088-6826},
   MRCLASS = {14F08},
       DOI = {10.1090/proc/17105},
       URL = {https://doi.org/10.1090/proc/17105},
}

@article{Manettilectures,
author = {Manetti, Marco},
year = {2004},
month = {01},
pages = {},
title = {Lectures on deformations of complex manifolds. Deformations from differential graded viewpoint},
volume = {24},
journal = {Rendiconti di Matematica e delle sue Applicazioni. Serie VII}
}

@misc{stacks-project,
  author       = {The {Stacks project authors}},
  title        = {The Stacks project},
  howpublished = {\url{https://stacks.math.columbia.edu}},
  year         = {2025},
}

@article {Kuznetsov-base-change,
    AUTHOR = {Kuznetsov, Alexander},
     TITLE = {Base change for semiorthogonal decompositions},
   JOURNAL = {Compos. Math.},
  FJOURNAL = {Compositio Mathematica},
    VOLUME = {147},
      YEAR = {2011},
    NUMBER = {3},
     PAGES = {852--876},
      ISSN = {0010-437X,1570-5846},
   MRCLASS = {14F05 (14A22 18E30)},
  MRNUMBER = {2801403},
MRREVIEWER = {Daniele\ Faenzi},
       DOI = {10.1112/S0010437X10005166},
       URL = {https://doi.org/10.1112/S0010437X10005166},
}

@phdthesis{krah2024phantoms,
  title={Phantoms and Exceptional Collections on Rational Surfaces},
  author={Krah, Johannes},
  year={2024},
  school={Dissertation, Bielefeld, Universit{\"a}t Bielefeld, 2024}
}
\end{document}